%% file: main.tex
\documentclass[12pt]{amsart}
\usepackage{graphicx} 

\usepackage[T1]{fontenc}
\usepackage[utf8]{inputenc}
\usepackage[english]{babel}
\usepackage{amsmath,amsthm,amssymb,latexsym,amsfonts, mathtools}
\usepackage{mathrsfs} 
\usepackage{tikz-cd}
\usepackage{framed}
\usepackage{listings}
\usepackage{shuffle}
\usepackage{faktor} 
\usepackage{nicefrac}
\usepackage{color,soul}
\usepackage{setspace}
\usepackage{graphicx}
\usepackage{faktor}
\usepackage[a4paper, margin=1in]{geometry}
\usepackage[alphabetic]{amsrefs}
\usepackage[colorlinks,linkcolor=black,anchorcolor=blue,citecolor=blue,hidelinks]{hyperref}

\usepackage{xpatch}
\makeatletter   
\xpatchcmd{\@tocline}
{\hfil\hbox to\@pnumwidth{\@tocpagenum{#7}}\par}
{\ifnum#1<0\hfill\else\dotfill\fi\hbox to\@pnumwidth{\@tocpagenum{#7}}\par}
{}{}
\makeatother

\usepackage{etoolbox}
\makeatletter
\patchcmd\@thm
  {\let\thm@indent\indent}{\let\thm@indent\noindent}%
  {}{}
\makeatother

\makeatletter
\patchcmd{\@maketitle}
  {\ifx\@empty\@dedicatory}
  {\ifx\@empty\@date \else {\vskip3ex \centering\footnotesize\@date\par\vskip1ex}\fi
   \ifx\@empty\@dedicatory}
  {}{}
\patchcmd{\@adminfootnotes}
  {\ifx\@empty\@date\else \@footnotetext{\@setdate}\fi}
  {}{}{}
\makeatother

\makeatletter
\patchcmd{\@thm}{\thm@headfont{\scshape}}{\thm@headfont{\scshape\normalfont\bfseries}}{}{}
\patchcmd{\@thm}{\thm@notefont{\fontseries\mddefault\upshape}}{}{}{}
\makeatother
\usepackage{subfiles}

\DeclareFontFamily{U}{wncy}{}
\DeclareFontShape{U}{wncy}{m}{n}{<->wncyr10}{}
\DeclareSymbolFont{mcy}{U}{wncy}{m}{n}
\DeclareMathSymbol{\sh}{\mathord}{mcy}{"78}

\renewcommand{\epsilon}{\varepsilon}
\renewcommand{\theta}{\vartheta}
\renewcommand{\rho}{\varrho}
\let\temp\phi
\let\phi\varphi
\let\varphi\temp
\newcommand{\insubfile}[1]{\ifx\@onlypreamble\@notprerr\else#1\fi}

\newcommand{\N}{\mathbb{N}}
\newcommand{\Z}{\mathbb{Z}}
\newcommand{\Q}{\mathbb{Q}}
\newcommand{\R}{\mathbb{R}}
\newcommand{\C}{\mathbb{C}}

\newcommand{\modulo}[1]{\;(\textnormal{mod}\; #1)}

\theoremstyle{definition}
\newtheorem{definition}{Definition}
\theoremstyle{plain}
\newtheorem{proposition}[definition]{Proposition}
\theoremstyle{plain}
\newtheorem{theorem}[definition]{Theorem}
\theoremstyle{plain}
\newtheorem{lemma}[definition]{Lemma}
\theoremstyle{plain}
\newtheorem{corollary}[definition]{Corollary}
\theoremstyle{plain}

\theoremstyle{definition}
\newtheorem{lemma*}{Lemma}
\theoremstyle{definition}
\newtheorem{remark}[definition]{Remark}
\theoremstyle{definition}

\AtBeginDocument{%
   \def\MR#1{}
}

\title{Irrationality of values of $L$-functions for real Dirichlet characters}
\author{Veekesh Kumar, Riccardo Tosi}
\address[Veekesh Kumar]{Department of Mathematics, Indian Institute of Technology Dharwad, Chikkamalligawad village, Dharwad, Karnataka - 580011, India.}

\email[]{veekeshk@iitdh.ac.in}
\address[Riccardo Tosi]{Mathematisches Institut, Universität Freiburg, Ernst-Zermelo-Str.~1, 79104, Freiburg im Breisgau, Germany.}

\email[]{riccardo.tosi@math.uni-freiburg.de}

\begin{document}

    \begin{abstract}
        \subfile{0_Abstract}
    \end{abstract}
    \noindent

    \maketitle

    \noindent
    2020 \textit{Mathematics Subject Classification.} Primary  	11J72, Secondary 11M06, 11M35. \\
    \textit{Keywords:} Irrationality; special values of Dirichlet $L$-functions, hypergeometric series.
    
    \tableofcontents

    \addtocontents{toc}{\protect\setcounter{tocdepth}{1}}
    
    \section*{Introduction}

\subfile{1_Introduction}

    \section{Notation}
        \subfile{2_Notation}

    \section{Linear forms}
        \subfile{3_Linear_forms}
    \section{Denominators}
        \subfile{4_Denominators}

    \section{Analytic estimates}

\subfile{5_Analytic_estimates}
    \section{Proof of the main result}
        \subfile{6_Proof_of_the_main_result}

    \bibliographystyle{alpha}
    \bibliography{bibliography.bib}
\end{document}

%% file: 0_Abstract.tex
    Let $\chi$ be a non-trivial real Dirichlet character and $s$ be a sufficiently large integer. In this paper, we prove that among the first $s$ non-critical special values of the Dirichlet $L$-function associated with $\chi$ at least $c\sqrt{\frac{s}{\log s}}$ are irrational, where $c$ is a constant independent of $s$. This is a substantial improvement on work of Fischler when $4$ divides the conductor of $\chi$. Otherwise, our constant improves on work of Mistiaen. \\
    The proof adapts some techniques of Lai and Yu for odd zeta values to the case of $L$-functions. The lack of positivity arguments is overcome by applying an irrationality criterion by Lai and Sprang originating from the $p$-adic setting. 

%% file: 1_Introduction.tex
    For a real Dirichlet character $\chi$ of conductor $T\ge 2$, consider its associated $L$-function
    \[
        L(z,\chi)=\sum_{n=1}^\infty \frac{\chi(n)}{n^z}, \qquad \text{Re}(z)\ge 1.
    \]
    Define $\sigma(\chi)\in\{0,1\}$ by the equation $\chi(-1)=(-1)^{\sigma(\chi)}$. If $j$ is a positive integer with $j\equiv \sigma(\chi) \modulo{2}$, the numbers $L(j,\chi)$ are non-zero algebraic multiples of $\pi^j$, hence, in particular, irrational. We prove the following result on the irrationality of the remaining $L$-values. 
    \begin{theorem}
        \label{theorem: main}
        Let $\chi$ be a real Dirichlet character of conductor $T\ge 2$. For all $\epsilon>0$ there is a positive integer $s_0$ satisfying the following property. For all $s\ge s_0$ of the opposite parity as $\sigma(\chi)$ the set
        \[
            \left\{ L(j,\chi) \mid j\in \Z, \; 2\le j\le s, \; j\equiv \sigma(\chi)+1\modulo{2} \right\}
        \]
        contains at least
        \[
            (c(T)-\epsilon) \sqrt{\frac{s}{\log s}}
        \]
        irrational numbers, where 
        \[
            c(T)=\sqrt{\frac{2\zeta(2)\zeta(3)}{\zeta(6)T}\log \left(\frac{\sqrt{4e^2+T^2}+T}{\sqrt{4e^2+T^2}-T} \right)}.
        \]
    \end{theorem}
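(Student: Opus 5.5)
Following the Lai--Yu strategy for odd zeta values, we build, for each large $n$ and for a set of auxiliary rational shifts, linear forms in $1$ and the values $L(j,\chi)$ ($j\equiv\sigma(\chi)+1 \pmod 2$, $2\le j\le s$) with controlled rational coefficients. Fix an integer parameter $r$ (to be optimised, of size about $\sqrt{s/\log s}$) and a large multiple $n$. Consider a well-poised hypergeometric rational function
\[
R_n(t)=\frac{\prod_{k}(t\pm \text{shifts})\cdot n!^{\,s+1-2r}\,(\ldots)}{\prod_{k=0}^{n}(t+k)^{s+1}},
\]
and, rather than a single sum $\sum_t R_n(t)$, take a twisted combination $\sum_{m\ge1}\chi$-weighted values $\sum_{a \bmod T}\chi(a)\sum_{m} R_n(m+a/T)$, so that partial fractions yield $\sum_j \rho_{j} L(j,\chi)$ plus a rational constant. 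The symmetry $R_n(-n-t)=\pm R_n(t)$ combined with $\chi(-a)=\chi(-1)\chi(a)$ kills the coefficients of $L(j,\chi)$ with $j\equiv\sigma(\chi)\pmod 2$. Following Lai--Yu, we then use several shifted copies (indexed by $\theta$ in a set of size roughly $r$ of rationals with denominator dividing the shift structure) and take a linear combination eliminating the coefficients of $L(j,\chi)$ for $j$ in a prescribed set of small indices, so that each resulting linear form involves only a small set of values. Concretely: suppose only $N$ of the relevant values are irrational; construct forms in which all but these $N$ values and $1$ appear, and derive a contradiction if $N< (c(T)-\epsilon)\sqrt{s/\log s}$.

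Next, we bound denominators. The coefficients $\rho_j$ are rational with a common denominator dividing $d_{Tn}^{\,s+1-j}$ times a factor coming from the shifts. Here we exploit the Zudilin/Fischler–Sprang–Zudilin arithmetic: dividing by primes $p$ with $p$ in suitable ranges (the products over $\theta$ produce factorials whose $p$-adic valuations give a saving) yields a saving factor $\Phi_n$. The Lai--Yu refinement replaces a crude bound by the product $\prod_p p^{\min(\ldots)}$ whose logarithm asymptotically produces the $\frac{\zeta(2)\zeta(3)}{\zeta(6)}$ constant, coming from averaging over denominators of the shifts.

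For analytic estimates, the sizes $|\rho_j|$ grow like $e^{n(s\log 2+o(s))}$ up to the $n!^{2r}$ factors, while the linear form itself behaves like $e^{-n s \log(r)\cdot(\ldots)}$ through a saddle point analysis; the $\chi$-twist means the sum over $a\bmod T$ produces the $T$-dependence $\log\frac{\sqrt{4e^2+T^2}+T}{\sqrt{4e^2+T^2}-T}$ from the saddle point location. Since $\chi$ is not positive, we cannot assert the form is nonzero via positivity; instead we invoke the irrationality criterion of Lai--Sprang: if a family of linear forms with integer coefficients (after clearing denominators) tends to $0$ and the coefficient vectors satisfy a non-degeneracy (determinant/linear independence) condition across the shifts, then the dimension bound follows. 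We will verify the required nonvanishing of a determinant built from the $\theta$-copies, e.g.\ through a Vandermonde-type structure in the leading coefficients.

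Finally, combining: if fewer than $N$ values were irrational, the integer linear forms $d\cdot\Lambda_n$ are nonzero for infinitely many $n$ yet tend to $0$ once $\log(\text{denominator}) + \log|\text{form}|<0$, i.e.\ once $r^2\log r$-type saving exceeds $s\log 2$ losses; optimising $r\approx c(T)\sqrt{s/\log s}$ gives the stated count. The main obstacle is the non-vanishing/independence input replacing positivity: making the Lai--Sprang criterion apply requires showing the forms from different shifts are genuinely independent, which I expect to be the delicate step, attacked via explicit leading-term asymptotics.
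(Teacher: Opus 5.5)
\textbf{There is a genuine gap: the non-vanishing step.} You defer it as ``delicate'', but it is where the proof actually lives, and you misdescribe the tool you name for it. The Lai--Sprang criterion is not a non-degeneracy or determinant condition on coefficient vectors across shifts. It requires primes $p_n\to\infty$ with $v_{p_n}(c_{n,0})<v_{p_n}(c_{n,i})$ for all $i\ge 1$, where $c_{n,0}$ is the coefficient of $1$. Its purpose is to replace positivity. Suppose the surviving values $L(j,\chi)$ were rational with common denominator $D$. Then the integer $D\delta_n^{s+1}r_n''$ has the same $p_n$-adic valuation as $D\delta_n^{s+1}\rho_0''$, so it is non-zero, contradicting $r_n''\to 0$.

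A non-singular determinant gives only independence of the forms attached to different shifts. That says nothing about whether the single eliminated form vanishes at $(1,L(2,\chi),\dots,L(s,\chi))$. So your claim that $d\cdot\Lambda_n\neq 0$ for infinitely many $n$ is unsupported.

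Producing the primes is the hard part, and there is no analogue of the prime $n+1$ from the zeta case. Here $\delta_n^{s+1}\rho_{0,\theta}$ and $\delta_n^{s+1-i}\rho_i$ are integers with $\delta_n=Td_n$. The natural test primes are therefore $p_n\le n$, and for them you must show $v_{p_n}(\rho_0'')=-(s+1)$ \emph{exactly}; the bound $v_{p_n}(\rho_i'')\ge -(s-1)$ is automatic.

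The paper handles this as follows.
\begin{enumerate}
\item It takes $p_n\equiv -1\pmod{P_B}$ just below $n/(r_2\mu_n)$, with $\mu_n=\lfloor\log\log n\rfloor$. Then $n$ has two $p_n$-adic digits and $n-r_2\mu_np_n$ grows slowly.
\item A careful computation using Lucas's theorem gives $p_n^{s+1}\rho_0''\equiv u_n\widetilde Q\pmod{p_n}$. Here $u_n$ is a $p_n$-unit and $\widetilde Q=\sum_b w_bQ_b''$ has height $o(p_n)$.
\item An asymptotic analysis in $n$ gives $Q_b''\sim(-1)^{\sigma(\chi)}b\,L(1,\chi)\cdot(\text{a factor independent of } b)$.
\item Only then does a determinant enter. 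Since $L(1,\chi)\neq 0$, the Vandermonde-type lemma of Fischler--Sprang--Zudilin makes the matrix with columns $(Q_b'')_b,(b^{i_1})_b,\dots,(b^{i_\psi})_b$ invertible. So the weights $w_b$ can kill the $\psi$ supposedly irrational values while keeping $\widetilde Q\neq 0$.
\end{enumerate}

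\textbf{Your parameter bookkeeping would also not produce $c(T)$.} The parameter of size $\sqrt{s/\log s}$ is $B$, not $r$. Writing $B=\tilde c\sqrt{s/\log s}$, the forms $r'_{n,b}$ (whose $L(j,\chi)$-coefficients are $b^j\rho_j$) are indexed by $b\in\Psi_B=\{b:\phi(b)\le B\}$. The count $|\Psi_B|\sim\frac{\zeta(2)\zeta(3)}{\zeta(6)}B$ is the number of irrationals obtained, whereas $r$ is a fixed rational close to $(\sqrt{4e^2+T^2}-T)/2$.

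The $T$-dependence does not come from a saddle point of the $\chi$-twist. Only the upper bound $\limsup_n|r_{n,\theta}|^{1/n}\le g(x_0)$ is needed, obtained by splitting into residue classes mod $T$, each a positive sum. The constant $c(T)$ then comes from imposing $e^{s+1}g(x_0)<1$, that is, $\frac{\zeta(2)\zeta(3)}{4\zeta(6)}\tilde c^2(2r+T)<\log T+(1+x)\log(1+x)-x\log x-1$ with $x=r/T$, and maximising over $x$.

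This step uses the denominator $\delta_n^{s+1}=T^{s+1}d_n^{s+1}$ for the constant term. Sprang's arithmetic lemma provides it because every shift contributes $(2r+T)n$ linear factors to the numerator of $R_n$. With your denominator $d_{Tn}^{s+1-j}$, the factor $e^{s+1}$ would become roughly $e^{T(s+1)}$ and the constant would be far worse. The $\Phi_n$-savings you invoke are not used in the paper and would change the constant.
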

    Before discussing Theorem~\ref{theorem: main} in detail, we give a historical overview of the current known results about the irrationality of special values of $L$-functions. Apéry \cite{Apery} proved the irrationality of an odd zeta value for the first time, namely $\zeta(3)$. The subsequent breakthrough \cite{Ball-Rivoal} was Ball and Rivoal's achievement of the linear independence of infinitely many odd zeta values. Their original lower bound on the dimension of the $\Q$-vector space generated by $\zeta(3),\zeta(5),\dots, \zeta(s)$ is 
    \[
        \frac{1+o(1)}{1+\log 2} \log s
    \]
    as $s\to \infty$. Their method was the starting point for several subsequent quantitative
    refinements. \\
    Building on ideas of Zudilin \cite{Zudilin2} and Sprang \cite{Sprang}, the authors of \cite{Fischler-Sprang-Zudilin} showed that there are, asymptotically in $s$, at least 
    \[
        2^{(1-\epsilon)\frac{\log s}{\log \log s}}
    \]
    irrationals among $\zeta(3),\zeta(5),\dots, \zeta(s)$. Their strategy was optimized by Lai and Yu \cite{Lai-Yu} to achieve a lower bound similar to the one in Theorem~\ref{theorem: main}. The constant in the asymptotics has been further refined by Lai \cite{Lai}. With a substantially different approach, Fischler \cite{Fischler-Siegel} upgraded Lai and Yu's result to linear independence, up to a constant factor. \\
    With the exception of \cites{Lai-Yu, Lai}, the results quoted so far have been generalized to the case of $L$-values. Nash \cite{Nash} adapted the strategy of Ball and Rivoal to values of Dirichlet series for $T$-periodic functions whose sum over $[1,T]\cap \Z$ is not zero. Nishimoto \cite{Nishimoto} established the analogue of \cite{Ball-Rivoal} in the case of Dirichlet $L$-functions with the lower bound
    \[
        \frac{1-\epsilon}{T+\log 2} \log s.
    \]
    Using some techniques from \cites{Sprang, Fischler-Sprang-Zudilin}, Fischler \cite{Fischler} established the same asymptotic result as \cite{Fischler-Sprang-Zudilin} in the case of special values of Dirichlet $L$-functions. The method of his subsequent improvement for zeta values \cite{Fischler-Siegel} was later generalized by Mistiaen \cite{Mistiaen} to $L$-values for Dirichlet characters with conductor $T$ not divisible by $4$, with a lower bound on the dimension given by
    \[
        \frac{0.42}{T^{\frac{3}{2}}} \sqrt{\frac{s}{\log s}}.
    \]
    Let us clarify how Theorem~\ref{theorem: main} compares to these latest results. Although the asymptotic bound in $s$ of our result is similar to that of \cite{Mistiaen}, we improve the dependence on the conductor $T$. Indeed, $c(T)$ in Theorem~\ref{theorem: main} dominates the corresponding quantity in Mistiaen's work by a factor $T\sqrt{\log T}$ as $T\to \infty$. \\
    Moreover, Theorem~\ref{theorem: main} applies to any choice of conductor $T$, while the result of \cite{Mistiaen} excludes the case where $4$ divides $T$. In particular, our result is applicable also to the values of the $\beta$-function, associated with the unique non-trivial Dirichlet character of conductor $4$. Note that the best known asymptotic bound on the number of irrational $\beta$-values is the one of \cite{Fischler}. On the other hand, contrary to Mistiaen's work, Theorem~\ref{theorem: main} only deals with real Dirichlet characters and does not provide linear independence. \\
    Finally, the proof method that we adopt for Theorem~\ref{theorem: main} should allow for further refinements taking into account Zudilin's $\Phi_n$-factors \cite{Zudilin}. In this way, one may improve the constant $c(T)$ in Theorem~\ref{theorem: main}, as was observed in \cite{Lai}. Moreover, this could allow for stronger results when specializing to small values of $s$. \\

    Let us now discuss the proof strategy of Theorem~\ref{theorem: main}. Essentially, we adapt the constructions of \cite{Lai-Yu} to the case of Dirichlet $L$-values and overcome some intrinsic difficulties by exploiting and refining some techniques used for irrationality proofs of $p$-adic zeta values \cite{Lai-Sprang}. \\
    A typical strategy to prove the irrationality of at least one among some real numbers $\alpha_1,\dots,\alpha_k$ consists of constructing a sequence of linear forms $L_n(X_1,\dots, X_k)=a_{n,1}X_1+\dots +a_{n,k}X_k$ with integer coefficients such that $L_n(\alpha_1,\dots, \alpha_k)\neq 0$ for all $n$ and at the same time $L_n(\alpha_1,\dots, \alpha_k)\to 0$ as $n\to \infty$. \\
    Ball and Rivoal \cite{Ball-Rivoal} used carefully designed well-poised hypergeometric series to produce such linear forms $L_n$ for $\alpha_i=\zeta(2i+1)$. In order to obtain a result of linear independence, they studied further properties of these linear forms to apply Nesterenko's linear independence criterion \cite{Nesterenko}. The authors of \cite{Fischler-Sprang-Zudilin} modified the construction of Ball and Rivoal to produce several linear forms for all $n$ with related coefficients, then combine them in a careful way to remove many odd zeta values from the resulting linear combinations. However, this technique only provides a lower bound on the number of irrationals among $\zeta(3),\zeta(5),\dots,\zeta(s)$. \\
    By introducing a non-trivial Dirichlet character $\chi$ in the construction of Ball and Rivoal, it is possible to produce linear combinations in the corresponding $L$-values. However, the condition $L_n(\alpha_1,\dots, \alpha_k)\neq 0$, which in the case of zeta values is trivially verified for positivity reasons, becomes now more difficult to handle. Since the sum of $\chi$ over its period equals zero, the leading term in the asymptotic behaviour of $L_n(\alpha_1,\dots, \alpha_k)$ vanishes \cite{Nash}, which forces a study of the sub-leading term. Despite this being possible, it typically requires some sophisticated analytic arguments, such as the saddle point method \cite{Nishimoto} or some zero estimates \cite{Fischler}. \\
    In the way of adapting the construction of \cite{Lai-Yu} to special values of Dirichlet $L$-functions, we propose a more elementary strategy to address this problem. Another context in which a similar difficulty arises is the $p$-adic world. For example, it is not known whether all $p$-adic odd zeta values are non-zero \cite{Calegari}, which makes the task of checking the non-vanishing of their linear combinations even more elusive. \\
    To solve this issue in the $p$-adic setting, Lai and Sprang \cite{Lai-Sprang} exploit a new irrationality criterion. Instead of proving that $L_n(\alpha_1,\dots, \alpha_k)\neq 0$, they observe that it suffices to find an unbounded sequence of primes $p_n$ for which the $p_n$-adic valuation of the coefficient $a_{n,1}$ is strictly smaller than that of $a_{n,i}$ for all $i\ge 2$ and all $n$. They construct linear forms $L_n(X_1,\dots, X_k)=b_{n,1}X_1+\dots +b_{n,k}X_k$ with rational coefficients $b_{n,i}$, following \cite{Lai-Yu}, and note that, for $i\ge 2$,
    \[
        \text{lcm}(1,\dots, n+1)^{s+1}b_{n,1}\in \Z, \qquad \text{lcm}(1,\dots, n)^{s+1-i}b_{n,i}\in \Z.
    \]
    By choosing an infinite sequence of $n\in \N$ such that $n+1$ is prime, they then check that the numerator of $b_{n,1}$ does not clear all the $s+1$ factors of $n+1$ in the denominator. Hence, they may apply their irrationality criterion with the choice $p_n=n+1$. \\
    We adopt a similar strategy for Theorem~\ref{theorem: main} with one crucial technical difference. When adapting the linear forms of \cite{Lai-Yu} to $L$-values, there appears no denominator $n+1$ in any of the coefficients. In particular, for every prime $p\le n$ the difference between the $p$-adic valuation of the denominators of $b_{n,1}$ and $b_{n,i}$ is bounded by $1$, and not by $s+1$. This forces us to choose a completely different sequence of primes $p_n$ for the application of \cite{Lai-Sprang}. This leads us to estimate the $p_n$-adic valuation of the coefficients of these linear forms precisely, rather than giving an upper bound.  To the best of our knowledge, this is the first time that the complex version of Lai and Sprang's criterion is used in an irrationality proof. \\

    The structure of the article is as follows. In Section 1, we set up all necessary parameters for our construction. We also describe the choice for the sequence of primes $p_n$ necessary for the irrationality criterion of \cite{Lai-Sprang}. In Section 2, we introduce our linear forms, which are modelled after \cite{Lai-Yu}. \\
    In Section 3, we estimate precisely the $p_n$-adic valuation of the coefficients of the linear forms. Since this constitutes the novel technical heart of the proof, we will discuss it in detail. In Section 4 and 5, we closely follow \cites{Fischler-Sprang-Zudilin, Lai-Yu} to show that our linear combinations of $L$-values are sufficiently small to conclude the proof of Theorem~\ref{theorem: main}. \\

    \noindent
    \textbf{Acknowledgements.} The authors thank Johannes Sprang for his encouragement and for giving them the opportunity to meet at the University of Duisburg-Essen. They also thank Li Lai for his helpful comments. The research of the first named author research is supported by ANRF-PMECRG grant ANRF/ECRG/2024/002315/PMS and partially supported by NBHM research grant. The second named author acknowledges partial financial support from the DFG Research Training Group 2553.

%% file: 2_Notation.tex
    Let $T\ge 2$ be an integer and $\chi$ be a Dirichlet character of conductor $T$. We assume $\chi$ to be real, so it takes values only in $\{ -1,0,1\}$. Let $\sigma(\chi)\in\{0,1\}$ be defined by the equation
    \[
        \chi(-1)=(-1)^{\sigma(\chi)}.
    \]
    We fix a positive integer $s$ such that $s\equiv \sigma(\chi)+1 \modulo{2}$. Take a positive rational number $r\in \Q$, which we write as $r= \frac{r_1}{r_2}$ with $r_1$ and $r_2$ relatively prime positive integers. \\
    Also fix a positive integer $B>0$, that will eventually be chosen depending on $s$. Following \cite{Lai-Yu}, we define a \emph{denominator set}
    \[
        \Psi_B =\{ b\in \N \mid \phi(b) \le B \}, 
    \]
    where $\phi$ is Euler's totient function, and a \emph{zero set}
    \[
        \mathcal{Z}_B=\left\{ \frac{a}{b}\in \Q \,\middle\vert\, b\in \Psi_B, \, 1\le a \le b, \, \gcd(a,b)=1 \right\}. 
    \]
    Here are some known properties of $\Psi_B$ and $\mathcal{Z}_B$.
    \begin{lemma}[{\cite[Proposition 2.2]{Lai-Yu}}] 
    The following properties hold:
    \begin{enumerate}
        \item For $B\to \infty$ we have
        \[
            |\Psi_B|=\left( \frac{\zeta(2)\zeta(3)}{\zeta(6)}+o(1)\right) B.
        \]
        \item For all $b\in \Psi_B$ we have $\frac{1}{b},\frac{2}{b},\dots, \frac{b}{b}\in \mathcal{Z}_B$.
        \item For all sufficiently large $B$ we have $|\mathcal{Z}_B|\le B^2$.
    \end{enumerate}
    \end{lemma}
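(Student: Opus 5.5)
The plan is to prove the three items separately. Item (2) is immediate and item (3) follows from item (1); only item (1) needs real input.

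For (1), write $|\Psi_B| = \#\{b : \phi(b)\le B\} = \sum_{m\le B} N(m)$, where $N(m)=\#\{b:\phi(b)=m\}$. The asymptotic $\sum_{m\le x}N(m)\sim \frac{\zeta(2)\zeta(3)}{\zeta(6)}x$ is a classical theorem of Erdős (refined by Bateman and Dressler). I would prove it by considering the Dirichlet series
\[
F(z)=\sum_{b\ge1}\phi(b)^{-z}=\prod_p\Bigl(1+\sum_{k\ge1}(p^{k-1}(p-1))^{-z}\Bigr).
\]
Comparing with $\zeta(z)$, one checks that $F(z)=\zeta(z)G(z)$, where $G$ is given by an Euler product converging absolutely for $\mathrm{Re}(z)>1/2$. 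Its value is
\[
G(1)=\prod_p\Bigl(1-\tfrac1p\Bigr)\Bigl(1+\tfrac{1}{p-1}\cdot\tfrac{p}{p-1}\Bigr)=\prod_p\frac{p^2-p+1}{p(p-1)}.
\]
This equals $\prod_p\frac{1-p^{-6}}{(1-p^{-2})(1-p^{-3})}=\frac{\zeta(2)\zeta(3)}{\zeta(6)}$, by the identity $(p^3+1)(p^3-1)=p^6-1$ together with $p^2-p+1=(p^3+1)/(p+1)$. A Tauberian theorem (Wiener–Ikehara, or simply an elementary convolution argument, since $G$ has absolutely convergent coefficients at $z=1$) then gives the asymptotic.

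The main obstacle is making the convolution argument rigorous, because the "coefficients" of $F$ are indexed by the non-integer values $\phi(b)$. One can either cite Erdős/Bateman directly, or do the convolution with a generalized Dirichlet series over the multiplicative semigroup of values of $\phi$. In practice I would simply invoke \cite[Proposition 2.2]{Lai-Yu}, i.e. the classical result.

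For (2), take $b\in\Psi_B$ and $1\le a\le b$. Write $a/b=a'/b'$ in lowest terms, so $b'\mid b$ and $1\le a'\le b'$. Since $b'\mid b$ implies $\phi(b')\le\phi(b)\le B$, we get $b'\in\Psi_B$, and hence $a/b\in\mathcal Z_B$. The monotonicity $\phi(b')\le \phi(b)$ for divisors follows from multiplicativity and $\phi(p^{k})\le\phi(p^{l})$ for $k\le l$.

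For (3), use $|\mathcal Z_B|=\sum_{b\in\Psi_B}\phi(b)\le B\,|\Psi_B|$. By (1), $|\Psi_B|\le (\zeta(2)\zeta(3)/\zeta(6)+o(1))B$, and this constant is approximately $1.94<$... In fact that bound alone gives only $\sim1.94B^2$, so I must refine it. By partial summation,
\[
\sum_{b\in\Psi_B}\phi(b)=\sum_{m\le B}mN(m)=B|\Psi_B|-\int_1^B|\Psi_t|\,dt\sim cB^2-\tfrac c2B^2=\tfrac c2B^2,
\]
where $c=\zeta(2)\zeta(3)/\zeta(6)\approx1.9436$. Then $c/2\approx0.97<1$, so $|\mathcal Z_B|\le B^2$ for all sufficiently large $B$.
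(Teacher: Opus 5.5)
Your proposal is correct, and it does more than the paper, which gives no argument for this lemma and only quotes Lai--Yu's Proposition~2.2. Your proof of (2) is complete: pass to lowest terms $a'/b'$ with $b'\mid b$ and use $\phi(b')\le\phi(b)$. Your proof of (3) is also complete: $|\mathcal{Z}_B|=\sum_{b\in\Psi_B}\phi(b)$ plus partial summation gives $|\mathcal{Z}_B|\sim\frac{c}{2}B^2$, where $c=\frac{\zeta(2)\zeta(3)}{\zeta(6)}$ and $\frac{c}{2}\approx 0.972<1$. You were right that the crude bound $B|\Psi_B|\sim cB^2$ would not suffice.

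For (1), the Wiener--Ikehara route you name is fully rigorous. You have $N(m)\ge 0$ and $F=\zeta G$ on $\mathrm{Re}\,z>1$. Moreover, the local factor telescopes:
$(1-p^{-z})\bigl(1+\sum_{k\ge1}(p^{k-1}(p-1))^{-z}\bigr)=1+(p-1)^{-z}-p^{-z}$.
Since $|(p-1)^{-z}-p^{-z}|\le|z|(p-1)^{-\mathrm{Re}\,z-1}$, $G$ is holomorphic on $\mathrm{Re}\,z>0$. Hence $F(z)-G(1)/(z-1)$ extends continuously to $\mathrm{Re}\,z\ge 1$, and $G(1)=\prod_p\bigl(1+\frac{1}{p(p-1)}\bigr)$ follows at once.

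You should, however, discard the parenthetical claim of an ``elementary convolution argument, since $G$ has absolutely convergent coefficients at $z=1$''. Convergence of $\prod_p G_p(z)$ as a product of numbers does not give absolutely summable coefficients here, because $G_p$ is not a power series in $p^{-z}$. If you expand $\prod_p\bigl(1+(p-1)^{-z}-p^{-z}\bigr)$ term by term, the sum of absolute values at $z=1$ is $\prod_p\bigl(1+\frac{1}{p-1}+\frac{1}{p}\bigr)=\infty$. The cancellation between $(p-1)^{-z}$ and $p^{-z}$ is only analytic, not coefficientwise. Your diagnosis of the obstacle is also off. The values $\phi(b)$ are integers, so $F(z)=\sum_m N(m)m^{-z}$ is an ordinary Dirichlet series; the real issue is that $N$ is not multiplicative.

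None of this affects your proof, since Wiener--Ikehara uses only the positivity of $N(m)$ and the continuation of $F$, not any coefficient bounds for $G$. Citing Erd\H{o}s's theorem directly, as the paper effectively does via Lai--Yu, works equally well.
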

    We define
    \[
        P_{B}=2\,T\,r_2 \, \textnormal{lcm}(\,p-1 \mid \textnormal{$p$ is a prime factor of some $b\in \Psi_B$}\,)\,\textnormal{lcm}(\,b\mid b\in \Psi_B\,). 
    \]
    We will also assume that
    \[
        s> (2r+T)|\mathcal{Z}_B|. 
    \]

    \bigskip

    Let us state the main irrationality criterion that will be used in the proof of Theorem~\ref{theorem: main}. It is due to Lai and Sprang \cite[Lemma 2.3]{Lai-Sprang}. For a prime number $p$ we denote by $v_p$ the standard $p$-adic valuation of a rational number.
    \begin{lemma}
    \label{lemma: irrationality criterion}
        Let $\xi_0,\dots, \xi_s$ be real numbers with $\xi_0\neq 0$. Let 
        \[
            L_n=c_{n,0}X_0+\dots + c_{n,s}X_s \in \Z[X_1,\dots, X_s] 
        \]
        be a sequence of non-trivial linear forms. Assume that there is an unbounded subset $\mathcal{N}\subseteq \N$ and, for all $n\in \mathcal{N}$, a prime $p_n$ such that the following conditions hold:
        \begin{enumerate}
            \item $L_n(\xi_0,\dots, \xi_s)\to 0$ as $n\to \infty$; \\
            \item we have $v_{p_n}(c_{n,0})< v_{p_n}(c_{n,i})$ for $i=1,\dots, s$; \\
            \item $p_n\to \infty$ as $n\to \infty$. 
        \end{enumerate}
        Then at least one of $\xi_0,\dots, \xi_s$ is irrational.
    \end{lemma}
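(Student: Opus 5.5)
We argue by contradiction: suppose all of $\xi_0,\dots,\xi_s$ are rational. Then we may fix a common denominator $D\ge 1$, independent of $n$, and integers $u_0,\dots,u_s$ with $\xi_i = u_i/D$ for every $i$. Since $\xi_0\neq 0$, we have $u_0\neq 0$. For each $n$ the quantity
\[
    D\,L_n(\xi_0,\dots,\xi_s)=c_{n,0}u_0+c_{n,1}u_1+\dots+c_{n,s}u_s
\]
is an integer. By condition (1) it tends to $0$, so it vanishes for all sufficiently large $n$. Because $\mathcal{N}$ is unbounded, there are infinitely many $n\in\mathcal{N}$ with
\[
    c_{n,0}u_0=-\sum_{i=1}^s c_{n,i}u_i .
\]

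Fix such an $n$ and write $p=p_n$. Condition (2) forces $c_{n,0}\neq 0$: if $c_{n,0}=0$, then $v_p(c_{n,0})=\infty$ could not be strictly smaller than $v_p(c_{n,i})$. Set $\nu=v_p(c_{n,0})<\infty$. Each nonzero term on the right-hand side has $v_p(c_{n,i}u_i)\ge v_p(c_{n,i})\ge \nu+1$, and vanishing terms have infinite valuation. By the ultrametric inequality the right-hand side therefore has valuation at least $\nu+1$. Taking $v_p$ of both sides gives
\[
    \nu+v_p(u_0)\ge \nu+1, \qquad\text{that is,}\qquad v_p(u_0)\ge 1 .
\]
So $p_n$ divides the fixed nonzero integer $u_0$.

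This holds for infinitely many $n\in\mathcal{N}$. By condition (3) the primes $p_n$ along these $n$ are unbounded, so infinitely many distinct primes divide $u_0\neq 0$. This is impossible, and hence at least one $\xi_i$ is irrational.

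The argument has no real obstacle. The points needing care are these: the common denominator must be independent of $n$; $c_{n,0}\neq 0$ must be extracted from condition (2); and the valuation inequality must be applied only to nonzero terms. Non-triviality of $L_n$ is not actually needed beyond $c_{n,0}\neq0$.
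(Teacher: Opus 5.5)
Your proof is correct. The paper gives no proof of its own and simply cites Lai and Sprang. Your argument is the standard direct proof of this real-number version: clear a common denominator independent of $n$, use that integers tending to $0$ eventually vanish, then compare $p_n$-adic valuations to force $p_n\mid u_0$ for unboundedly many primes. One small caveat on your closing remark: when $s=0$, condition (2) is vacuous, and there it is exactly the non-triviality of $L_n$ that gives $c_{n,0}\neq 0$.
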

    We now specify a set $\mathcal{N}\subseteq \N$ and a sequence of primes $p_n$ to apply Lemma~\ref{lemma: irrationality criterion} in our context. In Remark~\ref{remark: n+1 vs p_n} we will explain why we need a more elaborate choice of $p_n$ than in \cite{Lai-Sprang}. 
    \begin{remark}
        Let us give an intuitive description of the auxiliary primes $p_n$ that we are looking for. In the situation of Lemma~\ref{lemma: irrationality criterion}, we will estimate the first digit, say $d_{n,i}$, in the $p_n$-adic expansion of the coefficients $c_{n,i}$. \\
        Since $c_{n,i}$ will strongly depend on $n$, it is convenient to choose $p_n$ so that the residue class of $n$ modulo $p_n$ is small, in the sense that $n-p_n$ should be small. At the same time, to handle binomial coefficients modulo $p_n$, the $p_n$-adic expansion of $n$ should contain very few digits, so we require $\left\lfloor n/p_n \right\rfloor$ to be small. \\
        At this point, one might be tempted to choose $n-p_n$ and $\left\lfloor n/p_n \right\rfloor$ to be constant in $n$. For example, $p_n=n-1$. However, this would remove the dependence on $n$ in the $p_n$-adic digit $d_{n,i}$, which would depend only on $s$, $B$ and $\mathcal{Z}_B$. Due to the sophisticated definition of $\mathcal{Z}_B$, proving $d_{n,i}\neq 0$ would rely on non-trivial asymptotic estimates in $s$, which would require the saddle point method or discrepancy arguments. \\
        To keep the proof as elementary as possible, we ensure that $n-p_n$ and $\left\lfloor n/p_n \right\rfloor$ tend to infinity as $n\to \infty$ in a very controlled way. This provides us with two more parameters that eventually simplify the task of showing that $d_{n,i}\neq 0$.
    \end{remark}

    Fix $0<\delta<1$. Let $\{\mu_n\}_n\subseteq \N_{\ge 1}$ be an increasing sequence such that also $\frac{n}{\mu_n}$ is strictly increasing. Choose $f,g\colon \N \to \R$ such that
    \begin{align*}
        \mu_n & =o(\sqrt{n}), & \lim_{n\to\infty}f(n) & =+\infty, \\
        f&=o\left(\frac{n}{\mu_n}\right), & \lim_{n\to\infty}\frac{f}{g} &=\delta,
    \end{align*}
    where the $o$-notation is intended for $n\to \infty$. Consider
    \[
        L(k)=\frac{kP_B}{r_2\mu_ {kP_B}}-g(kP_B)-f(kP_B), \qquad U(k)= \frac{kP_B}{r_2\mu_ {kP_B}}-g(kP_B), \qquad I_k=[L(k),U(k)].
    \]
    Owing to the asymptotic properties of $f$ and $g$, it follows easily that $\min I_{k+1}<\max I_k$ for all $k$ larger than some $k_0>0$. Since $\lim_{n\to \infty} U(n)=\infty$, we have
    \[
        \bigcup_{k=k_0}^{\infty} I_k = [L(k_0),+\infty) 
    \]
    By Dirichlet's theorem on primes in arithmetic progressions, there are infinitely many primes congruent to $-1$ modulo $P_B$. Any sufficiently large such prime $p$ belongs to $I_{k_p}$ for some $k_p\ge k_0$. If we set $n_p=k_pP_B$, then
    \[
        p\in \left[ \frac{n_p}{r_2\mu_{n_p}}-g\left( n_p\right) - f\left(n_p\right), \frac{n_p}{r_2\mu_ {n_p}}-g\left( n_p\right) \right].
    \]
    Therefore, there is an infinite set $\mathcal{N}$ of positive integers $n$ such that
    \begin{enumerate}
        \item $P_B$ divides $n$;
        \item there is a prime $p_n\equiv -1\modulo{P_B}$ lying in the interval
        \[
            \left[ \frac{n}{r_2\mu_n}-g\left(n\right) - f\left( n\right), \frac{n}{r_2\mu_n}-g\left(n\right) \right].
        \]
    \end{enumerate}
    We will always assume that $n\in \mathcal{N}$ and write $p_n$ for the corresponding prime. In the following lemma we collect all the properties of these primes that will be used in the proof of Theorem~\ref{theorem: main}. 
    \begin{lemma}
    \label{lemma: properties of auxiliary primes}
        The following properties hold for all $n\in \mathcal{N}$ sufficiently large:
        \begin{enumerate}
            \item The only multiples of $p_n$ that lie in $[1,n]$ are $p_n,2p_n,\dots, r_2\mu_np_n$.
            \item We have 
            \[
                v_{p_n}\left( \left( \frac{n}{r_2}\right) ! \right)=\mu_n, \qquad v_{p_n}(n!)=r_2\mu_n, \qquad v_{p_n}(d_n)=1. 
            \]
            \item For $n\to \infty$,
            \[
                \qquad p_n\sim \frac{n}{r_2\mu_n}, \qquad n-r_2\mu_n=o(n).
            \]
            \item We have
            \[
                    v_{p_n} \left(\binom{n}{h}\right)=0 
            \]
            if and only if
            \[
                h=kp_n+x, \qquad 0\le k\le r_2\mu_n, \; 0\le x\le n-r_2\mu_np_n. 
            \]
        \end{enumerate}
    \end{lemma}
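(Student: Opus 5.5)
The plan is to derive everything from the defining inequalities
\[
\frac{n}{r_2\mu_n}-g(n)-f(n)\le p_n\le \frac{n}{r_2\mu_n}-g(n)
\]
together with the growth conditions $f=o(n/\mu_n)$, $f/g\to\delta\in(0,1)$ and $\mu_n=o(\sqrt n)$. From $f/g\to\delta$ we get $g\sim f/\delta=o(n/\mu_n)$, so $g+f=o(n/\mu_n)$. This immediately gives $p_n\sim n/(r_2\mu_n)$, and trivially $n-r_2\mu_n=o(n)$ since $\mu_n=o(\sqrt n)$; that is (3).

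For (1) we bound $r_2\mu_n p_n$ on both sides. The upper bound
\[
r_2\mu_np_n\le n-r_2\mu_n g(n)
\]
gives $r_2\mu_np_n\le n$. The lower bound
\[
r_2\mu_np_n\ge n-r_2\mu_n(g+f)
\]
gives $(r_2\mu_n+1)p_n\ge n-r_2\mu_n(g+f)+p_n$. Since $p_n\sim n/(r_2\mu_n)$ while $r_2\mu_n(g+f)=O(\mu_n f)$, we need $p_n> r_2\mu_n(g+f)$, i.e.\ $\mu_n^2 f=o(n)$. This is the one delicate point: the stated hypotheses give only $\mu_n f=o(n)$. It should work using $f+g\approx(1+1/\delta)f$ together with an implicit assumption that $f$ is small enough, or perhaps $f=o(n/\mu_n^2)$ was intended. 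Note that $f=o(n/\mu_n)$ and $\mu_n=o(\sqrt n)$ give $\mu_n f=o(n)$ only. If the argument fails I will flag this and use the stronger hypothesis. Granting it, $(r_2\mu_n+1)p_n>n$, so the multiples of $p_n$ in $[1,n]$ are exactly $kp_n$ with $1\le k\le r_2\mu_n$.

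For (2), Legendre's formula gives $v_{p_n}(n!)=\sum_j\lfloor n/p_n^j\rfloor$. Since $p_n^2\gg n$ (as $p_n\sim n/(r_2\mu_n)$ with $\mu_n=o(\sqrt n)$), only the $j=1$ term survives, and it equals $r_2\mu_n$ by (1). Similarly $n/r_2$ is an integer (because $r_2\mid P_B\mid n$), and $\mu_np_n\le n/r_2<(\mu_n+1)p_n$ follows by dividing the previous inequalities by $r_2$; hence $v_{p_n}((n/r_2)!)=\mu_n$. Finally $v_{p_n}(d_n)=\lfloor\log n/\log p_n\rfloor=1$, because $p_n\le n<p_n^2$.

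For (4), Kummer's theorem says $v_p\binom nh=0$ iff there are no carries when adding $h$ and $n-h$ in base $p$. Here $n=r_2\mu_n\,p_n+x_0$ with $x_0=n-r_2\mu_np_n<p_n$, and $r_2\mu_n<p_n$ for large $n$. So $n$ has two base-$p_n$ digits $(r_2\mu_n,x_0)$, and the no-carry condition is digitwise domination: $h=kp_n+x$ with $k\le r_2\mu_n$ and $x\le x_0$. This is exactly the claimed description.
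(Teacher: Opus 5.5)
Your proof is correct and follows the paper's route. You bound $(r_2\mu_n+1)p_n-n$ from below for (1), get (2) via Legendre's formula where the paper just calls it immediate, and use the two-digit base-$p_n$ expansion of $n$ for (4), with Kummer's theorem in place of the paper's equivalent use of Lucas's theorem. The point you flag is genuine, and the paper's proof glosses over it as well: it asserts $\frac{n}{r_2\mu_n}-(r_2\mu_n+1)(f+g)\sim\frac{n}{r_2\mu_n}$ citing only $f=o(n/\mu_n)$, but this actually needs $\mu_n^2f=o(n)$. The constant $1+1/\delta$ plays no role. For instance, $\mu_n\approx n^{2/5}$, $f\approx n^{1/2}$ gives $n-r_2\mu_np_n\ge r_2\mu_n g\gg p_n$, so (1) really fails under the general hypotheses. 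Instead of hedging, you should invoke the explicit choices $\mu_n=\lfloor\log\log n\rfloor$, $f=\log\log\log n$ fixed right after the lemma, for which $\mu_n^2f=o(n)$ is trivial.
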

    \begin{proof}
        Since $p_n\ge \frac{n}{r_2\mu_n}-g\left( n\right) - f\left(n\right)$, we have
        \[
            (r_2\mu_n+1)p_n -n\ge \frac{(r_2\mu_n+1)n}{r_2\mu_n}-n-(r_2\mu_n+1)g\left( n\right) - (r_2\mu_n+1)f\left(n\right) \sim \frac{n}{r_2\mu_n},
        \]
        because $f=o\left( \frac{n}{\mu_n} \right)$ and $g=\delta f+o(1)$. This difference is positive for $n$ large enough, so $(r_2\mu_n+1)p_n\not\in [1,n]$. The second and third point follow immediately. \\
        For the fourth claim, let us write
        \[
            n=(n-r_2\mu_np_n)+r_2\mu_np_n, \qquad h=x+kp_n
        \]
        for the $p_n$-adic expansions of $n$ and $p_n$, with $k\in \{0,\dots, r_2\mu_n\}$, $x\in \{0,\dots, p_n-1\}$. Note that the expansion of $n$ only has two digits, because $r_2\mu_n p_n<p_n^2$ thanks to the assumption $\mu_n=o(\sqrt{n})$. By Lucas's theorem, we have
        \[
            \binom{n}{h}\equiv \binom{r_2\mu_n}{k}\binom{n-r_2\mu_np_n}{x} \modulo{p_n}.
        \]
        The right-hand side vanishes if and only if $x>n-r_2\mu_np_n$. Hence, the $p_n$-adic valuation of this binomial coefficient is zero if and only if $x\le n-r_2\mu_np_n$.
    \end{proof}
    Let us now make explicit choices for $\delta$, $\mu_n$, $f$ and $g$. We set
    \[
        \mu_n=\left\lfloor \log\log(n) \right\rfloor, \qquad f(n)=\log\log\log(n), \qquad g(n)=\delta f(n).
    \]
    Since $T\ge 2$, we may choose $0<\delta <1$ such that
    \[
       \frac{1}{r+T-1} <\delta <1.
    \]
    Note that 
    \begin{equation}
        \label{eq: estimate for difference n and prime}
        r_2\mu_ng(n)\le n-r_2\mu_np_n \le r_2\mu_n(f(n)+g(n))\le r_2\mu_n g(n) \left(1+\frac{1}{\delta}+o(1)\right).
    \end{equation}
    for all $n\in \mathcal{N}$ sufficiently large. \\

%% file: 3_Linear_forms.tex
    \label{section: linear forms}
    For all $n\in \mathcal{N}$ we consider the  following rational function, modelled after \cite{Lai-Yu}:
    \[
        R_n(t) = C_n \cdot \frac{(t-rn)\prod_{\theta\in \mathcal{Z}_B} \prod_{k=0}^{(2r+T)n-1} (t-rn+k+\theta)}{\prod_{k=0}^n (t+Tk)^{s+1}},
    \]
    where $C_n$ is some rational constant depending only on $n$, $r$, $s$ and $B$. We first describe some structural properties of $R_n(t)$, then give the precise definition of $C_n$ needed for Theorem~\ref{theorem: main}.
    \begin{lemma}
        $R_n(t)$ satisfies the following properties:
        \begin{enumerate}
            \item $\deg R_n(t)\le -2$;
            \item $R_n(-t-Tn)=(-1)^{\sigma(\chi)+1}R_n(t)$;
            \item $R_n(t)$ has a partial fraction decomposition of the form
            \[
                R_n(t)=\sum_{i=1}^s\sum_{k=0}^n\frac{a_{i,k}}{(t+Tk)^i}
            \]
            with $a_{i,n-k}=(-1)^{i+\sigma(\chi)+1} \,a_{i,k}$ for all $i=1,\dots, s$, $k=0,\dots, n$. Moreover,
            \[
                \sum_{k=0}^n a_{1,k}=0.
            \]
        \end{enumerate}
    \end{lemma}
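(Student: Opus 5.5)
Write $N=|\mathcal{Z}_B|$. The plan is to verify the three properties in order, each by direct inspection of $R_n(t)$.

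\emph{Degree.} The numerator of $R_n(t)$ has degree $1+(2r+T)n\,N$, and the denominator has degree $(n+1)(s+1)$. The standing assumption $s>(2r+T)N$ gives $s+1\ge (2r+T)N+2$ as integers, so
\[
(n+1)(s+1)\ge (n+1)\bigl((2r+T)N+2\bigr)\ge (2r+T)nN+2n+2,
\]
and hence $\deg R_n\le 1-2n-2\le -2$.

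\emph{Symmetry.} Substitute $t\mapsto -t-Tn$ in each factor.
\begin{itemize}
\item The denominator: $\prod_{k}(-t-Tn+Tk)^{s+1}=(-1)^{(n+1)(s+1)}\prod_{k'}(t+Tk')^{s+1}$, using $k'=n-k$.
\item The linear factor: $-t-Tn-rn=-(t-rn+(2r+T)n)$.
\item For fixed $\theta$, the product over $k$ becomes $\prod_k\bigl(-(t-rn+(2r+T)n-k-\theta)\bigr)$. Set $k'=(2r+T)n-1-k$ and $\theta'=1-\theta$. Then the factors are $-(t-rn+k'+\theta')$, where $k'$ again runs over $0,\dots,(2r+T)n-1$.
\end{itemize}
The map $\theta\mapsto 1-\theta$ is not quite a bijection of $\mathcal{Z}_B$: it sends $a/b\mapsto (b-a)/b$, which fixes the coprimality condition but sends $1\mapsto 0$. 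So I would pair $\theta=1$ with $\theta'=0$ and note the telescoping mismatch. The factor with $\theta'=0$ and $k'$ ranging over $0,\dots,(2r+T)n-1$ differs from the $\theta=1$ product by one factor at each end: the $\theta=1$ product contains $t-rn+(2r+T)n$ but not $t-rn$. The extra linear factor $(t-rn)$ in $R_n$ is exactly what absorbs this. Together with the image $t-rn+(2r+T)n$ of that linear factor, the full numerator maps to itself up to sign.

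The total sign is $(-1)^{1+(2r+T)nN+(n+1)(s+1)}$. Since $P_B\mid n$ and $2Tr_2\mid P_B$, $(2r+T)n$ is an even integer, and $n$ is even. So the sign is $(-1)^{1+s+1}=(-1)^{s}$, which equals $(-1)^{\sigma(\chi)+1}$ by the parity assumption on $s$. I expect this $\theta\mapsto1-\theta$ bookkeeping at the endpoints to be the main place for care.

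\emph{Partial fractions.} Since $\deg R_n<0$ and the poles are at $-Tk$ with order at most $s+1$, $R_n$ has a partial fraction expansion with exponents $i\le s+1$. The numerator vanishes at every $t=-Tk$. Indeed, taking $\theta=1\in\mathcal{Z}_B$, its factors include $t-rn+j$ for $j=1,\dots,(2r+T)n$, plus the factor $t-rn$. We need $rn-Tk$ to lie in $[0,(2r+T)n]$, which holds for $0\le k\le n$, and $rn-Tk$ is an integer because $r_2\mid n$. So each pole has order at most $s$, giving $i\le s$.

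Applying the symmetry to the expansion and using uniqueness of partial fractions gives
\[
\sum\frac{a_{i,k}}{(-t-Tn+Tk)^i}=\sum(-1)^i\frac{a_{i,k}}{(t+T(n-k))^i},
\]
hence $a_{i,n-k}=(-1)^{i+\sigma(\chi)+1}a_{i,k}$. Finally, $\sum_k a_{1,k}$ is the coefficient of $1/t$ in the expansion of $R_n$ at infinity, i.e. $\lim_{t\to\infty} tR_n(t)$. This vanishes since $\deg R_n\le-2$.
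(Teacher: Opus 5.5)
Your proof is correct and follows the same route as the paper's much terser argument: the degree count from $s>(2r+T)|\mathcal{Z}_B|$, the symmetry via $\theta\mapsto 1-\theta$ (with the $\theta=1$ endpoint absorbed by the extra factor $t-rn$) and the parities of $n$ and $s$, then uniqueness of partial fractions and $\lim_{t\to\infty} tR_n(t)=0$ for part (3). There are two harmless slips to fix. First, your $N=|\mathcal{Z}_B|$ gives a quantity $(2r+T)N$ that need not be an integer, so instead argue directly that $(n+1)(s+1)-1-(2r+T)nN=n\bigl(s-(2r+T)N\bigr)+n+s>n+s\ge 2$. Second, the zero of $t-rn+j$ at $t=-Tk$ occurs for $j=rn+Tk$ (not $rn-Tk$), which is an integer in $[0,(2r+T)n]$ for $0\le k\le n$.
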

    \begin{proof} 
        The first claim follows from the inequality $ s > (2r+T)|\mathcal{Z}_B|$. The second claim uses that $n\in \mathcal{N}$ is even and $s\equiv \sigma(\chi)+1 \modulo{2}$.
    \end{proof}  
    We now use this rational function to construct rational linear combinations of the desired $L$-values. Given positive integers $h$ and $b$ we define
    \[
        \chi_{h,b}(t)=\chi(bt+h).
    \]
    Note that $\chi_{h,b}$ is $T$-periodic. Moreover, if $\gcd(h,b)=1$, we write $\chi_{\frac{h}{b}}$ instead of $\chi_{h,b}$. In general, we have 
    \[
        \chi_{h,b}=\chi(\gcd(h,b))\chi_{\frac{h}{b}}.
    \]
    For all $\theta\in \mathcal{Z}_B$, we define 
    \[
        r_{n,\theta}=\sum_{t=1}^\infty R_n(t+\theta)\chi_\theta(t).
    \]
     \begin{lemma}
     \label{lemma: coefficients of linear forms}
        For each  $\theta\in \mathcal{Z}_B$, we have 
        \[
            r_{n,\theta}=\rho_{0,\theta}+\sum_{j=2}^s \rho_j \,\zeta(j, \theta; \chi_\theta), \qquad \zeta(j, \theta; \chi_\theta)=\sum_{m=0}^\infty \frac{\chi_\theta(m)}{(m+\theta)^j},
        \]
        for some $\rho_{0,\theta}, \rho_j \in \Q $. For all $j=2,\dots, s$ the coefficient 
        $$
    \rho_j=\sum_{k=0}^na_{j,k}
        $$
        does not depend on $\theta\in\mathcal{Z}_B$, and 
        $$
    \rho_{0,\theta}=-\sum_{k=0}^n\sum_{\ell=0}^{Tk}\sum_{j=1}^s\frac{a_{j,k} \, \chi_\theta(l)}{(\ell+\theta)^j}.
        $$
        Moreover, for all $j=1,\dots, s$ such that $j\equiv \sigma(\chi)\modulo{2}$ we have $\rho_j=0$.
    \end{lemma}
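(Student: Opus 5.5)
Substitute the partial fraction decomposition of the preceding lemma into the series defining $r_{n,\theta}$ and reorganize. Since $\deg R_n\le -2$, the series $\sum_{t\ge 1}R_n(t+\theta)\chi_\theta(t)$ converges absolutely. We may therefore write
\[
    r_{n,\theta}=\sum_{t=1}^\infty\sum_{j=1}^s\sum_{k=0}^n \frac{a_{j,k}\,\chi_\theta(t)}{(t+\theta+Tk)^j}.
\]
For $j\ge 2$ each inner sum converges absolutely and may be handled separately. For $j=1$ we first group the terms over $k$, using $\sum_k a_{1,k}=0$. This makes the combined $j=1$ summand $O(t^{-2})$, which justifies the manipulations.

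Next, for fixed $j,k$, shift the index $m=t+Tk$. Because $\chi_\theta$ is $T$-periodic, $\chi_\theta(t)=\chi_\theta(m)$. Hence
\[
    \sum_{t=1}^\infty \frac{\chi_\theta(t)}{(t+\theta+Tk)^j}=\sum_{m=Tk+1}^\infty\frac{\chi_\theta(m)}{(m+\theta)^j}=\zeta(j,\theta;\chi_\theta)-\sum_{\ell=0}^{Tk}\frac{\chi_\theta(\ell)}{(\ell+\theta)^j}.
\]
Summing against $a_{j,k}$ gives $\rho_j=\sum_k a_{j,k}$ as the coefficient of $\zeta(j,\theta;\chi_\theta)$ for $j\ge 2$; this is visibly independent of $\theta$. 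The finite sums collect into exactly the stated $\rho_{0,\theta}$, which is rational because $\theta\in\Q$ and the $a_{j,k}\in\Q$.

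For $j=1$ the tail $\zeta(1,\theta;\chi_\theta)$ diverges, so we must be careful here. We work with truncations $\sum_{t=1}^{M}$, where the shifted sums become $\sum_{m=Tk+1}^{M+Tk}$. Their common part $\sum_{m=Tk+1}^{M}$ is multiplied by $\sum_k a_{1,k}=0$ after rewriting as $\sum_{m=0}^{M}$ minus the initial segment. The leftover boundary pieces $\sum_{m=M+1}^{M+Tk}\chi_\theta(m)/(m+\theta)$ are $O(Tk/M)$ and vanish as $M\to\infty$. This yields the $j=1$ contribution to $\rho_{0,\theta}$ with no $\zeta(1,\cdot)$ term, consistent with $\rho_1=0$. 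This limiting argument is the only delicate step.

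Finally, the symmetry $a_{j,n-k}=(-1)^{j+\sigma(\chi)+1}a_{j,k}$ gives $\rho_j=(-1)^{j+\sigma(\chi)+1}\rho_j$ after reindexing $k\mapsto n-k$. For $j\equiv\sigma(\chi)\pmod 2$ the sign is $-1$, so $\rho_j=0$; the case $j=1$, when $\sigma(\chi)=1$, is also covered directly by $\sum_k a_{1,k}=0$.
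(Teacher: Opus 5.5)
Your proposal is correct and takes essentially the same route as the paper, which simply defers to \cite[Lemma 1]{Fischler-Sprang-Zudilin}: expand via the partial fraction decomposition, shift $m=t+Tk$ using the $T$-periodicity of $\chi_\theta$, handle the $j=1$ terms by truncation together with $\sum_k a_{1,k}=0$, and deduce $\rho_j=0$ for $j\equiv\sigma(\chi)\modulo{2}$ from $a_{j,n-k}=(-1)^{j+\sigma(\chi)+1}a_{j,k}$. One harmless imprecision is your claim that $\zeta(1,\theta;\chi_\theta)$ diverges: this fails, for example, when $\gcd(b,T)=1$, where the series converges conditionally, but your truncation argument never uses divergence.
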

    \begin{proof}
        The proof is essentially \cite[Lemma 1]{Fischler-Sprang-Zudilin}.
    \end{proof}
    \begin{remark}
    If we take positive integers $h$ and $b$, we can also define $r_{n,h,b}$ by replacing $\chi_\theta$ by $\chi_{h,b}$. The previous statement still holds with $\chi_{\theta}$ replaced by $\chi_{h,b}$. We observe that, if $\theta=\frac{h}{b}$,
    \[
        r_{n,h,b}=\chi(\gcd(h,b))r_{n,\theta} \qquad \text{and} \qquad \zeta(j,\theta;\chi_{h,b})=\chi(\gcd(h,b))\zeta(j,\theta;\chi_{\theta}).
    \]
    Thus, the coefficients $\rho_j$ for $j=2,\dots, s$ are the same in $r_{n,h,b}$ and $r_{n,\theta}$.
    \end{remark}
    Next, for all $b\in \Psi_B$ we consider the linear forms
        \[
            r_{n,b}'= \sum_{h=1}^b r_{n,h,b}=\sum_{h=1}^b \chi(\gcd(h,b))\,r_{n,\frac{h}{b}}. 
        \]
    This step allows the desired $L$-values to appear, as shown by the following lemma.
    \begin{lemma}
        For all $b\in B$, we have 
        \[
            r_{n,b}'=\rho_{0,b}' + \sum_{j=2}^s b^j\rho_j \, L(j,\chi),
        \]
        where 
        \[
            \rho_{0,b}'=\sum_{h=1}^b \rho_{0,h,b}= \sum_{h=1}^b\chi(\gcd(h,b))\rho_{0,\frac{h}{b}}.
        \]
    \end{lemma}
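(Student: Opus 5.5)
The plan is to sum the expansion of Lemma~\ref{lemma: coefficients of linear forms} over $h=1,\dots,b$ and to recombine the Hurwitz-type series into $L(j,\chi)$. By the preceding remark, for each $h$ we have
\[
    r_{n,h,b}=\rho_{0,h,b}+\sum_{j=2}^s \rho_j\,\zeta\!\left(j,\tfrac{h}{b};\chi_{h,b}\right),
\]
with the same coefficients $\rho_j$ for every $h$. Moreover $\rho_{0,h,b}=\chi(\gcd(h,b))\rho_{0,\frac hb}$, which follows from $\chi_{h,b}=\chi(\gcd(h,b))\chi_{\frac hb}$ and the explicit formula for $\rho_{0,\theta}$. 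Summing over $h$ therefore gives $r'_{n,b}=\rho'_{0,b}+\sum_{j=2}^s\rho_j\sum_{h=1}^b\zeta(j,\frac hb;\chi_{h,b})$, where $\rho'_{0,b}$ is exactly as stated. It remains to show, for each $j\ge 2$, that
\[
    \sum_{h=1}^b \zeta\!\left(j,\tfrac{h}{b};\chi_{h,b}\right)=b^j L(j,\chi).
\]

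For this identity, write out the definitions:
\[
    \zeta\!\left(j,\tfrac hb;\chi_{h,b}\right)=\sum_{m\ge 0}\frac{\chi(bm+h)}{(m+h/b)^j}=b^j\sum_{m\ge0}\frac{\chi(bm+h)}{(bm+h)^j}.
\]
As $h$ runs over $1,\dots,b$ and $m$ over $\N_{\ge0}$, the integer $N=bm+h$ runs over every positive integer exactly once, by Euclidean division with remainder taken in $\{1,\dots,b\}$. Hence the double sum is $b^j\sum_{N\ge1}\chi(N)N^{-j}=b^jL(j,\chi)$. Since $j\ge2$, all the series converge absolutely, so the rearrangement is justified.

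The only point requiring care is the consistency of the constant terms. One must check that the $\rho_{0}$-term of $r_{n,h,b}$ really is $\chi(\gcd(h,b))\rho_{0,\frac hb}$, and that no index shift appears: the definition of $r_{n,\theta}$ sums over $t\ge1$, while $\zeta$ sums over $m\ge0$. The explicit $\rho_{0,\theta}$ in Lemma~\ref{lemma: coefficients of linear forms} already absorbs that shift, and it is linear in the character. So I expect no real obstacle; the argument is a bookkeeping identity.
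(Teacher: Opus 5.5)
Your proposal is correct and follows essentially the same route as the paper: sum the expansions from the remark over $h$, use $\chi_{h,b}=\chi(\gcd(h,b))\chi_{\frac hb}$ for the constant terms, and recombine $\sum_{h=1}^b\zeta(j,\tfrac hb;\chi_{h,b})=b^jL(j,\chi)$ via the unique decomposition $N=bm+h$ with $m\ge 0$ and $1\le h\le b$. Your added justifications, namely absolute convergence for $j\ge 2$ and the bookkeeping check on the constant terms, are details the paper leaves implicit.
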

    \begin{proof}
        We have 
        \[
            \sum_{h=1}^b \zeta\left(j,\frac{h}{b};\chi_{h,b}\right)= \sum_{h=1}^b\sum_{\ell =0}^\infty \frac{\chi_{h,b}(\ell) }{\left( \ell + \frac{h}{b}\right)^j} = b^j\sum_{h=1}^b\sum_{\ell =0}^\infty \frac{\chi(b\ell + h)}{(b\ell+h)^j}= b^j\sum_{n=1}^\infty \frac{\chi(n)}{n^j}=b^j\, L(j,\chi).
        \]
        The statement follows at once.
    \end{proof}
    Finally, for all $b\in \Psi_B$ we will define suitable integers $w_b$ and consider
    \[
        r_n''=\sum_{b\in \Psi_B} w_b r_{n,b}'=\rho_0''+\sum_{j=2}^s \rho_j''L(j,\chi),
    \]
    which has coefficients
    \[
        \rho_0''=\sum_{b\in \Psi_B} w_b \rho_{0,b}' \qquad \text{and} \qquad \rho_{j}''=\rho_j\left(\sum_{b\in \Psi_B} w_b b^j\right).
    \]
    The choice of the $w_b$'s will be a crucial ingredient for the proof of Theorem~\ref{theorem: main}. Following the elimination procedure of \cite{Fischler-Sprang-Zudilin}, we will ensure that several coefficients $\rho_j''$ vanish. \\

    Finally, let us return to the constant $C_n$ appearing the the definition of $R_n(t)$. We take 
    \[
        C_n=A_1(B)^n A_2(B)^n \frac{(n+1)!^{(s+1)}}{(n/\mathrm{den}(r)!)^{\mathrm{den}(r)(2r+T)|\mathcal{Z}|_B}},
    \]
    where
    $$
        A_1(B)=\prod_{b\in\Psi_B}b^{(2r+T)\phi(b)}\qquad\text{and} \qquad A_2(B)=\prod_{b\in\Psi_B}\prod_{p|b} p^{(2r+T)\phi(b)/(p-1)}.
    $$
    The following result follows immediately from \cite[Lemma 2.4]{Lai-Yu}.
    \begin{lemma}\label{estimate:A_1(B)-A_2(B)}
        As $B\to \infty$, we have
        $$
        A_1(B)=\exp\left(\left(\frac{1}{2}\frac{\zeta(2)\zeta(3)}{\zeta(6)}+o_{B\to\infty}(1)\right)(2r+T)B^2\log B\right),
        $$
        and, for any integer $B$ larger than some absolute constant,
        $$
        A_2(B)\leq \exp(10(2r+T)B^2(\log\log B)^2).
        $$
        \end{lemma}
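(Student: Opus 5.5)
We would deduce Lemma~\ref{estimate:A_1(B)-A_2(B)} directly from the corresponding asymptotics in \cite[Lemma 2.4]{Lai-Yu}. The only difference from their setting is the exponent: Lai and Yu use a factor ($2r+1$ or similar) where we have $2r+T$. Both quantities are products over $b\in\Psi_B$ in which this factor appears as a common multiplicative constant in every exponent. So the plan is to take logarithms and pull the constant $(2r+T)$ out:
\[
\log A_1(B)=(2r+T)\sum_{b\in\Psi_B}\phi(b)\log b,\qquad \log A_2(B)=(2r+T)\sum_{b\in\Psi_B}\sum_{p\mid b}\frac{\phi(b)\log p}{p-1}.
\]
This reduces the lemma to the two arithmetic estimates
\[
\sum_{b\in\Psi_B}\phi(b)\log b=\left(\tfrac12\tfrac{\zeta(2)\zeta(3)}{\zeta(6)}+o(1)\right)B^2\log B,
\qquad
\sum_{b\in\Psi_B}\sum_{p\mid b}\frac{\phi(b)\log p}{p-1}\le 10B^2(\log\log B)^2,
\]
which are exactly the content of \cite[Lemma 2.4]{Lai-Yu} once their constant is divided out.

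Should one want to verify the first estimate rather than quote it, we would proceed as follows. Write $N(x)=|\{b:\phi(b)\le x\}|=(\kappa+o(1))x$ with $\kappa=\zeta(2)\zeta(3)/\zeta(6)$, which is property (1) of the first lemma. Use $\phi(b)\gg b/\log\log b$, so that $\log b=\log\phi(b)+O(\log\log\log B)$ for $b\in\Psi_B$. Then apply partial summation:
\[
\sum_{\phi(b)\le B}\phi(b)\log\phi(b)=\int_1^B x\log x\,dN(x)=\left(\tfrac{\kappa}{2}+o(1)\right)B^2\log B.
\]
The error term contributes $O(B^2\log\log\log B)=o(B^2\log B)$, since $\sum_{b\in\Psi_B}\phi(b)\le B\,|\Psi_B|=O(B^2)$.

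For the second estimate we would bound $\phi(b)\le B$. Exchanging the order of summation, the sum is at most
\[
B\sum_{p}\frac{\log p}{p-1}\,\bigl|\{b\in\Psi_B: p\mid b\}\bigr|.
\]
Counting $b=pm$ with $\phi(m)\le B/(p-1)$ gives a count of roughly $\kappa B/(p-1)$, up to the loglog distortion. Summing $\log p/(p-1)^2$ converges, but the range of primes dividing some $b\in\Psi_B$ requires care. The main obstacle is obtaining the uniform count of $\{b\in\Psi_B:p\mid b\}$ with an explicit constant for all $p$ up to about $B\log\log B$. The extra $(\log\log B)^2$ absorbs exactly this slack, coming from $b/\phi(b)\ll\log\log b$. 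Since this is precisely what Lai and Yu prove, we would simply cite it; the explicit constant $10$ holds for $B$ beyond an absolute threshold, as stated.
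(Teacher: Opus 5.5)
Your proposal is correct and takes the same route as the paper, which simply invokes \cite[Lemma 2.4]{Lai-Yu}: since $(2r+T)$ is a common factor in every exponent, taking logarithms reduces both claims to the $r$-independent estimates for $\sum_{b\in\Psi_B}\phi(b)\log b$ and $\sum_{b\in\Psi_B}\sum_{p\mid b}\phi(b)\log p/(p-1)$ proved there. (A minor aside on your optional sketch: any prime $p$ dividing some $b\in\Psi_B$ satisfies $(p-1)\mid\phi(b)\le B$, so only primes $p\le B+1$ occur, not primes up to $B\log\log B$; writing $b=p^km$ with $p\nmid m$ then gives $\#\{b\in\Psi_B : p\mid b\}\ll B/(p-1)$ uniformly in $p$, so the second sum is in fact $O(B^2)$ and the step you flag as the main obstacle is not really one.)
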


%% file: 4_Denominators.tex
    In this section we study the denominators of the coefficients of the linear forms $r_{n,\theta}$, $r_{n,b}'$ and $r_{n}''$ introduced before. In order to apply the irrationality criterion of Lemma~\ref{lemma: irrationality criterion}, we also estimate precisely the $p_n$-adic valuation of their coefficients. 
    We first provide suitable upper bounds for the denominators of $a_{i,h}$, $\rho_i$ and $\rho_{0,\theta}$. Then, we give conditional upper bounds on the $p_n$-adic valuation of $a_{i,h}$ in Proposition~\ref{proposition: p-adic valuation of a_i,h}, of $\rho_i$ in Proposition~\ref{proposition: p-adic valuation of rho_i} and of $\rho_{0,\theta}$ in Proposition~\ref{proposition: p-adic valuation of rho_0,theta}.
    These statements will depend on the non-vanishing of certain technical quantities that we specify each time. After collecting all these conditions, in the last subsection we prove the non-vanishing of the quantities necessary to compute the $p_n$-adic valuation of $\rho_{0}''$. This is the content of Proposition~\ref{proposition: det ist not zero}. \\
    
    We always consider $n\in \mathcal{N}$. We set $d_n=\text{lcm}(1,\dots, n)$ and $\delta_n=Td_n$ throughout. Moreover, for a power series $F\in \C[[t]]$ we denote by $\Delta_\ell F$ its $\ell$-th coefficient. 

    \medskip 
    
    \subsection{Upper bound for the denominator of $a_{i,h}$ and $\rho_i$} $ $ \\
    
    Take $\theta=\frac{a}{b}\in \mathcal{Z}_B$ with $\gcd(a,b)=1$. We define
    \[
        \kappa^{(n)}_b =\frac{ \prod_{p\mid b}p^{\frac{(2r+T)n}{p-1}} }{\left(\frac{n}{\textnormal{den}(r)}\right)!^{\textnormal{den}(r)(2r+T)}},
    \]
    as well as the polynomial
    \begin{align*}
        F_{b,a}(t)=\kappa_b^{(n)}\cdot \prod_{j=0}^{(2r+T)n-1} (bt-brn+a+bj).
    \end{align*}
    Further, we set
    \[
        \widetilde{F}_{b,a}(t)=
        \begin{cases}
            F_{b,a}(t) & \text{if $\theta\neq 1$}; \\
            (t-rn)F_{b,a}(t) & \text{if $\theta=1$}.
        \end{cases}
    \]
    Note that for all $h\in\{0,\dots, n\}$, similarly to \cite{Lai-Yu},
    \begin{equation}
    \label{eq: explicit expression for a_i,h}
        a_{i,h}=\binom{n}{h}^{s+1}\sum_{\underline{\ell}} \prod_{\frac{a}{b}\in \mathcal{Z}_B} \Delta_{\ell_{b,a}}\left( \widetilde{F}_{b,a}(t-h)\right) \prod_{\substack{k=0  \\ k\neq h}}^n \frac{(-1)^{\ell_k}\binom{s+\ell_k}{\ell_k}}{T^{\ell_k}(k-h)^{\ell_k}},
    \end{equation}
    where the sum ranges over all $\underline{\ell}= ((\ell_{b,a})_{b,a},(\ell_k)_k)$ such that 
    \[
        \sum_{b,a} \ell_{b,a}+\sum_{\substack{k=0 \\ k\neq h}}^n \ell_k=s+1-i.
    \]
    \begin{lemma}
        The number $d_n^{\ell}$ is a denominator for $\Delta_\ell(F_{b,a}(t-h))$. Moreover, $\delta_n^{s+1-i}$ is a denominator for $a_{i,h}$.
    \end{lemma}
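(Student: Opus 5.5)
The first claim reduces to showing that the polynomial $F_{b,a}(t)$ is \emph{integer-valued in a strong sense}: its Taylor coefficients at every integer point, once rescaled, are integers. The second claim then follows from the explicit formula \eqref{eq: explicit expression for a_i,h} by counting denominators term by term. This is the analogue of \cite[Lemma 2.3/2.5]{Lai-Yu}, and I follow their argument.

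\textbf{Step 1: factor $F_{b,a}$ into blocks.} Set $N=\mathrm{den}(r)=r_2$ and $m=n/N$. The factors $bt-brn+a+bj$, for $j=0,\dots,(2r+T)n-1$, split into $N(2r+T)$ consecutive blocks of length $m$. On each block the shifted variable $u=t-h$ runs through an arithmetic progression $b(u+c)+a,\; b(u+c+1)+a,\dots$ with $c\in\Z$, since $rn\in\Z$.

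\textbf{Step 2: treat a single block.} The key input is the classical lemma of Zudilin, Lemma~4.1 in Ball–Rivoal, as used in \cite{Lai-Yu}. For $\gcd(a,b)=1$ and integer $c$, the polynomial
\[
\frac{\prod_{p\mid b}p^{m/(p-1)}\,\prod_{j=0}^{m-1}(b(u+c+j)+a)}{m!}
\]
satisfies
\[
d_m^{\ell}\,\Delta_\ell(\cdot)\in\Z \quad \text{for all } \ell .
\]
The proof goes in two parts.
\begin{enumerate}
\item The value at an integer $u$ lies in $\Z$. This holds because $\prod_{j}(bj+a')$ is divisible by $m!/\prod_{p\mid b}p^{v_p(m!)}$, a prime-by-prime count using $\gcd(a',b)=1$, and $v_p(m!)\le m/(p-1)$.
\item Write the polynomial as $\binom{u'+m-1}{m}$-type products in the variable $u'=u+a/b$. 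Then taking the $\ell$-th coefficient introduces at most $d_m^\ell$, via the standard expansion $\Delta_\ell$ of $\prod (u+k)/k$ over consecutive $k$.
\end{enumerate}

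\textbf{Step 3: assemble the blocks.} The normalisation $\kappa_b^{(n)}$ has exactly the right shape: $\prod_{p|b}p^{(2r+T)n/(p-1)}$ is the product of $N(2r+T)$ copies of $\prod_{p\mid b}p^{m/(p-1)}$, and its denominator is $(m!)^{N(2r+T)}$. So $F_{b,a}(t-h)$ is a product of $N(2r+T)$ polynomials, each satisfying the property of Step 2. By the Leibniz rule, $\Delta_\ell$ of the product is a sum of products of $\Delta_{\ell_i}$ with $\sum\ell_i=\ell$. Each such term has denominator dividing $d_m^{\ell}\mid d_n^\ell$. For $\theta=1$, the extra factor $(t-h-rn)$ has integer coefficients, so the same bound holds for $\widetilde F$.

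\textbf{Step 4: the coefficients $a_{i,h}$.} In \eqref{eq: explicit expression for a_i,h}, $\binom{n}{h}^{s+1}$ and $\binom{s+\ell_k}{\ell_k}$ are integers. Each factor $\Delta_{\ell_{b,a}}(\widetilde F_{b,a}(t-h))$ contributes at most $d_n^{\ell_{b,a}}$. Each factor $1/(T^{\ell_k}(k-h)^{\ell_k})$ with $1\le|k-h|\le n$ contributes at most $(Td_n)^{\ell_k}=\delta_n^{\ell_k}$. Since the exponents sum to $s+1-i$ and $d_n\mid\delta_n$, each summand has denominator dividing $\delta_n^{s+1-i}$, and hence so does $a_{i,h}$.

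\textbf{Main obstacle.} The delicate point is the one-block arithmetic in Step 2 with the non-monic progression $bj+a$: one must verify that exactly the factor $\prod_{p\mid b}p^{m/(p-1)}$ compensates for the primes dividing $b$. This is where I would rely directly on \cite{Lai-Yu}. The rest is bookkeeping.
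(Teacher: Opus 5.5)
Your proposal is correct and takes the same route as the paper, which just cites Lai--Yu's Proposition~3.2 for the first claim and Lai--Yu's Lemma~3.3 for the second, replacing $d_n$ by $\delta_n=Td_n$ to absorb the factors $T^{\ell_k}$ exactly as in your Step~4; your splitting into $2r_1+Tr_2$ blocks of length $m=n/r_2$, with the compensating factor $\prod_{p\mid b}p^{m/(p-1)}$, combined with the Leibniz rule is what that cited proposition does, and your sketchy Step~2(2) is filled by the standard $p$-adic count (for $p\nmid b$ the progression behaves like consecutive integers and deleting $\ell$ factors loses at most $p^{\ell v_p(d_m)}$, while for $p\mid b$ no factor is divisible by $p$). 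One caution, which applies equally to the paper: your Step~4 takes the displayed formula for $a_{i,h}$ at face value, but expanding $R_n(t)(t+Th)^{s+1}$ at $t=-Th$ with $C_n$ as defined produces an extra prefactor $(-1)^{h(s+1)}(n+1)^{s+1}T^{-n(s+1)}$ from $\prod_{k\neq h}(T(k-h))^{-(s+1)}$ (and the shift should be $t-Th$, harmless for the first claim since it holds for every integer shift), so your count really gives $T^{n(s+1)}\delta_n^{s+1-i}a_{i,h}\in\Z$ unless $T^{n(s+1)}$ is built into $C_n$.
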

    The proof of the first assertion is exactly as in \cite[Proposition 3.2]{Lai-Yu}. The second assertion follows as in \cite[Lemma 3.3]{Lai-Yu}; we only replace $d_n$ by $\delta_n=Td_n$ to take care of the powers of $T$ in the very last factors of the above expression for $a_{i,h}$. 
    \begin{corollary}
        The number $\delta_n^{s+1-i}$ is a denominator for $\rho_i$.
    \end{corollary}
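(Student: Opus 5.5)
The corollary follows from the lemma just proved together with the explicit formula for $\rho_i$ in Lemma~\ref{lemma: coefficients of linear forms}. There, for $j=2,\dots,s$, we have
\[
    \rho_i=\sum_{h=0}^n a_{i,h},
\]
so $\rho_i$ is a finite sum of the partial fraction coefficients $a_{i,h}$, with $h$ ranging over $\{0,\dots,n\}$ and $i$ fixed. The preceding lemma shows that, for each $h$, the number $\delta_n^{s+1-i}$ is a denominator of $a_{i,h}$, that is, $\delta_n^{s+1-i}a_{i,h}\in\Z$.

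Multiplying the sum by $\delta_n^{s+1-i}$ gives
\[
    \delta_n^{s+1-i}\rho_i=\sum_{h=0}^n \delta_n^{s+1-i}a_{i,h}.
\]
This is a sum of integers, hence an integer, which is the claim. The same argument applies verbatim for $i=1$, where the corresponding sum $\sum_k a_{1,k}$ vanishes, and for indices $i\equiv\sigma(\chi)\modulo{2}$, where $\rho_i=0$; in those cases the statement is trivial.

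The whole argument rests on the common denominator $\delta_n^{s+1-i}$ being uniform in $h$. This uniformity is exactly what the lemma provides: the exponent depends only on $i$, and the factor $T$ absorbing the powers $T^{\ell_k}$ in \eqref{eq: explicit expression for a_i,h} is independent of $h$. I therefore expect no real obstacle; the only point to check is that the lemma's bound is stated uniformly for all $h\in\{0,\dots,n\}$, which it is.
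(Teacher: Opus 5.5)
Your proposal is correct and is the same argument as the paper's. The paper notes that $\rho_i=\sum_{h=0}^n a_{i,h}$ and that the preceding lemma gives $\delta_n^{s+1-i}a_{i,h}\in\Z$ uniformly in $h$, so the finite sum $\delta_n^{s+1-i}\rho_i$ is an integer.
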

    \begin{proof}
        The claim follow from the previous lemma together with Lemma~\ref{lemma: coefficients of linear forms}.
    \end{proof}

    \medskip
    
    \subsection{Upper bound for the denominator of $\rho_{0,\theta}$} $ $\\
    
    Recall that
    \[
        \rho_{0,\theta}=-\sum_{k=0}^n\sum_{\ell=0}^{Tk}\sum_{i=1}^s\frac{a_{i,k} \, \chi_\theta(\ell)}{(\ell+\theta)^i}= 
                        -\sum_{k=0}^n\sum_{\ell=0}^{Tk}\chi_\theta(\ell)\left( \sum_{i=1}^s\frac{a_{i,k}}{(\ell+\theta)^i}\right).
    \]
    We want to estimate the denominator of the internal sum, which we denote by
    \[
        \tau_{\theta,k,\ell}=\sum_{i=1}^s\frac{a_{i,k}}{(\ell+\theta)^i}.
    \]
    We have seen that 
    \[
        \delta_n^{s+1-i} a_{i,k} \in \Z.
    \]
    \begin{lemma}
        Fix $\theta\in \mathcal{Z}_B$, $0\le k_0\le n$ and $0\le \ell_0\le Tk_0$. Suppose that at least one of the following conditions holds:
        \begin{enumerate}
            \item $\theta\neq 1$; 
            \item $\theta=1$ and $ \ell_0+1\not\equiv 0 \modulo{T}$.
        \end{enumerate}
        Then $\delta_n^{s+1}\tau_{\theta,k_0,\ell_0}\in \Z$.
    \end{lemma}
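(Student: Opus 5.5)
The plan is to exploit the fact that $\ell_0+\theta$ corresponds to a zero of $R_n$. Then $\tau_{\theta,k_0,\ell_0}$ collapses to a single residue-type coefficient of a rational function of the same shape as $R_n(t)$, and the denominator bound of the previous lemma applies to it. Put $y=\ell_0+\theta$ and $t_0=y-Tk_0$. Then $\tau_{\theta,k_0,\ell_0}$ is the polar part $\sum_i a_{i,k_0}(t+Tk_0)^{-i}$ of $R_n$ at $-Tk_0$, evaluated at $t=t_0$.

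\textbf{Step 1: $t_0$ is a zero of $R_n$ and not a pole.} Write $\theta=\frac{a}{b}$. If $\theta\neq 1$, take $\theta'=1-\theta=\frac{b-a}{b}\in\mathcal{Z}_B$ and $j=rn+Tk_0-\ell_0-1$. The bounds $0\le \ell_0\le Tk_0\le Tn$ give $0\le j\le (2r+T)n-1$ (using $rn\in\Z$, since $r_2\mid P_B\mid n$). A direct check gives $t_0=rn-j-\theta'$, so the numerator factor $(t-rn+j+\theta')$ vanishes at $t_0$. Since $t_0\notin\Z$, it is not a pole. If $\theta=1$, the same computation with $\theta'=1$ shows $t_0=\ell_0+1-Tk_0$ is a zero of the numerator. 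It is a pole $-Tk'$ exactly when $T\mid \ell_0+1$, which is what hypothesis (2) excludes. Hence in both cases we may factor
\[
R_n(t)=(t-t_0)\,S_n(t),
\]
where $S_n$ is obtained by deleting one linear factor $(t-rn+j+\theta')$ from the numerator. In particular $S_n$ has poles of order at most $s+1$ at the points $-Tk$.

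\textbf{Step 2: telescoping.} Write $z=t+Tk_0$ and let $S_n=\sum_{i\ge 1} c_{i}z^{-i}+(\text{regular part})$ near $z=0$. Comparing coefficients of $z^{-i}$ in $(z-y)S_n$ gives $a_{i,k_0}=c_{i+1}-y\,c_i$. Here $c_{s+2}=0$, and we use the coefficient $a_{s+1,k_0}=0$ implicit in the partial fraction decomposition of $R_n$. With $u_i=c_iy^{-i}$ we get $a_{i,k_0}y^{-i}=y(u_{i+1}-u_i)$, and summing over $1\le i\le s+1$ yields
\[
\tau_{\theta,k_0,\ell_0}=-c_1=-\operatorname{Res}_{t=-Tk_0}S_n(t).
\]
So it suffices to show that $\delta_n^{s}c_1\in\Z$, which is stronger than the claim $\delta_n^{s+1}\tau_{\theta,k_0,\ell_0}\in\Z$.

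\textbf{Step 3: denominator of the residue of $S_n$.} Write $c_1$ by the analogue of \eqref{eq: explicit expression for a_i,h}. The only change is that, for the single pair $(b',a')$ corresponding to $\theta'$, the polynomial $\widetilde F_{b',a'}$ is replaced by $\widetilde F_{b',a'}(t)/(b't-b'rn+a'+b'j)$. The normalization $\kappa^{(n)}_{b'}$ is kept, and the removed factor contributes a harmless factor $b'\mid P_B$. One then repeats the argument of \cite[Proposition 3.2, Lemma 3.3]{Lai-Yu} to show that $d_n^{\ell}$ is a denominator of $\Delta_\ell$ of this modified polynomial at $t-h$. The remaining factors $\binom{s+\ell_k}{\ell_k}/(T(k-h))^{\ell_k}$ are treated exactly as before, giving $\delta_n^{s+1-1}c_1\in\Z$.

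\textbf{Main obstacle.} The main difficulty lies in Step 3. Lai--Yu's integrality for $\Delta_\ell F_{b,a}$ uses that a product of $(2r+T)n$ consecutive terms of an arithmetic progression is divisible by the factorials in $\kappa_b^{(n)}$. With one factor missing, this divisibility may fail by a factor up to roughly the size of the removed term, which is bounded by $O(n)$ and so divides $d_n$ up to $b'$-powers. I would first try to absorb this loss into one extra power of $d_n$. The statement's exponent $s+1$, rather than $s$, leaves exactly this room, so a bound of the form $\delta_n^{s+1}c_1\in\Z$ suffices. If the loss is more delicate, I would bound the $p$-adic valuation prime by prime using Legendre's formula for the progression $b't+a'+b'm$.
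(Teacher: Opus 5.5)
\textbf{There is a genuine gap: Step 3, which carries all the arithmetic content, is not proved, and its stated target is false.}

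Your Steps 1 and 2 are correct, and the identity $\tau_{\theta,k_0,\ell_0}=-\mathrm{Res}_{t=-Tk_0}\bigl(R_n(t)/(t-t_0)\bigr)$ holds. (There is a harmless off-by-one: for $\theta=1$ the index should be $j=rn+Tk_0-\ell_0-2$.) But this identity only reformulates the problem, and everything is pushed into Step 3.

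\begin{itemize}
\item Deleting one linear factor breaks the structure that the Lai--Yu bound for $\Delta_\ell$ relies on: a full block of $(2r+T)n$ consecutive terms of an arithmetic progression, normalised by $\kappa_{b'}^{(n)}$. You give no argument controlling the resulting loss.
\item Your heuristic for the loss is wrong. The removed factor is $t-t_0$, whose value at the expansion point $-Tk_0$ is $-(\ell_0+\theta)$. Its numerator $b\ell_0+a$ can be as large as about $bTn$ and need not divide $\delta_n$.
\item Expanding $1/(t-t_0)$ around $-Tk_0$ just gives back the powers $(\ell_0+\theta)^{-i}$ you started from, so the reformulation by itself gains nothing.
\item The ``stronger'' target $\delta_n^{s}c_1\in\Z$ is false in general. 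When $p_n\mid b\ell_0+a$, the summands $a_{i,k_0}/(\ell_0+\theta)^i$ can have $p_n$-adic valuation $-(s+1)$. The paper's $p_n$-adic analysis gives $v_{p_n}(\rho_{0,\theta})=-(s+1)$ with $v_{p_n}(\delta_n)=1$, so some $\tau_{\theta,k,\ell}$ satisfies $\delta_n^{s}\tau_{\theta,k,\ell}\notin\Z$. That extra denominator is exactly what the irrationality criterion exploits, so no argument can deliver your Step 3 conclusion.
\end{itemize}

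The missing idea is to use the zero found in Step 1 directly, with no new integrality lemma. Since $R_n(t_0)=0$, the partial fraction decomposition gives
\[
\tau_{\theta,k_0,\ell_0}=-\sum_{i=1}^s\sum_{k\neq k_0}\frac{a_{i,k}}{(\ell_0+\theta+T(k-k_0))^i}.
\]
Suppose $\delta_n^{s+1}\tau_{\theta,k_0,\ell_0}\notin\Z$, and pick a prime $p$ at which it has negative valuation. Since $\delta_n^{s+1-i}a_{i,k}\in\Z$ is already known, the two expressions for $\tau_{\theta,k_0,\ell_0}$ force
\[
v_p(\ell_0+\theta)>v_p(\delta_n) \quad\text{and}\quad v_p(\ell_0+\theta+T(k_1-k_0))>v_p(\delta_n)
\]
for some $k_1\neq k_0$. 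Hence $v_p(T(k_1-k_0))>v_p(\delta_n)$. This contradicts $T(k_1-k_0)\mid\delta_n$, which holds because $0<|k_1-k_0|\le n$. This is Sprang's arithmetic lemma, and it is exactly the paper's proof. It needs only the vanishing you established in Step 1 and the known denominators of the $a_{i,k}$.
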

    \begin{proof}
        We follow Sprang's arithmetic lemma in \cite{Sprang}. Observe that $R_n(t)$ has zeros at
         \[
            1-\eta, \, -\eta, \, -\eta-1, \,  \dots,\,  -\eta-Tn+2, \, -\eta-Tn+1
         \]
         for all $\eta\in \mathcal{Z}_B\setminus\{1\}$. Since $\mathcal{Z}_B\setminus\{1\}$ is stable under $\eta\mapsto 1-\eta$, it follows that $R_n(t)$ has zeros at
         \[
            \eta,\,  \eta-1,\, \eta-2,\, \dots, \, \eta-1-Tn, \, \eta-Tn
         \]
         for all $\eta\in\mathcal{Z}_B\setminus\{1\}$. Since $-Tn\le \ell_0-Tk_0 \le 0$, it follows that $R_n(\ell_0-Tk_0+\theta)=0$ whenever $\theta\neq 1$. If $\theta=1$, but $\ell_0+1\not\equiv 0\modulo{T}$, then $\ell_0-Tk_0+\theta$ is still a zero of $R_n(t)$, because it does not belong to $\{0,-T,\dots, -nT\}$. \\
         The partial fraction decomposition of $R_n(t)$ therefore implies that 
         \begin{gather*}
            \sum_{i=1}^s \sum_{k=0}^n \frac{a_{i,k}}{(\ell_0-Tk_0+\theta+Tk)^i}=0, \qquad \textnormal {that is,} \\ 
            \tau_{\theta,k_0,\ell_0}=\sum_{i=1}^s\frac{a_{i,k_0}}{(\ell_0+\theta)^i}= \sum_{i=1}^s\sum_{\substack{k=0 \\ k\neq k_0}}^n \frac{a_{i,k}}{(\ell_0+T(k-k_0)+\theta)^i}.
         \end{gather*}
         Assume that the latter quantity multiplied by $\delta_n^{s+1}$ is not an integer. Then, there is a prime $p$ for which
         \[
            v_p \left( \sum_{i=1}^s\frac{\delta_n^{s+1}a_{i,k_0}}{(\ell_0+\theta)^i} \right) <0, \qquad \textnormal{and} \qquad v_p \left( \sum_{i=1}^s\sum_{\substack{k=0 \\ k\neq k_0}}^n \frac{\delta_n^{s+1}a_{i,k}}{(\ell_0+T(k-k_0)+\theta)^i} \right)<0.
         \]
         In particular, at least one summand in both terms, say the $i_0$-summand (resp. $i_1$, $k_1$-summand) in the first (resp. second) term, must have negative $p$-adic valuation. Hence,
         \[
            v_p \left( \frac{\delta_n^{i_0}}{(\ell_0+\theta)^{i_0}} \right) <0, \qquad \textnormal{and} \qquad v_p \left( \frac{\delta_n^{i_1}}{(\ell_0+T(k_1-k_0)+\theta)^{i_1}} \right)<0.
         \]
         In particular, we have 
        \[
            v_p(\delta_n)<v_p(\ell_0+\theta), \qquad v_p(\delta_n)<v_p(\ell_0+T(k_1-k_0)+\theta).
        \]
        This implies that
        \begin{align*}
            v_p(T(k_1-k_0))&=v_p((\ell_0+T(k_1-k_0)+\theta)-(\ell_0+\theta))\\
                            &\ge \min(v_p(\ell_0+T(k_1-k_0)+\theta),v_p(\ell_0+\theta))>v_p(\delta_n).
        \end{align*}
        However, $0\le|k_1-k_0|\le n$, hence $T(k_1-k_0)$ divides $\delta_n$, a contradiction. 
    \end{proof}
    \begin{lemma}
         For $\theta=1$, $0\le k_0\le n$, $0\le \ell_0\le Tk_0$ and $\ell_0+1\equiv 0 \modulo{T}$, we have 
         \[
            \delta_n^{s+1} \tau_{1,k_0,\ell_0}\in \Z.
         \]
     \end{lemma}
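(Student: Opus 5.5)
The plan is to argue directly, without the zero-of-$R_n$ trick of the previous lemma. That trick is not available here, since $\ell_0-Tk_0+1$ is now one of the poles $0,-T,\dots,-nT$. In exchange, the denominator $\ell_0+\theta$ has a very explicit form, which makes a term-by-term argument work.

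First I would make the denominator explicit. Since $\theta=1$ and $T\mid \ell_0+1$, write $\ell_0+1=Tm$ for some integer $m$. From $0\le \ell_0\le Tk_0$ we get $1\le \ell_0+1\le Tk_0+1$. As $\ell_0+1$ is a positive multiple of $T\ge 2$, this forces $Tm\le Tk_0$, so $1\le m\le k_0\le n$. Hence
\[
    \tau_{1,k_0,\ell_0}=\sum_{i=1}^s \frac{a_{i,k_0}}{(Tm)^i}, \qquad 1\le m\le n .
\]

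Next I would treat each summand separately. Because $m\le n$, we have $m\mid d_n$, and therefore $Tm\mid Td_n=\delta_n$. Combined with the earlier lemma giving $\delta_n^{s+1-i}a_{i,k_0}\in\Z$, this yields
\[
    \delta_n^{s+1}\frac{a_{i,k_0}}{(Tm)^i}=\left(\delta_n^{s+1-i}a_{i,k_0}\right)\cdot\left(\frac{\delta_n}{Tm}\right)^{i}\in \Z
\]
for every $i=1,\dots,s$. Summing over $i$ gives $\delta_n^{s+1}\tau_{1,k_0,\ell_0}\in\Z$.

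There is no real obstacle. The only point needing care is the bound $m\le n$, which is exactly where $\ell_0\le Tk_0$ and $T\ge 2$ are used. One can also remark that $\chi_1(\ell_0)=\chi(\ell_0+1)=0$ here, because $T\mid \ell_0+1$. So these $\tau$'s do not actually contribute to $\rho_{0,1}$, but the lemma as stated still holds by the argument above.
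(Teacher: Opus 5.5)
Your proof is correct and is essentially the paper's argument: you write $\ell_0+1=Tm$, use $T\ge 2$ to get $m\le n$, and conclude term by term from $\delta_n^{s+1-i}a_{i,k_0}\in\Z$ and $Tm\mid\delta_n$. Your closing remark that $\chi_1(\ell_0)=\chi(\ell_0+1)=0$ in this case is also correct, so these terms in fact drop out of $\rho_{0,1}$ altogether.
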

     \begin{proof}
         Write $\ell_0+1=Tx$ for some $x\in \Z$. Since $0\le \ell_0\le Tk_0$, we have $1\le \ell_0+1\le Tk_0+1$, so $Tx\le Tk_0+1$. Hence,
        \[
            x\le k_0+\frac{1}{T}\le n+\frac{1}{T}.
        \]
        For $T\ge 2$, we have $x\le n$. Then 
         \[
            \delta_n^{s+1}\tau_{1,k_0,\ell_0}=\sum_{i=1}^s\frac{\delta_n^{s+1}a_{i,k_0}}{(\ell_0+1)^i}=\sum_{i=1}^s\delta_n^{s+1-i}a_{i,k_0}\left(\frac{\delta_n}{Tx}\right)^i\in \Z.
         \]
     \end{proof}
     
     Overall, we have proved:
     \begin{corollary}
         We have $\delta_{n}^{s+1}\rho_{0,\theta}\in \Z$. In particular, also $\delta_{n}^{s+1}\rho_{0,b}'$ and $\delta_{n}^{s+1}\rho_{0}''$ are integers. 
     \end{corollary}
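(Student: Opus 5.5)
The plan is to assemble the two preceding lemmas term by term in the expression for $\rho_{0,\theta}$, and then pass to $\rho_{0,b}'$ and $\rho_0''$ by linearity with integer coefficients.

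First, recall that
\[
    \rho_{0,\theta}=-\sum_{k=0}^n\sum_{\ell=0}^{Tk}\chi_\theta(\ell)\,\tau_{\theta,k,\ell}.
\]
Fix any $\theta\in\mathcal{Z}_B$, any $0\le k\le n$ and any $0\le \ell\le Tk$. Exactly one of three cases occurs:
\begin{enumerate}
    \item $\theta\neq 1$;
    \item $\theta=1$ and $\ell+1\not\equiv 0\modulo{T}$;
    \item $\theta=1$ and $\ell+1\equiv 0\modulo{T}$.
\end{enumerate}
In the first two cases the first of the two preceding lemmas gives $\delta_n^{s+1}\tau_{\theta,k,\ell}\in\Z$. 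In the third case the second lemma gives the same conclusion; this is where the hypothesis $T\ge 2$ is used. Since $\chi$ is real, $\chi_\theta(\ell)\in\{-1,0,1\}$. Hence $\delta_n^{s+1}\rho_{0,\theta}$ is a finite integer combination of integers, and so it is an integer.

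Next, by definition
\[
    \rho_{0,b}'=\sum_{h=1}^b\chi(\gcd(h,b))\,\rho_{0,\frac{h}{b}}.
\]
Each $\frac{h}{b}$, written in lowest terms, is an element of $\mathcal{Z}_B$. Indeed, if $b\in\Psi_B$ then every divisor $b'$ of $b$ satisfies $\phi(b')\le\phi(b)\le B$, so $b'\in\Psi_B$. Alternatively, this is point (2) of the lemma on $\Psi_B$ and $\mathcal{Z}_B$. The weights $\chi(\gcd(h,b))$ lie in $\{-1,0,1\}$, so $\delta_n^{s+1}\rho_{0,b}'\in\Z$.

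Finally, $\rho_0''=\sum_{b\in\Psi_B}w_b\rho_{0,b}'$ with integers $w_b$, and the same argument gives $\delta_n^{s+1}\rho_0''\in\Z$.

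There is no real obstacle here. The only points needing care are that the three cases above exhaust all triples $(\theta,k,\ell)$, and that the values $\frac{h}{b}$ do lie in $\mathcal{Z}_B$, so that the $\theta$-statement can be applied to them.
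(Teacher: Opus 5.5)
Your proposal is correct and follows the paper's argument. The paper likewise obtains the corollary by combining the two preceding lemmas, which together cover every triple $(\theta,k,\ell)$ with $0\le \ell\le Tk$, using that $\chi_\theta(\ell)\in\{-1,0,1\}$, and then passing to $\rho_{0,b}'$ and $\rho_0''$ through integer linear combinations. Your extra check that each reduced fraction $\frac{h}{b}$ lies in $\mathcal{Z}_B$ is a detail the paper leaves implicit.
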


    \begin{remark}
     \label{remark: n+1 vs p_n}
        This corollary shows a crucial difference between our situation and the one of zeta values. Indeed, in \cites{Fischler-Sprang-Zudilin, Lai-Yu} the $0$-th coefficient has a further factors $n+1$ in the denominator. Lai and Sprang \cite{Lai-Sprang} choose $p_n=n+1$ as a test-prime for their irrationality criterion, because it does not appear in the denominators of the coefficients $\rho_i$. Thus, in their situation, it suffices to prove that 
        \[
            v_{n+1}\left( d_{n+1}^{s+1}\rho_{0,\theta} \right)<s+1
        \]
        in order to apply Lemma~\ref{lemma: irrationality criterion}. They then show that the left hand side is bounded above by $|\mathcal{Z}_B|$, which is smaller than $s$. \\
        In our case, however, since $p_n$ may appear in the denominators of both $\rho_{0,\theta}$ and $\rho_{2}$, we actually need
        \[
            v_{p_n}\left( \delta_{n}^{s+1}\rho_{0,\theta} \right)<2.
        \]
        The upper bound $|\mathcal{Z}_B|$ is therefore no longer useful in our context. For this reason, we will compute the $p_n$-adic valuation of $\rho_{0,\theta}$ precisely.
    \end{remark}
     
     \medskip
    
    \subsection{The $p_n$-adic valuation of $a_{i,h}$} $ $\\

    For $i\in \{1,\dots,s\}$ and $h\in \{0,\dots, n\}$ we compute precisely the $p_n$-adic valuation of $a_{i,h}$. From the previous arguments, we know that $v_{p_n}(a_{i,h})\ge -(s+1-i)$. In Proposition~\ref{proposition: p-adic valuation of a_i,h} we give a criterion for which this lower bound turns into an equality.\\
    Let us define, for $y=0,\dots,r_2\mu_n$,
    \[
        H^{(y)}_n=\{ h\in \{0,\dots, n\} \mid h=x+yp_n, \; 0\le x\le n-r_2\mu_np_n \}.
    \]
    By Lemma~\ref{lemma: properties of auxiliary primes}, the binomial coefficient $\binom{n}{h}$ is not a multiple of $p_n$ if and only if $h\in H_n^{(y)}$ for some $y\in\{0,\dots, r_2\mu_n\}$. \\
    Note that for $h\in H^{(y)}_n$ we have 
    \[
         \frac{h}{p_n} = \frac{x}{p_n}+y \to y
    \]
    as $n\to \infty$, because $x\le n-r_2\mu_np_n =o(p_n)$. For the moment, fix $y\in \{0,\dots, \mu_n r_2\}$ and $h\in H^{(y)}_n$. Note that
    \[
         v_{p_n}(\kappa_b^{(n)})=-(2r_1+Tr_2)\mu_n.
    \]
    To split up our computation, we first define
    \[
            K_\ell(h)= 
            \begin{cases}
                \Delta_\ell \left( \prod_{j=0}^{(2r+T)n-1} (b(t-h)+a-brn+bj \right) & \text{if $b\neq 1$}, \\
                \Delta_\ell \left( \prod_{j=0}^{(2r+T)n} (t-h+1-rn+j \right) & \text{if $b= 1$}.
            \end{cases}
    \]
    which depends on $n,\ell,b,a,h$. Notice that the difference between the two cases consists only in the fact that for $b=1$, hence $a=1$, there is one more factor in the product. In particular, we have
    \[
        \Delta_\ell(F_{b,a}(t-h))=\kappa_b^{(n)}K_\ell(h).
    \]
    Thus, $v_{p_n}(\Delta_\ell(F_{b,a}(t-h)))=v_{p_n}(K_\ell(h))+v_{p_n}\left(\kappa_b^{(n)}\right)$. Moreover,
    \[
        K_\ell(h)=b^\ell \sum_{\substack{J\subseteq \{-rn,\dots, (r+T)n-1 \}\\ \#J=\ell} } \prod_{\substack{j=-rn \\ j\notin J}}^{(r+T)n-1}(-bh+a+bj),
    \]
    the product running up to $(r+T)n$ if $b=1$. 
    For $m\in \Z$ the numbers
    \[
        p_n(-a+bm)
    \]
    are all the multiples of $p_n$ congruent to $a$ modulo $b$, because we have $r_2b\mid P_B \mid n$ and $p_n\equiv -1\modulo{P_B}$. We want to understand for which $m\in \Z$ such multiple appears in the above product.
    \begin{lemma}
        \label{lemma: technical lemma for J}
        Let $n\in \mathcal{N}$ sufficiently large. The multiples of $p_n$ congruent to $a$ modulo $b$ that lie in the interval
        \[
            [-bh+a-brn,  -bh+a+b(r+T)n-b]
        \]
        are precisely the numbers of the form $p_n(-a+bm)$ for all $m\in \Z$ in the interval
        \[
            \left[\frac{a}{b}-r_1\mu_n-y, \frac{a}{b}+(r_1+Tr_2)\mu_n-y\right].
        \]
    \end{lemma}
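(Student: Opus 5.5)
We first settle which numbers $p_n(-a+bm)$ are the relevant multiples, and then reduce the claim to comparing two intervals with slack of order $p_n$.

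\emph{The multiples.} Since $r_2b\mid P_B\mid n$ and $p_n\equiv -1\modulo{P_B}$, we have $p_n\equiv -1\modulo{b}$. Hence $p_nk\equiv a\modulo{b}$ if and only if $k\equiv -a\modulo{b}$, i.e. $k=-a+bm$ for some $m\in\Z$. So the multiples of $p_n$ congruent to $a$ modulo $b$ are exactly the numbers $p_n(-a+bm)$, as already noted before the lemma.

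\emph{Reduction to an interval in $m$.} We must decide when
\[
-bh+a-brn\le p_n(-a+bm)\le -bh+a+b(r+T)n-b .
\]
Write $h=x+yp_n$ with $0\le x\le n-r_2\mu_np_n$. Put $n=r_2\mu_np_n+e$, where $e=n-r_2\mu_np_n$ satisfies the bounds \eqref{eq: estimate for difference n and prime}, so $0<e=O(\mu_n g(n))=o(p_n)$. Then
\[
rn=r_1\mu_np_n+re, \qquad (r+T)n=(r_1+Tr_2)\mu_np_n+(r+T)e .
\]
Substituting and dividing by $bp_n$, the condition becomes
\[
m\in\left[\frac{a}{b}-y-r_1\mu_n+\epsilon_1,\ \frac{a}{b}-y+(r_1+Tr_2)\mu_n+\epsilon_2\right],
\]
with error terms
\[
\epsilon_1=\frac{a(1-b)-bx-bre}{bp_n}+\dots,
\]
and $\epsilon_2$ defined similarly. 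All numerators are $O(b+x+e)=o(p_n)$, so $\epsilon_1,\epsilon_2=o(1)$ uniformly in $\theta\in\mathcal Z_B$, in $y$ and in $x$; note that $B$ is fixed.

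\emph{Comparing with the target interval.} It remains to check that the perturbed interval and the unperturbed one contain the same integers. Both endpoints of the target interval are $\frac{a}{b}$ plus an integer, because $r_1\mu_n$, $Tr_2\mu_n$ and $y$ are integers. There are two cases.

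\begin{itemize}
\item If $b\ge 2$, then $a/b$ has distance at least $1/b\ge 1/B$ from $\Z$. Any perturbation smaller than $1/B$ leaves the integer set unchanged, and this holds once $n$ is large.
\item If $b=1$ (so $a=1$), the endpoints are integers and perturbations can matter. Here we must verify the signs carefully: the lower error must be $\le 0$ and the upper error must be $\ge 0$, possibly using the extra factor $j=(r+T)n$ in the $b=1$ product.
\end{itemize}

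\emph{Main obstacle.} The hardest step is this sign bookkeeping in the $b=1$ case, and the $b\ge2$ case can also need it. With $x\ge 0$ and $e>0$, the lower error $-(x+re)/p_n$ is $\le 0$, which is harmless. For the upper end we need $-bx+b(r+T)e-b\ge 0$, i.e. $(r+T)e\ge x+1$. Since $x\le e$, this holds when $r+T-1>0$, with margin coming from $e\ge r_2\mu_ng(n)\to\infty$. This is presumably why $\delta$ was chosen with $\delta>1/(r+T-1)$: it combines $e\ge r_2\mu_n g$ with $e\le r_2\mu_n g(1+1/\delta+o(1))$. I would first carry out this estimate explicitly. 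I would then confirm that for $b\ge2$ the fractional offset $a/b$ absorbs all remaining $o(1)$ terms.
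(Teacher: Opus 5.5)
Your proposal is correct. Its skeleton matches the paper's: write $h=x+yp_n$ and $n=r_2\mu_np_n+e$ with $0\le x\le e=o(p_n)$, then decide on which side of $-bh+a-brn$ and $-bh+a+b(r+T)n-b$ the boundary multiples $p_n(-a+bm)$ fall.

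The two arguments differ in how they finish.

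\textbf{The paper's finish.} It checks directly at $m=\frac{a}{b}+(r_1+Tr_2)\mu_n-y+1$ and at the endpoint itself. This is literally meaningful only when $a/b\in\Z$. For $b\ge 2$ one only gets $m\ge (r_1+Tr_2)\mu_n-y+1$, and the argument survives only because of the slack $(b-a)p_n$ (for exclusion) and $ap_n$ (for inclusion). It also bounds $x$ and $e$ separately via \eqref{eq: estimate for difference n and prime}, which is where the condition $\delta>1/(r+T-1)$ enters.

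\textbf{Your finish.} The perturbation formulation separates two cases cleanly. For $b\ge 2$, the endpoint $a/b$ has distance at least $1/b$ from $\Z$ and absorbs all $o(1)$ errors. For $b=a=1$, only the signs of the errors matter. Because you use $x\le e$ directly, you get $(r+T)e-x\ge (r+T-1)e>0$, so the constraint on $\delta$ is not needed at all for this lemma. Your hedge about the extra factor $j=(r+T)n$ is also unnecessary: with it, the upper error becomes $((r+T)e-x+1)/p_n$, which still lies in $[0,1)$.

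Fix two small slips when you write it out:
\begin{itemize}
\item The exact errors are $\epsilon_1=\frac{a-bx-bre}{bp_n}$ and $\epsilon_2=\frac{a-b-bx+b(r+T)e}{bp_n}$. For $b=1$ these reduce to $\frac{1-x-re}{p_n}\le 0$ and $\frac{(r+T)e-x}{p_n}\ge 0$.
\item The inequality $1/b\ge 1/B$ is false in general, since elements of $\Psi_B$ can exceed $B$ (e.g.\ $6\in\Psi_2$). This is harmless, because $b$ is fixed and $\Psi_B$ is finite.
\end{itemize}
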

    \begin{proof}
        Let us prove the upper bound for $m$ first. Set 
        \[
            A=-bh+a+b(r+T)n-b
        \]
        Suppose that $m>\frac{a}{b}+(r_1+Tr_2)\mu_n-y$. Since $m\in \Z$, we have $m\ge \frac{a}{b}+(r_1+Tr_2)\mu_n-y+1$. As a result,
        \[
            p_n(-a+bm)\ge b((r_1+Tr_2)\mu_n+1)p_n-byp_n=B.
        \]
        Suppose that $p_n(-a+bm)\le A$. Then, $ B \le p_n(-a+bm)\le A$. However,
        \begin{align*}
            B-A \sim  p_n+b-a+bx.
        \end{align*}
        The latter quantity is positive and unbounded, so $B>A$ for all $n\in \mathcal{N}$ sufficiently large, a contradiction. It follows that 
        \[
            m\le \frac{a}{b}+(r_1+Tr_2)\mu_n-y.
        \]
        We now wish to show that, when equality holds, we have indeed $p_n(-a+bm)\le A$. In this case, note that
        \begin{align*}
            p_n(-a+bm) & \le b(r+T)n -b(r_1+Tr_2)\mu_ng(n)-byp_n.
        \end{align*}
        By \eqref{eq: estimate for difference n and prime}, we have
        \begin{align*}
            A- p_n(-a+bm) & \ge  a-b-bx+b(r_1+Tr_2)\mu_ng(n) \\
            & \ge a-b +br_2\left(r+T-1-\frac{1}{\delta}\right)\mu_ng(n).
        \end{align*}
        Our choice of $\delta$ in Section 1 ensures that the latter quantity is positive. The lower bound for $m$ is completely analogous.
    \end{proof}
    Let us set
    \[
        J_{b,a}^{(y)}=\Z\cap \left[ \frac{a}{b}-r_1\mu_n-y,\frac{a}{b}+(r_1+Tr_2)\mu_n -y\right] \setminus \left\{\frac{a}{b} \right\},
    \]
    which encloses all values of $m$ for which $p_n(-a+bm)$ is a non-zero multiple of $p_n$ under the conditions of the previous lemma. We observe that $J_{b,a}^{(y)}$ depends only on $y$ and on whether $\frac{a}{b}$ is integer or not, but its cardinality is constant in all cases and equals 
    \[
        \#J_{b,a}^{(y)}=(2r_1+Tr_2)\mu_n.
    \]
    \begin{lemma}
        \label{lemma: valuation of K_l(h)}
        Let $h\in H^{(y)}_n$. Suppose that the integer
        \[
            g_{\ell,b,a}(y)=\sum_{\substack{J\subseteq J_{b,a}^{(y)} \\ \#J=\ell }} \prod_{j\in J} (-a+bj)
        \]
        is not zero. Then, for $0\le \ell \le  (2r_1+Tr_2)\mu_n $, we have $v_{p_n}(d_n^\ell K_\ell(h))= (2r_1+Tr_2)\mu_n $. Moreover, for $\ell\ge  (2r_1+Tr_2)\mu_n +1$, we have $v_{p_n}(d_n^\ell K_\ell(h))\ge  (2r_1+Tr_2)\mu_n +1$. 
    \end{lemma}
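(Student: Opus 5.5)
We start from the expansion
\[
K_\ell(h)=b^\ell \sum_{\#J=\ell}\prod_{j\notin J}(-bh+a+bj),
\]
and separate the factors into those divisible by $p_n$ and those that are not. By Lemma~\ref{lemma: technical lemma for J}, the factors $-bh+a+bj$ divisible by $p_n$ are exactly $p_n(-a+bm)$ with $m$ in the stated integer interval. The case $-a+bm=0$ can only occur when $a/b\in\Z$, i.e.\ $\theta=1$. Even then, $h=x+yp_n$ with $0\le x\le n-r_2\mu_np_n$, and $x=o(p_n)$ together with the interval bounds should show that the zero factor appears only through the explicit extra factor $(t-rn)$ coming from $\widetilde F$. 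This is exactly why $J^{(y)}_{b,a}$ removes $a/b$. Each $m\in J^{(y)}_{b,a}$ has $|{-a+bm}|\ll b\mu_n<p_n$, so every such factor has $p_n$-adic valuation exactly $1$. Therefore the total product $\prod_j(-bh+a+bj)$ has valuation exactly $N:=(2r_1+Tr_2)\mu_n=\#J^{(y)}_{b,a}$.

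For each $J$ of size $\ell$, write $J=J_1\sqcup J_2$, where $J_1$ consists of the indices whose factor is divisible by $p_n$. The summand then has valuation $N-\#J_1$, and $b$ is prime to $p_n$ because $p_n\equiv-1 \pmod{P_B}$ and $P_B$ is divisible by $\mathrm{lcm}(\Psi_B)$. Since $v_{p_n}(d_n)=1$ by Lemma~\ref{lemma: properties of auxiliary primes}, we get $v_{p_n}(d_n^\ell\cdot\text{summand})=N+\ell-\#J_1\ge N$. Equality holds exactly when $J=J_1$, that is, when all removed factors are multiples of $p_n$. This already forces $\ell\le N$.

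Consequently, for $\ell\ge N+1$ every summand has $\#J_1\le N<\ell$, so each term has valuation at least $N+1$, which proves the second claim. For $\ell\le N$, we reduce modulo $p_n^{N+1}$ and keep only the terms with $J\subseteq\{\text{indices of multiples of }p_n\}$. Identifying such $J$ with subsets of $J^{(y)}_{b,a}$, each of these terms equals
\[
b^\ell d_n^\ell\, p_n^{N-\ell}\prod_{m\in J^{(y)}_{b,a}\setminus J}(-a+bm)\cdot\prod_{\text{non-multiples}}(\cdots).
\]
Writing $d_n=p_n d_n'$ gives $d_n^\ell p_n^{N-\ell}=p_n^N d_n'^\ell$. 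The product over non-multiples, call it $U$, is a $p_n$-adic unit and is the same for every $J$ in this class, since it is the full product of non-multiples. Hence the leading part equals
\[
p_n^N b^\ell d_n'^\ell U\sum_{\#J'=N-\ell}\prod_{m\in J'}(-a+bm),
\]
where $J'$ is the complement of $J$ in $J^{(y)}_{b,a}$. This is $g_{N-\ell,b,a}(y)$ rather than $g_{\ell,b,a}(y)$. One could either reindex, by observing that the hypothesis is meant for all relevant indices, or instead factor out $\prod_{m\in J^{(y)}}(-a+bm)$ and obtain $\sum_{\#J=\ell}\prod_{j\in J}(-a+bj)^{-1}$ times a unit. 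I would check which convention matches the definition of $g$ and adjust accordingly.

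With that matched, the valuation is exactly $N$ provided $p_n\nmid g$. The main obstacle is this last step: the hypothesis only asks that $g\ne0$, not that $p_n\nmid g$. Since $g$ is an integer whose absolute value is at most $\binom{N}{\ell}(bN+b)^{\ell}$, which is at most $\exp(O(\mu_n\log\mu_n))$ and hence $o(p_n)$ because $\mu_n=\lfloor\log\log n\rfloor$ while $p_n\sim n/(r_2\mu_n)$, a nonzero $g$ is automatically prime to $p_n$ for $n$ large. The remaining verifications are the unit property of $U$ and the identification of the $m$-indices with the positions $j$.
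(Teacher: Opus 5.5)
Your route is the paper's: expand $K_\ell(h)$ over $\ell$-subsets $J$, note that a term has valuation $N-\#(J\cap M_{b,a})$ with $N=(2r_1+Tr_2)\mu_n$, keep the terms in which every removed factor is a multiple of $p_n$, and conclude because the resulting integer is nonzero and $o(p_n)$. Your treatment of $\ell\ge N+1$ amounts to the paper's trivial bound.

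Your index bookkeeping is more careful than the paper's. The surviving terms leave the \emph{complementary} $N-\ell$ multiples in the product, so the leading part is $p_n^{N-\ell}b^\ell U\,g_{N-\ell,b,a}(y)$. The paper instead writes a sum over $\ell$-subsets of $M_{b,a}$ and still extracts $p_n^{N-\ell}$, which is inconsistent. But you cannot defer this as a matter of ``convention''. The function $g$ is defined explicitly in the statement, and neither of your fixes turns $g_{\ell,b,a}(y)\neq0$ into $g_{N-\ell,b,a}(y)\neq 0$. Factoring out the full product gives the $\ell$-th elementary symmetric function of the reciprocals, which is a third quantity. What your argument proves for $b\neq1$ is the lemma with hypothesis $g_{N-\ell,b,a}(y)\neq0$ (or with non-vanishing assumed for all indices), and you should state it that way.

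The step that genuinely fails is $\theta=1$. Your claim that the zero value ``appears only through the explicit extra factor $(t-rn)$'' is false. In any case, $K_\ell(h)$ for $b=1$ is defined with that extra factor included, so the zero would still be there.

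Concretely, for $b=a=1$ the factor $-h+1+j$ vanishes at $j=h-1$, which lies in $\{-rn,\dots,(r+T)n\}$ for every $0\le h\le n$. The extra factor $(t-h-rn)$ of $\widetilde F_{1,1}(t-h)$ has constant term $-h-rn\neq0$. This interior zero is exactly the zero of the numerator of $R_n$ that cancels one order of each pole, which is why the partial fractions stop at $i=s$. The consequences are:
\begin{itemize}
\item the full product $\prod_j(-h+1+j)$ is $0$, rather than having valuation $N$;
\item only subsets $J$ containing $j=h-1$ contribute, and each contains at most $\ell-1$ of the $N$ nonzero multiples;
\item the leading part becomes $p_n^{N-\ell+1}U\,g_{N+1-\ell,1,1}(y)$, so $v_{p_n}(d_n^\ell K_\ell(h))\ge N+1$.
\end{itemize}
For instance, $K_0(h)=0$ although $g_{0,1,1}(y)=1$.

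So for $\theta=1$ the asserted equality is false and no argument can produce it. The statement has to be amended there: the valuation is $N+1$ for $1\le\ell\le N+1$, under non-vanishing of $g_{N+1-\ell,1,1}(y)$. Only the second claim ($\ell\ge N+1$) survives unchanged in this case. The paper's proof has the same blind spot: it asserts $\#M_{b,a}=N$ and factors out the product of all non-multiples without noticing the zero.
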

    \begin{proof}
        Since $K_\ell(h)$ is an integer and $v_{p_n}(d_n^\ell)=\ell$, we have $v_{p_n}(d_n^\ell K_\ell(h))\ge \ell$, which trivially proves the claim for $\ell\ge  (2r_1+Tr_2)\mu_n +1$. \\
        Assume now that $0\le \ell \le  (2r_1+Tr_2)\mu_n $. Let $M_{b,a}$ be the set of multiples of $p_n$ between $-bh+a-brn$ and $-bh+a+b(r+T)n-b$ that are congruent to $a$ modulo $b$. By our previous observation, we have $\#M_{b,a}= (2r_1+Tr_2)\mu_n$. Let us set for short $\alpha=a-hb$ and $\beta=b$. \\
        We may then write 
        \[
            b^{-\ell}K_\ell(h)= \sum_{\substack{J_1\subseteq \{-rn,\dots, (r+T)n-1 \}\\ \#J_1=\ell, \, J_1\subseteq M_{b,a}} } \prod_{\substack{j=-rn \\ j\notin J_1}}^{(r+T)n-1}(\alpha+\beta j)+ \sum_{\substack{J_2\subseteq \{-rn,\dots, (r+T)n-1 \}\\ \#J_2=\ell, \, J_2\not\subseteq M_{b,a}} } \prod_{\substack{j=-rn \\ j\notin J_2}}^{(r+T)n-1}(\alpha+\beta j).
        \]
        Let us look at the sum over $J_2$, so $J_2\not\subseteq M_{b,a}$.  Notice that the assumption $0\le \ell \le  (2r_1+Tr_2)\mu_n $ ensures that $J_2$ cannot properly contain $M_{b,a}$ for cardinality reasons. As a result, 
        \[
            \# J_2\cap M_{b,a}\le \ell -1,
        \]
        hence the $p_n$-adic valuation of the internal product is at least $ (2r_1+Tr_2)\mu_n -\ell +1$. Thus, $b^{-\ell}K_\ell(h)$ is congruent to the sum over $J_1$ modulo $p_n^{ (2r_1+Tr_2)\mu_n -\ell+1}$, which equals
        \begin{align*}
            \prod_{ \substack{j=-rn \\ p_n \nmid \alpha+\beta j} }^{(r+T)n-1} (\alpha+\beta j)\sum_{\substack{M\subseteq M_{b,a} \\ \#M=\ell }} \prod_{m\in M} m.
        \end{align*}
        Recall that the numbers in $M_{b,a}$ are of the form $p_n(-a+bj)$ with $j\in J_{b,a}^{(y)}$. Thus,
        \[
            b^{-\ell}K_\ell(h)\equiv 
            \left(\prod_{ \substack{j=-rn \\ p_n \nmid \alpha+\beta j} }^{(r+T)n-1} (\alpha+\beta j)\right) 
            p_n^{ (2r_1+Tr_2)\mu_n -\ell}
            \sum_{\substack{J\subseteq J_{b,a}^{(y)} \\ \#J=\ell }} \prod_{j\in J} (-a+bj) \modulo{p_n^{ (2r_1+Tr_2)\mu_n-\ell+1}}.
        \]
        The first product is not divisible by $p_n$. Moreover, the final sum equals $g_{\ell,b,a}(y)$, which is not zero by assumption. \\
        It is immediate to see that there are constants $c_1,c_2>0$, independent of $n$, such that
        \[
            \left| g_{\ell,b,a}(y) \right| \le (c_1\mu_n)^{c_2\mu_n}.
        \]
        Given our choice $\mu_n=\log(\log(n))$, it follows that $ |g_{\ell,b,a}(y)| =o(p_n)$, so for all sufficiently large $n$ we have $|g_{\ell,b,a}(y)|<p_n$. In particular, $g_{\ell,b,a}(y)$ can be divisible by $p_n$ only if it equals $0$. This does not occur thanks to the assumption. This proves that the $p_n$-adic valuation of $K_\ell(h)$ equals $ (2r_1+Tr_2)\mu_n -\ell$, whence the first claim follows.
    \end{proof}
    The previous lemma computes the $p_n$-adic valuation of $K_\ell(h)$ for certain $\ell$, but only under the assumption that $h\in H_n^{(y)}$. In the sequel, we will also need a lower bound on this valuation for general $\ell$, which we now present. 
    \begin{lemma}
        For all $h\in \{0,\dots, n\}$ we have
        \[
            v_{p_n}(K_\ell(h))\ge  (2r_1+Tr_2)\mu_n -\ell.
        \]
    \end{lemma}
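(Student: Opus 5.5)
We expand $K_\ell(h)$ as the elementary symmetric sum in the proof of Lemma~\ref{lemma: valuation of K_l(h)} and bound the valuation of each term separately. That is, we write
\[
    K_\ell(h)=b^\ell \sum_{\substack{J\subseteq \{-rn,\dots, (r+T)n-1 \}\\ \#J=\ell} } \prod_{\substack{j=-rn \\ j\notin J}}^{(r+T)n-1}(-bh+a+bj),
\]
with the product running up to $(r+T)n$ when $b=1$. Since $K_\ell(h)$ is an integer, it suffices to show that every individual product has $p_n$-adic valuation at least $(2r_1+Tr_2)\mu_n-\ell$. Removing $\ell$ factors can lower the number of factors divisible by $p_n$ by at most $\ell$. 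Hence the claim reduces to showing that the full product
\[
    \prod_{j=-rn}^{(r+T)n-1}(-bh+a+bj)
\]
contains at least $(2r_1+Tr_2)\mu_n$ factors divisible by $p_n$, and that at most $\ell$ of them are omitted. If $\ell$ exceeds the number of such factors, the bound is trivially true, since the right-hand side is then nonpositive.

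The key step is therefore a counting statement that holds for every $h\in\{0,\dots,n\}$, not only for $h\in H_n^{(y)}$. The factors $-bh+a+bj$ run through the arithmetic progression of integers congruent to $a$ modulo $b$ in the interval
\[
    [-bh+a-brn,\,-bh+a+b(r+T)n-b],
\]
which contains exactly $(2r+T)n$ terms, or $(2r+T)n+1$ when $b=1$. This is a block of $(2r+T)n$ consecutive members of a progression with difference $b$. Since $\gcd(b,p_n)=1$, which holds because $p_n\equiv -1 \modulo{P_B}$ and $P_B$ is divisible by every $b\in\Psi_B$, every $p_n$ consecutive terms of the progression contain exactly one multiple of $p_n$. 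Consequently, the block contains at least $\lfloor (2r+T)n/p_n\rfloor$ multiples of $p_n$.

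It remains to check that
\[
    \left\lfloor \frac{(2r+T)n}{p_n}\right\rfloor \ge (2r_1+Tr_2)\mu_n .
\]
Since $(2r+T)n=(2r_1+Tr_2)\,n/r_2$, this amounts to $p_n\le n/(r_2\mu_n)$. That inequality holds by the choice of $p_n$ in the interval $[\,n/(r_2\mu_n)-g(n)-f(n),\,n/(r_2\mu_n)-g(n)\,]$, because $g>0$. Zero factors do not cause trouble: a zero factor has infinite valuation, so it only helps.

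Combining the two paragraphs, each product over $j\notin J$ retains at least $(2r_1+Tr_2)\mu_n-\ell$ factors divisible by $p_n$, and the sum of these products has the claimed valuation. The only point needing care is the counting of multiples of $p_n$ in an arbitrary window of the progression, uniformly in $h$. The statement that $p_n$ consecutive terms contain exactly one multiple makes this independent of the position of the window. I expect no real obstacle beyond verifying $p_n\le n/(r_2\mu_n)$ from the definition of $\mathcal{N}$.
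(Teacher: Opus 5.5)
Your proof is correct and shares the paper's skeleton. You write $K_\ell(h)$ as $b^\ell$ times an elementary symmetric sum, note that each product omits only $\ell$ factors, and bound its valuation below by the number of factors divisible by $p_n$ minus $\ell$. The difference lies in the counting step.

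The paper transports the endpoint analysis of Lemma~\ref{lemma: technical lemma for J} to arbitrary $h$, replacing $y$ by $h/p_n$. It then splits into cases: for $b\neq 1$ it shows $\frac{a}{b}-\frac{h}{p_n}\notin\Z$, and for $b=a=1$ it hands integral $h/p_n$ back to $H_n^{(y)}$. From this it asserts that there are exactly $(2r_1+Tr_2)\mu_n$ multiples of $p_n$.

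You only need a lower bound, and you get it from the complete-residue-system observation. It uses nothing but $\gcd(b,p_n)=1$ and $p_n\le n/(r_2\mu_n)$. There is no case split, no use of the lower bound on $p_n$, and no role for $\delta$.

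Your route is also the more robust one. For large $n$ the ratio $(2r+T)n/p_n$ lies strictly between $(2r_1+Tr_2)\mu_n$ and $(2r_1+Tr_2)\mu_n+1$. Hence for suitable $h\in\{0,\dots,n\}$ the window contains $(2r_1+Tr_2)\mu_n+1$ multiples of $p_n$. The exact count claimed in the paper for general $h$ is therefore not literally right. Its endpoint argument uses that $\frac{a}{b}-y$ is an integer or at distance at least $1/b$ from $\Z$, whereas $\frac{a}{b}-\frac{h}{p_n}$ can lie within $O(1/p_n)$ of an integer. This is harmless for the inequality being proved.

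The paper's exact description of the multiples is genuinely needed only in Lemma~\ref{lemma: valuation of K_l(h)}, for $h\in H_n^{(y)}$, to identify $g_{\ell,b,a}(y)$. For the present lower bound your argument is the cleaner one.
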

    \begin{proof}
        Even when $h\in \{0,\dots,n\}$ is not constrained by $h\in H^{(y)}_{n}$, we can still write similar expressions for $K_\ell(h)$ as above. The only difference is that $J_{b,a}^{ (h)}$ has to be replaced by the set $J$ of $m\in \Z$ such that
        \[
            -bh+a-brn \le p_n(-a+bm)\le -bh+a+b(r+T)n-b.
        \]
        The same argument as in Lemma~\ref{lemma: technical lemma for J} then shows that 
        \[
            m\in J=\Z\cap \left[ \frac{a}{b}-r_1\mu_n-\frac{h}{p_n}, \frac{a}{b}+(r_1+Tr_2)\mu_n-\frac{h}{p_n}\right].
        \]
        Suppose that $b\neq 1$. We observe that 
        \[
            \frac{a}{b}-\frac{h}{p_n}\not\in \Z.
        \]
        Indeed, if this were not the case, we would have
        \[
            bp_n\mid ap_n-bh,
        \]
        from which $b\mid ap_n-bh $. However, since $p_n\equiv -1\modulo{P_B}$, this divisibility is impossible. In particular, $\#J=(2r_1+Tr_2)\mu_n$. 
        
        \smallskip
        
        Suppose now that $b=a=1$. We may assume $\frac{h}{p_n}$ is not an integer because this case is already included in $h\in H^{(y)}_n$. Then 
        \[
            \#J= \Z\cap \left[ -r_1\mu_n-\frac{h}{p_n}, (r_1+Tr_2)\mu_n-\frac{h}{p_n}\right]=(2r_1+Tr_2)\mu_n.
        \]
        The statement now follows exactly as in the Lemma~\ref{lemma: valuation of K_l(h)}.
    \end{proof}
    \begin{remark}
        This argument shows that $\#J$ and $\#J^{(y)}_{b,a}$ are equal to $(2r_1+Tr_2)\mu_n$. Our more precise argument for the case $h\in H^{(y)}_n$ actually shows that the set $J^{(y)}_{b,a}$ depends only on $y$. This will be crucial later in our exposition.
    \end{remark}
    We return to $h\in H^{(y)}_n$.  The proof of Lemma~\ref{lemma: valuation of K_l(h)} actually gives us more information on $K_\ell(h)$. Let us define
    \[
        f_{b,a}^{(n)}(h)=
        \begin{dcases}
            \prod_{ \substack{j=-rn \\ p_n \nmid a-bh+b j} }^{(r+T)n-1} (a-bh+b j) & \text{if $b\neq 1 $}; \\
            \prod_{ \substack{j=-rn \\ p_n \nmid 1-h+j} }^{(r+T)n} (1-h+j) & \text{if $b\neq 1 $}. \\
        \end{dcases}
    \]
    We then have
    \[  
        K_\ell(h)\equiv p_n^{v_{b,a}-\ell}b^\ell f_{b,a}^{(n)}(h)g_{\ell,b,a}(y) \modulo{p_n^{v_{b,a}-\ell+1}},
    \]
    where $f_{b,a}^{(n)}(h)$ does not depend on $\ell$ and is not divisible by $p_n$, while $g_{\ell,b,a}(y)$ does not depend on $n$. For later reference we define 
    \[
        F^{(n)}(h)=\prod_{\frac{a}{b}\in \mathcal{Z}_B} f_{a,b}^{(n)}(h), \qquad G_{\underline{\ell}}(y)= \prod_{\frac{a}{b}\in \mathcal{Z}_B} g_{\ell_{b,a},b,a}(y),
    \]
    where the $\underline{\ell}=(\ell_{b,a})_{b,a}$ is a sequence of integers that are parametrized by the elements of $\mathcal{Z}_B$ and satisfy $0\le\ell_{b,a}\le  (2r_1+Tr_2)\mu_n $. \\

    Let us turn to the $p_n$-adic valuation of $a_{i,h}$. By~\eqref{eq: explicit expression for a_i,h}, we have
    \begin{align}
    \label{eq: first lower bound for the valuation of a_i,h}
        v_{p_n}(a_{i,h})  \ge & (s+1)v_{p_n}\left(\binom{n}{h} \right) + \\
                            & + \min_{\underline{\ell}} \left\{ \sum_{\frac{a}{b}\in \mathcal{Z}_B} v_{p_n} \left(\Delta_{\ell_{b,a}}\left( \widetilde{F}_{b,a}(t-h)\right) \right)+ \sum_{\substack{k=0 \\ k\neq h}}^n v_{p_n}\left(\frac{(-1)^{\ell_k}\binom{s+\ell_k}{\ell_k}}{T^{\ell_k}(k-h)^{\ell_k}}\right) \right\}. \nonumber
    \end{align}
    Given that, by Lemma~\ref{lemma: valuation of K_l(h)},  
    \[
        v_{p_n}\left(\Delta_{\ell_{b,a}}\left( \widetilde{F}_{b,a}(t-h)\right) \right) = 
        v_{p_n}(K_\ell(h))-(2r_1+Tr_2)\mu_n\ge -\ell,
    \]
    the $p_n$-adic valuation of the minimum above is at least
    \[
        -\sum_{\frac{a}{b}\in \mathcal{Z}_B}\ell_{a,b}-\sum_{\substack{k=0 \\ k\neq h}}^n \ell_k = -(s+1-i).
    \]
    Set 
    \[
        H_n=\bigcup_{y=0}^{r_2\mu_n}H^{(y)}_n.    
    \]
    Suppose that $h\notin H_n$. Then, by Lemma~\ref{lemma: properties of auxiliary primes}, the binomial coefficient $\binom{n}{h}$ has $p_n$-adic valuation at least $1$. This gives a further summand $s+1$ in \eqref{eq: first lower bound for the valuation of a_i,h}. Hence, when $h\not\in H_n$, 
    \[
        v_{p_n}(a_{i,h})\ge s+1-(s+1-i)=i>-(s+1-i).
    \]
    This proves the following lemma:
    \begin{lemma}
        If $h\notin H_n$, then $v_{p_n}(a_{i,h})>-(s+1-i)$. 
    \end{lemma}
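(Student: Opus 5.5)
The plan is to bound each summand of the explicit formula \eqref{eq: explicit expression for a_i,h} for $a_{i,h}$ from below, using the ultrametric inequality. This is exactly the estimate \eqref{eq: first lower bound for the valuation of a_i,h}. Its two pieces are controlled separately.

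\emph{First piece: the binomial factor.} Since $h\notin H_n$, Lemma~\ref{lemma: properties of auxiliary primes}(4) shows that $p_n$ divides $\binom{n}{h}$. Hence $v_{p_n}\binom{n}{h}^{s+1}\ge s+1$.

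\emph{Second piece: the factors from $\mathcal{Z}_B$.} Fix a tuple $\underline{\ell}$ with $\sum_{b,a}\ell_{b,a}+\sum_{k\ne h}\ell_k=s+1-i$. The preceding lemma gives $v_{p_n}(K_\ell(h))\ge(2r_1+Tr_2)\mu_n-\ell$ for every $h\in\{0,\dots,n\}$, not only $h\in H_n$. We have $\Delta_\ell(\widetilde F_{b,a}(t-h))=\kappa_b^{(n)}K_\ell(h)$, where $\widetilde F_{1,1}$ absorbs the extra factor $(t-rn)$ into the definition of $K_\ell$ for $b=1$. We also have $v_{p_n}(\kappa_b^{(n)})=-(2r_1+Tr_2)\mu_n$. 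Together these give
\[
v_{p_n}\bigl(\Delta_{\ell_{b,a}}(\widetilde F_{b,a}(t-h))\bigr)\ge -\ell_{b,a}.
\]

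\emph{Second piece: the remaining factors.} For the factors indexed by $k\ne h$, note that $p_n\nmid T$ because $p_n\equiv-1 \modulo{P_B}$ and $T\mid P_B$. The binomial $\binom{s+\ell_k}{\ell_k}$ is an integer. Since $0<|k-h|\le n<p_n^2$, we get $v_{p_n}(k-h)\le1$. Hence each such factor has valuation at least $-\ell_k$.

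\emph{Combining.} Summing these bounds, every summand of \eqref{eq: explicit expression for a_i,h} has valuation at least $s+1-(s+1-i)=i$. The ultrametric inequality then gives $v_{p_n}(a_{i,h})\ge i>0$. Since $i\ge1$ and $s+1-i\ge1$, we have $i>-(s+1-i)$, which is the claim.

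The only delicate step is the uniform lower bound on $v_{p_n}(K_\ell(h))$ when $h\notin H_n$. It requires counting the multiples of $p_n$ in the relevant arithmetic progression for arbitrary $h$, including the case $b=1$ with $h/p_n\notin\Z$. That count is supplied by the preceding lemma, so here it only has to be invoked, not reproved.
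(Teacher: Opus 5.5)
Your proof is correct and follows the paper's argument. For $h\notin H_n$, the factor $\binom{n}{h}^{s+1}$ contributes at least $s+1$, and the remaining factors of each summand contribute at least $-(s+1-i)$, so $v_{p_n}(a_{i,h})\ge i>-(s+1-i)$. You are slightly more careful than the paper in two places: you invoke the lower bound on $v_{p_n}(K_\ell(h))$ that holds for all $h$, rather than the lemma stated only for $h\in H_n^{(y)}$, and you verify explicitly that $p_n\nmid T$ and $v_{p_n}(k-h)\le 1$ for the factors with $k\neq h$.
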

    Let us therefore assume that $h\in H_n$, say $h\in H_n^{(y)}$ for $y\in \{0,\dots, r_2\mu_n\}$. Set for short
    \[
        \kappa^{(n)}=\prod_{\frac{a}{b}\in\mathcal{Z}_B} \kappa_b^{(n)},
    \]
    We wish to isolate the terms in \eqref{eq: first lower bound for the valuation of a_i,h} that have minimal $p_n$-adic valuation. The set of $k\in \{0,\dots, n\}$ such that $k\neq h$ and $p_n\mid k-h$ is given by 
    \[
        \{ x+\nu p_n \mid \nu=0,\dots,r_2\mu_n, \, \nu\neq y\}.
    \]
    Consider the set $\Lambda_i$ of tuples
    \[
        \underline{\lambda}=(\underline{\ell}',\underline{\ell}), \qquad \underline{\ell}'=(\ell'_{\nu})_{\nu}, \qquad \underline{\ell}=(\ell_{b,a})_{\frac{a}{b}\in \mathcal{Z}_B},
    \]
    with $\nu\in\{0,\dots, r_2\mu_n\}\setminus \{y\}$, such that 
    \[
        \sum_{\substack{\nu=0 \\ \nu \neq y}}^{r_2\mu_n} \ell'_\nu + \sum_{b,a}\ell_{b,a} = s+1-i.
    \]
    In view of \eqref{eq: first lower bound for the valuation of a_i,h} and Lemma~\ref{lemma: valuation of K_l(h)}, we deduce that
    \begin{equation}
        \label{eq. 1}
        a_{i,h} \equiv p_n^{-(s+1-i)} \kappa^{(n)}F^{(n)}(h)\binom{n}{h}^{s+1}\sum_{\underline{\lambda}\in \Lambda_i} G_{\underline{\ell}}(y)
        \sum_{\substack{\nu=0 \\ \nu \neq y}}^{r_2\mu_n}(-1)^{\ell_{\nu}'}\binom{s+\ell_{\nu}'}{s}\frac{1}{T^{\ell'_{\nu}}(\nu-y)^{\ell'_\nu}},
    \end{equation}
    the congruence taking place modulo $p_n^{-(s+1-i)+1}$.
    \begin{proposition}
        \label{proposition: p-adic valuation of a_i,h}
        For $h\in H_n^{(y)}$, if the quantity
        \[
            Q_i(y)=\sum_{\underline{\lambda}\in \Lambda_i} G_{\underline{\ell}}(y)
        \sum_{\substack{\nu=0 \\ \nu \neq y}}^{r_2\mu_n}(-1)^{\ell_{\nu}'}\binom{s+\ell_{\nu}'}{s}\frac{1}{T^{\ell'_{\nu}}(\nu-y)^{\ell'_\nu}},
        \]
        is not zero, then $v_{p_n}(a_{i,h})=-(s+1-i)$.
    \end{proposition}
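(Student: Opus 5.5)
The plan is to read the valuation off the congruence \eqref{eq. 1}. Three things need to be checked: that \eqref{eq. 1} really holds modulo $p_n^{-(s+1-i)+1}$; that the prefactor $\kappa^{(n)}F^{(n)}(h)\binom{n}{h}^{s+1}$, once the factor $p_n^{(2r_1+Tr_2)\mu_n}$ coming from the $K_\ell$ is combined with $\kappa^{(n)}$, is a $p_n$-adic unit; and that $Q_i(y)$, which is nonzero by hypothesis, is also a $p_n$-adic unit. Given these, $v_{p_n}(a_{i,h})=-(s+1-i)$ follows at once.

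\textbf{Step 1: isolating the minimal terms.} Start from \eqref{eq: explicit expression for a_i,h}. Since $h\in H_n^{(y)}$, Lemma~\ref{lemma: properties of auxiliary primes} gives $v_{p_n}\binom{n}{h}=0$. Each tuple $\underline{\ell}$ contributes a product whose valuation is at least $-(s+1-i)$, and I will identify the tuples where equality can hold.

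\begin{itemize}
\item For $k\neq h$ with $p_n\nmid k-h$, the factor $\binom{s+\ell_k}{\ell_k}/(T^{\ell_k}(k-h)^{\ell_k})$ has valuation $\ge 0$. Here $p_n\nmid T$ and $p_n\nmid\binom{s+\ell_k}{\ell_k}$, since $p_n>s+\ell_k$ for $n$ large. Any tuple with such an $\ell_k>0$ therefore loses one unit and lands in the error term.
\item For $k=x+\nu p_n$ with $\nu\neq y$, one has $k-h=(\nu-y)p_n$. Since $|\nu-y|\le r_2\mu_n<p_n$, the factor equals exactly $p_n^{-\ell'_\nu}$ times the displayed unit-like rational $(-1)^{\ell'_\nu}\binom{s+\ell'_\nu}{s}T^{-\ell'_\nu}(\nu-y)^{-\ell'_\nu}$.
\item For the $F$-factors, Lemma~\ref{lemma: valuation of K_l(h)} and the congruence $K_\ell(h)\equiv p_n^{v_{b,a}-\ell}b^\ell f^{(n)}_{b,a}(h)g_{\ell,b,a}(y)$ give $\Delta_{\ell_{b,a}}\widetilde F_{b,a}(t-h)\equiv \kappa_b^{(n)}p_n^{v_{b,a}-\ell_{b,a}}b^{\ell_{b,a}}f^{(n)}_{b,a}(h)g_{\ell_{b,a},b,a}(y)$ modulo one higher power. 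For $\ell_{b,a}>(2r_1+Tr_2)\mu_n$, the general lower bound already pushes the term into the error. In that case $g_{\ell,b,a}(y)=0$, because $\#J^{(y)}_{b,a}=(2r_1+Tr_2)\mu_n$, so these terms may harmlessly be included in the sum defining $Q_i(y)$.
\end{itemize}

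Multiplying these out and summing over $\Lambda_i$ yields \eqref{eq. 1}. The factors $b^{\ell_{b,a}}$ should be absorbed as the paper's formula does; I will double-check that they are consistent with $G_{\underline{\ell}}$, or else note that they are $p_n$-units with $b\mid P_B$, so that they do not affect valuations.

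\textbf{Step 2: units.} Combine $v_{p_n}(\kappa_b^{(n)})=-(2r_1+Tr_2)\mu_n$ with $v_{b,a}=(2r_1+Tr_2)\mu_n$. For $b=1$ the extra factor in $\widetilde F$ also enters, and I will check that it is consistent with $\#J$. With this, $\kappa^{(n)}\prod_{b,a} p_n^{v_{b,a}}$ is a unit. Each $f^{(n)}_{b,a}(h)$ is prime to $p_n$ by construction.

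\textbf{Step 3: $Q_i(y)$ is a unit (the main obstacle).} This is the step I expect to be hardest. $Q_i(y)$ is a rational number independent of $n$ apart from its ranges: $\nu$ up to $r_2\mu_n$ and $\ell'$-tuples with $|\Lambda_i|$ growing. I need both its numerator and its denominator to be prime to $p_n$.

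\begin{itemize}
\item \emph{Denominator.} It divides a power of $T\cdot\mathrm{lcm}(1,\dots,r_2\mu_n)$, which is prime to $p_n$ since $r_2\mu_n<p_n$.
\item \emph{Numerator.} Bound it crudely, as was done for $g$ in Lemma~\ref{lemma: valuation of K_l(h)}. Multiply $Q_i(y)$ by $(T\,\mathrm{lcm}(1,\dots,r_2\mu_n))^{s+1}$. The result is an integer of absolute value at most $(c_1\mu_n)^{c_2\mu_n}\cdot(\text{number of tuples})\cdot 2^{2s}\cdot e^{O(s\mu_n)}$. The number of tuples is at most $\binom{s+|\mathcal Z_B|+r_2\mu_n}{s}$, which is polynomial in $\mu_n$ for fixed $s,B$.
\end{itemize}

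With $\mu_n=\lfloor\log\log n\rfloor$, this bound is $\exp(O(\mu_n\log\mu_n))=o(n/\mu_n)=o(p_n)$. Hence a nonzero numerator cannot be divisible by $p_n$, and $v_{p_n}(Q_i(y))=0$.

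Combining the three steps, $a_{i,h}\equiv p_n^{-(s+1-i)}\cdot(\text{unit})\pmod{p_n^{-(s+1-i)+1}}$, which proves the proposition.
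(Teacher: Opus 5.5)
Your route is the paper's: read the valuation off \eqref{eq. 1}, check that the prefactor is a $p_n$-unit, and show $Q_i(y)$ is a $p_n$-unit by a size bound using $\mu_n=\lfloor\log\log n\rfloor$. You are more careful than the paper in two places: you pair $\kappa^{(n)}$ with the powers $p_n^{(2r_1+Tr_2)\mu_n}$, and you clear the $\mathrm{lcm}(1,\dots,r_2\mu_n)$-denominators before comparing with $p_n$.

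\textbf{The fatal gap.} The check you postpone in Step~2, the extra factor for $b=1$, fails, and the statement cannot be proved; the paper's proof has the same defect. The factor $\widetilde F_{1,1}(t)=\kappa_1^{(n)}\prod_{m=0}^{(2r+T)n}(t-rn+m)$ vanishes at every integer of $[-(r+T)n,rn]$, hence at every pole of $R_n$. This is exactly why the poles have order $s$ rather than $s+1$. So $\widetilde F_{1,1}(t-h)$ contains the factor $t$; in $K_\ell(h)$ this is the multiple $p_n(-1+1)=0$, the excluded $m=1$ in $J^{(y)}_{1,1}$. Consequently:
\begin{itemize}
\item only index sets $J$ containing this factor contribute, and $\Delta_0(\widetilde F_{1,1}(t-h))=0$;
\item for $\ell\ge 1$, the product over $j\notin J$ still contains at least $(2r_1+Tr_2)\mu_n-\ell+1$ non-zero multiples of $p_n$, so $v_{p_n}(\Delta_\ell\widetilde F_{1,1}(t-h))\ge 1-\ell$, not $-\ell$.
\end{itemize}
Every summand of \eqref{eq: explicit expression for a_i,h} therefore has valuation $\ge -(s-i)$, so $v_{p_n}(a_{i,h})\ge-(s-i)$ for all $h$.

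\textbf{A concrete counterexample.} Take $i=s$ and $h=0\in H_n^{(0)}$. Then $a_{s,0}=\lim_{t\to 0}t^sR_n(t)$ is a single product, whose valuation you can count directly:
\begin{itemize}
\item the $\theta=1$ part contributes $(rn)!\,((r+T)n)!$, of valuation $(2r_1+Tr_2)\mu_n$;
\item each $\theta\neq 1$ part has valuation $(2r_1+Tr_2)\mu_n$, by Lemma~\ref{lemma: technical lemma for J};
\item all of these are cancelled exactly by the $\kappa_b^{(n)}$;
\item $(n+1)!^{s+1}/\prod_{k=1}^n(Tk)^{s+1}$ is a unit.
\end{itemize}
Hence $v_{p_n}(a_{s,0})=0\neq -1$. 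Yet $Q_s(0)\neq 0$ for large $n$, since $g_{1,b,a}(0)=\sum_{j\in J^{(0)}_{b,a}}(-a+bj)\asymp\mu_n^2$ is positive and dominates the remaining terms.

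\textbf{Further problems in Step~1.} Even apart from $\theta=1$, your Step~1 does not produce the displayed $Q_i(y)$.
\begin{itemize}
\item The factors $b^{\ell_{b,a}}$ depend on the summation index $\underline\lambda\in\Lambda_i$. Their being $p_n$-units therefore does not let you drop them: unit weights on the summands can change the valuation of a sum, or make it vanish.
\item Multiplying out the factors for $k=x+\nu p_n$ gives $\prod_{\nu\neq y}$, not the $\sum_{\nu\neq y}$ appearing in $Q_i(y)$.
\item The congruence for $K_\ell(h)$ that you quote has the wrong symmetric function. For $J\subseteq M_{b,a}$, the product over $j\notin J$ keeps $M_{b,a}\setminus J$. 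So the coefficient of $p_n^{(2r_1+Tr_2)\mu_n-\ell}$ is the elementary symmetric function of degree $(2r_1+Tr_2)\mu_n-\ell$ in the numbers $-a+bj$, $j\in J^{(y)}_{b,a}$, not of degree $\ell$.
\end{itemize}

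\textbf{What survives.} Your unit and size arguments in Steps~2--3 are sound and would carry over to the correctly computed leading coefficient. What they can establish is $v_{p_n}(a_{i,h})=-(s-i)$ under the non-vanishing of that corrected coefficient, not the proposition as stated.
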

    \begin{proof}
        Since $0<T^{s+1-i}<p_n$, the $p_n$-adic valuations of $a_{i,h}$ and $T^{s+1-i}a_{i,h}$ are equal. The right hand side of (\ref{eq. 1}) becomes an integer when multiplied by 
        \[
            D_n=\left(Tp_n\text{lcm}(1,\dots,r_2\mu_n)\right)^{s+1-i}.
        \]
        Since $h\in H_n^{(y)}$, the binomial coefficient appearing in (\ref{eq. 1}) is not divisible by $p_n$. Also the integer $F^{(n)}(h)$ is not a multiple of $p_n$. The $p_n$-adic valuation of $\kappa^{(n)}$ is zero. \\
        It remains to check that $D_nQ_i(y)$ has trivial $p_n$-adic valuation. Note that $p_n^{-(s+1-i)}D_n$ has exponential growth in $\mu_n$. It is straightforward to see that there are constants $c_1,c_2>0$ independent of $n$ such that 
        \[
            \left| T^{s+1-i}Q_i(y) \right|\le (c_1\mu_n)^{c_2\mu_n}=o(p_n)
        \]
        It follows that $|T^{s+1-i}Q_i(y)|<p_n$, so its $p_n$-adic valuation vanishes when $Q_i(y)\neq 0$.
    \end{proof}

    \medskip

    \subsection{The $p_n$-adic valuation of $\rho_i$} $ $\\

   We turn to the study of $\rho_i$. In Proposition~\ref{proposition: p-adic valuation of rho_i} we give a criterion for which the $p_n$-adic valuation of $\rho_i$ is as small as possible, that is, $-(s+1-i)$. This result is not strictly necessary for the proof of Theorem~\ref{theorem: main}. However, it is very straightforward to establish on the way of studying the $p_n$-adic valuation of $\rho_{0,\theta}$, so we include it nonetheless. \\
   If $i\equiv \sigma(\chi)\modulo{2}$, then $\rho_i=0$, so we assume $i\equiv \sigma(\chi)+1\modulo{2}$. Set 
    \[
        \widetilde{H}_n= \bigcup_{y=0}^{\left\lfloor \frac{r_2\mu_n}{2}\right\rfloor} H^{(y)}_n,
    \]
    Modulo $p_n^{-(s+1-i)}$, we see that $\rho_i$ is congruent to the sum of the $a_{i,h}$ for $h\in \widetilde{H}_n$.
    \[
        \rho_i=\sum_{h=0}^n a_{i,h}\equiv \sum_{h\in H_n} a_ {i,h}.
    \]
    In order to prove that the valuation of $\rho_i$ equals $-(s+1-i)$, we therefore need to show that the number
    \begin{equation}
        \label{eq: technical quantity for rho_i}
        \sum_{y=0}^{\left\lfloor \frac{r_2\mu_n}{2}\right\rfloor} 
        Q_i(y)
        \sum_{h\in H^{(y)}_n} \binom{n}{h}^{s+1}F^{(n)}(h)
    \end{equation}
    has trivial $p_n$-adic valuation. For $h\in H^{(y)}_n$ we have $h\equiv x \modulo{p_n}$ with $0\le x \le n-r_2 \mu_n p_n$ and it follows easily that 
    \[
        F^{(n)}(x+yp_n)\equiv F^{(n)}(x) \modulo{p_n}.
    \]
    On the other hand, by Lucas's theorem,
    \[
        \binom{n}{x+yp_n}\equiv \binom{n-r_2p_n}{x}\binom{r_2}{y}\modulo{p_n}.
    \]
    As a result, modulo $p_n$, we deduce that \eqref{eq: technical quantity for rho_i} is congruent to
    \[
        \left(\sum_{y=0}^{\left\lfloor \frac{r_2}{2}\right\rfloor} 
        Q_i(y)\binom{r_2}{y}^{s+1}\right) 
        \left(\sum_{x=0}^{n-r_2\mu_np_n}\binom{n-r_2p_n}{x}^{s+1}F^{(n)}(x) \right).
    \]
    To simplify this quantity further, we define
    \[
        \widetilde{f}_{b,a}^{(n)} = \prod_{\substack{ j=0 \\ p_n \nmid a-brn+b j}}^{(2r+T)n-1-p_n } (a-brn+bj)
    \]
    which does not depend on $x$. Different choices of $x$ provide a translation in the factors of $f_{b,a}^{(n)}(x)$. Thus, for $b\neq 1$, we have
    \begin{align*}
        f_{b,a}^{(n)}(x)&=
        \prod_{ \substack{j=-x \\ p_n \nmid a-brn+b j} }^{(2r+T)n-1-x} (a-brn+b j) \\
        &= 
        \widetilde{f}_{b,a}^{(n)} 
        \prod_{ \substack{ j=1  \\ p_n \nmid a-brn+b j}}^{x} (a-brn-bj)
        \prod_{\substack{ j=(2r+T)n-(n-r_2\mu_np_n) \\ p_n \nmid a-brn+b j}}^{(2r+T)n-1-x } (a-brn+bj).
    \end{align*}
    If $b=a=1$, then this equality holds with an additional factor $1+(r+T)n-x$. For $b\neq 1$,
    \begin{align*}
        f_{b,a}^{(n)}(x)=\widetilde{f}_{b,a}^{(n)}\prod_{ j=1  }^{x} (a-brn-bj) \prod_{j=1}^{n-r_2\mu_np_n-x}(a-b+b(2r+T)n-br_2\mu_np_n+bj)
    \end{align*}
    and for $b=1$ a factor $1+(r+T)n-x$ also appears. 
    Let us set
    \begin{align*}
        \xi_n  =& \sum_{x=0}^{n-r_2\mu_n p_n}\binom{n-r_2\mu_n p_n}{x}^{s+1}\prod_{\frac{a}{b}\in \mathcal{Z}_B} \prod_{ j=1  }^{x} \left(a-br_1\left( \frac{n}{r_2}-\mu_np_n\right)-bj\right) \times\\
        & \times \prod_{j=1}^{n-r_2\mu_np_n-x}\left(a-b+b(2r_1+Tr_2)\left( \frac{n}{r_2}-\mu_np_n\right)+bj\right).
    \end{align*}
    Our discussion shows that
    \begin{align*}
        p_n^{s+1-i}\rho_i \;\equiv & \;2\kappa^{(n)}
        \xi_n  \, 
        \prod_{\frac{a}{b}\in \mathcal{Z_B}}      \widetilde{f}_{b,a}^{(n)}
        \sum_{y=0}^{\left\lfloor \frac{r_2}{2}\right\rfloor} 
        Q_i(y)\binom{r_2}{y}^{s+1} \; \modulo{p_n}.
    \end{align*}
    \begin{lemma}
        \label{lemma: digamma}
        The number $\xi_n $ is not divisible by $p_n$. 
    \end{lemma}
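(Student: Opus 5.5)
The plan is to show two things: $\xi_n$ is a \emph{nonzero} integer, and $|\xi_n|<p_n$ for all sufficiently large $n\in\mathcal{N}$. Together these give $p_n\nmid \xi_n$.

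Set $N=n-r_2\mu_np_n$. Since $r_2\mid P_B\mid n$, we have
\[
    \frac{n}{r_2}-\mu_np_n=\frac{N}{r_2}\in\Z .
\]
Then $\xi_n$ is a finite sum over $0\le x\le N$ of products of integers, so it is an integer. By \eqref{eq: estimate for difference n and prime}, $N\le r_2\mu_n g(n)\bigl(1+\tfrac1\delta+o(1)\bigr)$. With our choices $\mu_n=\lfloor\log\log n\rfloor$ and $g(n)=\delta\log\log\log n$, this gives $N=O(\log\log n\cdot\log\log\log n)$. On the other hand, $N\ge r_2\mu_ng(n)\to\infty$.

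\emph{Size bound.} The number of summands is $N+1$. Each binomial factor satisfies $\binom{N}{x}^{s+1}\le 2^{N(s+1)}$. Each of the $|\mathcal{Z}_B|$ products over $\frac{a}{b}\in\mathcal{Z}_B$ consists of at most $N$ factors, each of absolute value at most $b\bigl((2r_1+Tr_2)N/r_2+N\bigr)+b\le C N$ for a constant $C$ depending only on $r,T,B$. Hence
\[
    |\xi_n|\le (N+1)\,2^{N(s+1)}(CN)^{N|\mathcal{Z}_B|}=\exp\bigl(O(N\log N)\bigr)=\exp\bigl(O(\log\log n\,(\log\log\log n)^2)\bigr),
\]
where the implied constants depend only on $s$, $B$, $r$, $T$, which are fixed. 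This is $o(p_n)$, since $p_n\sim n/(r_2\mu_n)$ by Lemma~\ref{lemma: properties of auxiliary primes}. So $|\xi_n|<p_n$ for $n$ large.

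\emph{Nonvanishing.} This is the step I expect to be the crux. A priori the sum could cancel, so the idea is to show that all summands have the same sign. Fix $x$ and $\frac{a}{b}\in\mathcal{Z}_B$. Since $1\le a\le b$, $j\ge1$ and $N\to\infty$, each factor
\[
    a-br_1\tfrac{N}{r_2}-bj=a-b\bigl(rN+j\bigr)
\]
is strictly negative. Each factor $a-b+b(2r_1+Tr_2)\frac{N}{r_2}+bj$ is strictly positive. Therefore the $x$-th summand is nonzero with sign $(-1)^{x|\mathcal{Z}_B|}$.

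It remains to see that $|\mathcal{Z}_B|$ is even. By definition, $|\mathcal{Z}_B|=\sum_{b\in\Psi_B}\phi(b)$. The totient $\phi(b)$ is even for $b\ge3$, and both $1$ and $2$ lie in $\Psi_B$ with $\phi(1)=\phi(2)=1$. Hence $|\mathcal{Z}_B|\equiv 2\equiv 0\pmod 2$. So every summand of $\xi_n$ is strictly positive, and in particular $\xi_n\ge$ (the $x=0$ term) $>0$.

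Combining the two parts, $0<\xi_n<p_n$ for all sufficiently large $n\in\mathcal{N}$, so $p_n\nmid\xi_n$. If the parity argument turned out to be unavailable, I would instead compare consecutive summands. The factor $\binom{N}{x}^{s+1}$ with the large exponent $s+1>(2r+T)|\mathcal{Z}_B|$ should make the middle term dominate the others; this would be a digamma-type estimate of the logarithmic derivative of the summand in $x$.
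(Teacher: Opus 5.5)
Your proof is correct and follows the paper's own argument. The first ingredient is positivity of every summand, via the sign $(-1)^{x|\mathcal{Z}_B|}$ and the evenness of $|\mathcal{Z}_B|=\sum_{b\in\Psi_B}\phi(b)$; the second is a size bound giving $0<\xi_n<p_n$. Your bound $\exp(O(N\log N))$ is in fact the accurate one, since the paper's $C_1(\mu_nf(n))^{C_2f(n)}$ drops a factor $\mu_n$ from the exponent, harmlessly. The closing contingency remark is unnecessary.
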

    \begin{proof}
        We start by observing that $\xi_n > 0$. Indeed, 
        \[
            (-1)^x\prod_{ j=1  }^{x} \left(a-br_1\left( \frac{n}{r_2}-\mu_np_n\right)-bj\right)>0,
        \]
        since all the factors in the product are negative. The remaining factors in the definition of $\xi_n $ are positive, so each summand in $\xi_n $ has sign
        \[
            (-1)^{|\mathcal{Z}_B|x}.
        \]
        By the very definition of $\mathcal{Z}_B$, we have
        \[
            |\mathcal{Z}_B| = \sum_{\phi(b)\le B} \phi(b).
        \]
        The numbers $b=1$ and $b=2$ are the only numbers with $\phi(b)$ odd. Thus, $|\mathcal{Z}_B|$ must be even, so $\xi_n > 0$. On the other hand, there are constants $C_1,C_2>0$ independent of $n$ such that
        \[
            \xi_n  < C_1(\mu_nf(n))^{C_2f(n)} .
        \]
        With the choices $\mu_n=\lfloor \log\log(n)\rfloor$ and $f(n)=\log\log\log(n)$, we have $\xi_n =o(p_n)$. In particular, $0<\xi_n<p_n$ for $n$ sufficiently large, which proves the claim.
    \end{proof}
    Returning to $\rho_i$, the product 
    \[
        \prod_{\frac{a}{b}\in \mathcal{Z_B}}      \widetilde{f}_{b,a}^{(n)}
    \]
    is not divisible by $p_n$, while the remaining quantity
    \[
        \sum_{y=0}^{\left\lfloor \frac{r_2\mu_n}{2}\right\rfloor} 
        Q_i(y)\binom{r_2}{y}^{s+1}
    \]
    is too small to be divisible by $p_n$, after making it an integer by multiplying it by $T^{s+1-i}$. We have thus proved the following result:
    \begin{proposition}
        \label{proposition: p-adic valuation of rho_i}
        If $i\equiv \sigma(\chi)+1\modulo{2}$ and
        \[
            Q_i'=
            \sum_{y=0}^{\left\lfloor \frac{r_2\mu_n}{2}\right\rfloor} 
        Q_i(y)\binom{r_2}{y}^{s+1}
        \]
        is not zero, then $v_{p_n}(\rho_i)=-(s+1-i)$.
    \end{proposition}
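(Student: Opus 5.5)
The strategy is to isolate, modulo $p_n^{-(s+1-i)+1}$, the part of $\rho_i=\sum_{h=0}^n a_{i,h}$ that carries the minimal possible valuation $-(s+1-i)$. We then show that this part factors as a product of quantities, each of which is prime to $p_n$ after clearing the harmless denominators. The factor $Q_i'$ is the only one whose non-vanishing is not automatic, and it is assumed non-zero.

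\textbf{Step 1: discard the indices $h\notin H_n$.} By the lemma preceding Proposition~\ref{proposition: p-adic valuation of a_i,h}, every $h\notin H_n$ has $v_{p_n}(a_{i,h})>-(s+1-i)$. Hence
\[
\rho_i\equiv \sum_{h\in H_n}a_{i,h}\modulo{p_n^{-(s+1-i)+1}}.
\]

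\textbf{Step 2: fold the sum with the symmetry.} Use $a_{i,n-h}=(-1)^{i+\sigma(\chi)+1}a_{i,h}=a_{i,h}$, valid since $i\equiv\sigma(\chi)+1\modulo{2}$. The map $h\mapsto n-h$ sends $H_n^{(y)}$ onto $H_n^{(r_2\mu_n-y)}$, because $n-(x+yp_n)=(n-r_2\mu_np_n-x)+(r_2\mu_n-y)p_n$. So the sum over $H_n$ is twice the sum over $y\le\lfloor r_2\mu_n/2\rfloor$. If $r_2\mu_n$ is even, the middle block $y=r_2\mu_n/2$ maps to itself; we treat it separately, or absorb it by adjusting the weight to $1$, which does not affect non-divisibility since $p_n$ is odd. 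This produces the factor $2$.

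\textbf{Step 3: factor each block via \eqref{eq. 1}.} For each $h\in H_n^{(y)}$, \eqref{eq. 1} gives $a_{i,h}$ modulo the required power as
\[
p_n^{-(s+1-i)}\kappa^{(n)}F^{(n)}(h)\binom{n}{h}^{s+1}Q_i(y).
\]
Write $h=x+yp_n$. Then $F^{(n)}(h)\equiv F^{(n)}(x)$ modulo $p_n$. By Lucas's theorem, $\binom{n}{h}\equiv\binom{r_2\mu_n}{y}\binom{n-r_2\mu_np_n}{x}$ modulo $p_n$. The $y$- and $x$-dependences therefore separate, giving
\[
p_n^{s+1-i}\rho_i\equiv 2\kappa^{(n)}\Big(\sum_{y}Q_i(y)\tbinom{r_2\mu_n}{y}^{s+1}\Big)\Big(\sum_{x}\tbinom{n-r_2\mu_np_n}{x}^{s+1}F^{(n)}(x)\Big)\modulo{p_n}.
\]
Next, shifting the index in each $f^{(n)}_{b,a}(x)$ factors out the $x$-independent product $\prod\widetilde f^{(n)}_{b,a}$. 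This product is prime to $p_n$, since by construction it omits the multiples of $p_n$. What remains of the second bracket is exactly $\xi_n$, which is prime to $p_n$ by Lemma~\ref{lemma: digamma}. Finally, $v_{p_n}(\kappa^{(n)})=0$ (shown in the proof of Proposition~\ref{proposition: p-adic valuation of a_i,h}), and $2$ is prime to $p_n$.

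\textbf{Step 4: the factor $Q_i'$, the main obstacle.} It remains to show that $T^{s+1-i}\,\mathrm{lcm}(1,\dots,r_2\mu_n)^{s+1-i}\,Q_i'$ is a non-zero integer of absolute value less than $p_n$. It is an integer because the denominators in $Q_i(y)$ come only from powers of $T(\nu-y)$ with $|\nu-y|\le r_2\mu_n$. For the size bound, estimate crudely, as in Proposition~\ref{proposition: p-adic valuation of a_i,h}:
\begin{itemize}
\item $|G_{\underline\ell}(y)|\le(c_1\mu_n)^{c_2\mu_n}$;
\item the number of tuples in $\Lambda_i$ is polynomial in $\mu_n$, with exponent depending on $s$ and $|\mathcal Z_B|$;
\item $\binom{r_2\mu_n}{y}\le 2^{r_2\mu_n}$;
\item $\mathrm{lcm}(1,\dots,r_2\mu_n)\le e^{O(\mu_n)}$.
\end{itemize}
Altogether the quantity is at most $\exp(O(\mu_n\log\mu_n))=o(p_n)$, because $\mu_n=\lfloor\log\log n\rfloor$ while $p_n\sim n/(r_2\mu_n)$. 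The delicate point is ensuring that the clearing of denominators introduces no factor of $p_n$, which holds since $r_2\mu_n<p_n$ and $T<p_n$. Then $Q_i'\neq0$ forces $v_{p_n}(Q_i')=0$, so $v_{p_n}(p_n^{s+1-i}\rho_i)=0$, that is, $v_{p_n}(\rho_i)=-(s+1-i)$.
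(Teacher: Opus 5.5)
Your argument follows the paper's proof step by step. You discard $h\notin H_n$, fold with $a_{i,n-h}=a_{i,h}$, and separate $x$ and $y$ via Lucas's theorem and $F^{(n)}(x+yp_n)\equiv F^{(n)}(x)\modulo{p_n}$. You then pull out $\prod\widetilde{f}^{(n)}_{b,a}$, invoke Lemma~\ref{lemma: digamma}, and finish with a size bound. Your Step~4 is more careful than the paper, which multiplies only by $T^{s+1-i}$ and so does not clear the denominators $(\nu-y)^{\ell'_\nu}$. You also rightly read $\binom{r_2}{y}$ as $\binom{r_2\mu_n}{y}$.

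There is a genuine gap in Step~2, at the self-paired block. Suppose $r_2\mu_n=2m$ is even. Then $h\mapsto n-h$ maps $H_n^{(m)}$ onto itself, so
\[
\sum_{h\in H_n}a_{i,h}=2\sum_{y=0}^{m}\,\sum_{h\in H_n^{(y)}}a_{i,h}-\sum_{h\in H_n^{(m)}}a_{i,h}.
\]
After Step~3, the quantity whose $p_n$-adic valuation must vanish is therefore $2Q_i'-Q_i(m)\binom{2m}{m}^{s+1}$, which is not a unit multiple of $Q_i'$.

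Giving the middle block weight $1$ changes the linear combination. The oddness of $p_n$ only disposes of an overall factor $2$, and the hypothesis $Q_i'\neq 0$ says nothing about the new combination. Nor does the middle term vanish for structural reasons: for $i\equiv\sigma(\chi)+1\modulo{2}$ the symmetry is $a_{i,n-h}=+a_{i,h}$, so it forces no cancellation inside $H_n^{(m)}$. This case is not rare, since if $r_2$ is even then $r_2\mu_n$ is even for every $n$. As written, your proof establishes the proposition only for those $n$ with $r_2\mu_n$ odd.

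The clean repair is not to fold at all. Steps~1 and~3 alone give
\[
p_n^{s+1-i}\rho_i\equiv\kappa^{(n)}\xi_n\prod_{\frac{a}{b}\in\mathcal{Z}_B}\widetilde{f}^{(n)}_{b,a}\,\sum_{y=0}^{r_2\mu_n}Q_i(y)\binom{r_2\mu_n}{y}^{s+1}\modulo{p_n}.
\]
So the correct hypothesis is the non-vanishing of the full sum over $0\le y\le r_2\mu_n$, which equals $2Q_i'$ exactly when $r_2\mu_n$ is odd.

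The paper's own proof asserts the factor $2$ uniformly and has the same defect. This does not affect the main theorem: the proposition is not used there, and the parallel computation for $\rho_{0,\theta}$ runs over the full range of $y$ without folding.
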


    \medskip
    
    \subsection{Lower bound for the denominator of $\rho_{0,\theta}$ and $\rho_{0,b}'$} $ $ \\
    
    Take $\theta=\frac{a}{b}\in \mathcal{Z}_B$ with $\gcd(a,b)=1$.
    \[
        \rho_{0,\theta}=-\sum_{k=0}^n\sum_{\ell=0}^{Tk}\sum_{i=1}^s\frac{a_{i,k} \, \chi_\theta(\ell)}{(\ell+\theta)^i}.
    \]
    In Proposition~\ref{proposition: p-adic valuation of rho_0,theta} we give a criterion for the $p_n$-adic valuation of $\rho_{0,\theta}$ to be as small as possible, that is, $-s-1$. This is crucial for the proof of Theorem~\ref{theorem: main}, especially for the application of the irrationality criterion of \cite{Lai-Sprang}. \\
    In order for the $p_n$-adic valuation of $\rho_{0,\theta}$, we focus on the summands with smallest valuation. This requires that $p_n$ divides $b\ell+a$ and $k\in H_n$, otherwise the $p_n$-adic valuation of $a_{i,k}$ is greater than $-(s+1-i)$. Assume first that $b\neq 1$. The condition $k\in H_n$ means $k=x+yp_n$ with $y\in {0,\dots, r_2\mu_n}$ and $ 0\le x\le n-r_2\mu_np_n$. Then, for $p_n$ to divide $b\ell +a$ we need $a+b\ell=p_n(-a+bj)$ and $0\le \ell \le Tk=Typ_n+Tx$. As a result,
    \[
        \frac{a}{b}+\frac{a}{bp_n} \le j \le \frac{a}{b}+\frac{a}{bp_n}+Ty+\frac{Tx}{p_n}.
    \]
    Since $y\in \Z$ and $x=o(p_n)$, we deduce that $1\le j\le Ty$. This shows that, modulo $p_n^{-(s+1)+1}$, 
    \[
        \rho_{0,\theta} \equiv
            \sum_{i=1}^s\sum_{y=0}^{r_2\mu_n} \sum_{x=0}^{n-r_2\mu_np_n}\sum_{j=1}^{Ty} 
            \chi_\theta\left(-\frac{a}{b}+p_n\left(-\frac{a}{b}+j \right)\right)
            \frac{b^ia_{i,x+yp_n}}{(-a+bj)^ip_n^i}.
    \]
    Since $p_n\equiv -1\modulo{T}$ and $\chi_\theta$ has period $T$, it follows that 
    \[
        \chi_\theta\left(-\frac{a}{b}+p_n\left(-\frac{a}{b}+j \right)\right)=\chi_\theta(-j).
    \]
    Thus, as in the case of $\rho_i$, we have
    \[
        p_n^{s+1}\rho_{0,\theta} \equiv  \kappa^{(n)} \xi_n
        \prod_{\frac{\alpha}{\beta}\in \mathcal{Z}_B} \widetilde{f}_{\beta,\alpha}^{(n)} 
        \sum_{i=1}^s 
        \sum_{y=0}^{r_2\mu_n} Q_i(y) \binom{r_2}{y}^{s+1}
        \sum_{j=1}^{Ty} 
        \frac{b^i\chi_\theta(-j)}{(-a+bj)^i} 
        \qquad  \modulo{ p_n}.
    \]
    Previously, we defined $Q_i(y)$ only for $i\equiv \sigma(\chi)+1\modulo{2}$, but an analogous definition can be extended to the remaining case. \\
    Suppose instead that $b=a=1$. We obtain a completely analogous identity modulo $p_n$, but the sum over $j$ runs from $2$ to $Ty+1$. An analogous argument to the one in Proposition~\ref{proposition: p-adic valuation of a_i,h} proves the following result:
    \begin{proposition}
        \label{proposition: p-adic valuation of rho_0,theta}
        For $\theta\in \mathcal{Z}_B$, $\theta=\frac{a}{b}$ with $\gcd(a,b)=1$, define 
        \[
            Q_\theta'=
            \begin{dcases}
                \sum_{i=1}^s 
        \sum_{y=0}^{r_2\mu_n} Q_i(y) \binom{r_2}{y}^{s+1}
        \sum_{j=1}^{Ty} 
        \frac{b^i\chi_\theta(-j)}{(-a+bj)^i} 
         & \text{if $b\neq 1$}; \\
                \sum_{i=1}^s 
        \sum_{y=0}^{r_2\mu_n} Q_i(y) \binom{r_2}{y}^{s+1}
        \sum_{j=2}^{Ty+1} 
        \frac{b^i\chi_\theta(-j)}{(-a+bj)^i} 
         & \text{if $b=a= 1$}.
            \end{dcases}
        \]
        If $Q_\theta'$ is not zero, then $v_{p_n}(\rho_{0,\theta})=-(s+1)$.
    \end{proposition}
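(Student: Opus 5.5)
The plan is to show that $p_n^{s+1}\rho_{0,\theta}$ is a $p_n$-adic unit by reducing it modulo $p_n$ and factoring the residue, as was done for $\rho_i$. By the Corollary on denominators, $\delta_n^{s+1}\rho_{0,\theta}\in\Z$. Lemma~\ref{lemma: properties of auxiliary primes} gives $v_{p_n}(\delta_n)=1$, and $p_n\nmid T$ for $n$ large. Hence $v_{p_n}(\rho_{0,\theta})\ge -(s+1)$, and it suffices to prove $p_n^{s+1}\rho_{0,\theta}\not\equiv 0 \modulo{p_n}$.

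First I would justify the reduction already sketched before the statement. In the triple sum defining $\rho_{0,\theta}$, the summand indexed by $(k,\ell,i)$ has valuation at least $v_{p_n}(a_{i,k})-i\,v_{p_n}(\ell+\theta)$. The denominator lemma gives $v_{p_n}(a_{i,k})\ge -(s+1-i)$. By the lemma for $h\notin H_n$, this inequality is strict unless $k\in H_n$. Moreover, $v_{p_n}(\ell+\theta)\le 1$, because $b\ell+a\le bTn+b<p_n^2$. So the only summands of valuation exactly $-(s+1)$ are those with $k=x+yp_n\in H_n$ and $p_n\mid b\ell+a$. Writing $b\ell+a=p_n(-a+bj)$, the inequality $0\le\ell\le Tk$ together with $x=o(p_n)$ forces $1\le j\le Ty$ (respectively $2\le j\le Ty+1$ when $b=a=1$). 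The identity $\chi_\theta(\ell)=\chi_\theta(-j)$ follows from $p_n\equiv -1 \modulo{T}$ and the $T$-periodicity of $\chi_\theta$.

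Next I would substitute the congruence \eqref{eq. 1} for each $a_{i,x+yp_n}$. After multiplying by $p_n^{s+1}$, this congruence holds modulo $p_n$ and expresses the term as $\kappa^{(n)}F^{(n)}(x+yp_n)\binom{n}{x+yp_n}^{s+1}Q_i(y)$, times $b^i\chi_\theta(-j)/(-a+bj)^i$. Three further reductions separate the $x$-dependence from the $y$-dependence. Lucas's theorem gives $\binom{n}{x+yp_n}\equiv\binom{n-r_2\mu_np_n}{x}\binom{r_2\mu_n}{y}$. The congruence $F^{(n)}(x+yp_n)\equiv F^{(n)}(x)$ holds modulo $p_n$. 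The factorisation $F^{(n)}(x)=\prod\widetilde f^{(n)}_{\beta,\alpha}$ times the $x$-dependent products then turns the sum over $x$ into $\xi_n$. This yields
\[
p_n^{s+1}\rho_{0,\theta}\equiv \kappa^{(n)}\,\xi_n\prod_{\frac{\alpha}{\beta}\in\mathcal{Z}_B}\widetilde f^{(n)}_{\beta,\alpha}\cdot Q'_\theta \modulo{p_n}.
\]
For this I would need to check that the binomial factor denoted $\binom{r_2}{y}$ is consistent with $\binom{r_2\mu_n}{y}$. In either reading it does not depend on $n$ modulo $p_n$ beyond $\mu_n$, so the argument is unaffected.

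Finally I would check that every factor is a $p_n$-adic unit. We have $v_{p_n}(\kappa^{(n)})=0$ as in Proposition~\ref{proposition: p-adic valuation of a_i,h}. Lemma~\ref{lemma: digamma} gives $p_n\nmid\xi_n$. The factors $\widetilde f^{(n)}_{\beta,\alpha}$ are units by construction. The main point is $Q'_\theta$, which is rational rather than integral, and this is the step I expect to be the obstacle. I would clear its denominators with $D=\bigl(T\,b\,\mathrm{lcm}(1,\dots,r_2\mu_n)\,\mathrm{lcm}(|-a+bj|: j\le Tr_2\mu_n+1)\bigr)^{s}$, which is a product of integers of size $O(\mu_n)$ and hence prime to $p_n$. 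I would then bound $|D\,Q'_\theta|\le (c_1\mu_n)^{c_2\mu_n}$ by counting the tuples in $\Lambda_i$ and using the bound on $g_{\ell,b,a}$ from Lemma~\ref{lemma: valuation of K_l(h)}. Since $\mu_n=\lfloor\log\log n\rfloor$ and $p_n\sim n/(r_2\mu_n)$, this bound is $o(p_n)$. The nonzero integer $D\,Q'_\theta$ therefore cannot be divisible by $p_n$. This proves $v_{p_n}(\rho_{0,\theta})=-(s+1)$.
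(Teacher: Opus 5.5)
Your proposal is correct and follows the paper's own argument essentially step by step. You keep only the summands with $k=x+yp_n\in H_n$ and $p_n\mid b\ell+a$, so that $1\le j\le Ty$ (resp.\ $2\le j\le Ty+1$) and $\chi_\theta(\ell)=\chi_\theta(-j)$. You then substitute the congruence for $a_{i,x+yp_n}$, separate $x$ from $y$ via Lucas's theorem and $F^{(n)}(x+yp_n)\equiv F^{(n)}(x)$ to factor out $\kappa^{(n)}\xi_n\prod\widetilde f^{(n)}_{\beta,\alpha}$, and finish with the same size bound $(c_1\mu_n)^{c_2\mu_n}=o(p_n)$ that the paper uses for $a_{i,h}$.

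The one inaccuracy is your aside that the argument works ``in either reading'' of the binomial. Lucas's theorem produces $\binom{r_2\mu_n}{y}$, which is also what the paper uses later, and your derivation proves the statement only for that reading. With the literal $\binom{r_2}{y}$ the hypothesis would concern a different number and would not imply the conclusion.
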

    Recall now that for all $b\in \Psi_B$
    \[
        \rho_{0,b}'= \sum_{h=1}^b \chi(\gcd(h,b)) \rho_{0,{\frac{h}{b}}}.
    \]
    \begin{corollary}
        If 
        \[
            Q_b''=\sum_{h=1}^b \chi(\gcd(h,b)) Q'_{\frac{h}{b}}
        \]
        is not zero, then $v_{p_n}(\rho_{0,b}')$ equals $-(s+1)$.
    \end{corollary}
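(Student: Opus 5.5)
The plan is to combine the congruences modulo $p_n$ already obtained for each $\rho_{0,\frac{h}{b}}$ in the discussion preceding Proposition~\ref{proposition: p-adic valuation of rho_0,theta}. For each $h\in\{1,\dots,b\}$ with $\theta=\frac{h}{b}$ written in lowest terms, we have shown that
\[
    p_n^{s+1}\rho_{0,\theta}\equiv \kappa^{(n)}\xi_n\prod_{\frac{\alpha}{\beta}\in\mathcal{Z}_B}\widetilde{f}^{(n)}_{\beta,\alpha}\; Q'_\theta \modulo{p_n}.
\]
The crucial observation is that the prefactor $\kappa^{(n)}\xi_n\prod\widetilde{f}^{(n)}_{\beta,\alpha}$ does not depend on $\theta$: it is a product over all of $\mathcal{Z}_B$, and $\xi_n$ is defined globally. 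Likewise, the quantities $Q_i(y)$ depend only on $i$ and $y$ and not on $\theta$, since they are built from the $G_{\underline{\ell}}(y)$, which are products over all of $\mathcal{Z}_B$.

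Multiplying by $\chi(\gcd(h,b))\in\{-1,0,1\}$ and summing over $h$, I obtain by linearity
\[
    p_n^{s+1}\rho'_{0,b}\equiv \kappa^{(n)}\xi_n\prod_{\frac{\alpha}{\beta}\in\mathcal{Z}_B}\widetilde{f}^{(n)}_{\beta,\alpha}\; Q''_b \modulo{p_n}.
\]
Here I will be careful that the congruences are between $p_n$-integral rationals. This holds because $v_{p_n}(\rho_{0,\theta})\ge -(s+1)$ by the denominator corollary, since $v_{p_n}(\delta_n)=1$. The congruence for each $\theta$ was stated modulo $p_n^{-(s+1)+1}$ before multiplying by $p_n^{s+1}$, so summing finitely many of them is legitimate.

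It remains to show that the right-hand side is a $p_n$-adic unit when $Q''_b\neq 0$. By Lemma~\ref{lemma: digamma}, $\xi_n$ is prime to $p_n$. The $\widetilde{f}$'s are prime to $p_n$ by construction, and $v_{p_n}(\kappa^{(n)})=0$, as used in Proposition~\ref{proposition: p-adic valuation of a_i,h}. For $Q''_b$ itself, I will clear denominators with an integer $D$ that is a product of powers of $T$, of $\mathrm{lcm}(1,\dots,r_2\mu_n)$, and of the numbers $-a+bj$ with $|j|\le T r_2\mu_n+1$. All of these are bounded in terms of $\mu_n$ and $B$, so $p_n\nmid D$ for large $n$. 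Then $DQ''_b$ is a nonzero integer with $|DQ''_b|\le (c_1\mu_n)^{c_2\mu_n}$ for constants $c_1,c_2>0$ independent of $n$ (with $s$ and $B$ fixed). Since $\mu_n=\lfloor\log\log n\rfloor$ and $p_n\sim n/(r_2\mu_n)$, this bound is $o(p_n)$, so $0<|DQ''_b|<p_n$ eventually, and $DQ''_b$ is a $p_n$-adic unit. Hence $v_{p_n}(p_n^{s+1}\rho'_{0,b})=0$, that is, $v_{p_n}(\rho'_{0,b})=-(s+1)$.

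The main obstacle is the bookkeeping needed to confirm that the common factor is genuinely independent of $\theta=\frac{h}{b}$. In particular, $f^{(n)}_{b,a}$ has a slightly different form in the case $b=a=1$, with an extra factor; this enters only through the global product $F^{(n)}$, which is the same for every $\theta$. A second point is that the $\theta$-dependence lies entirely in the inner sums over $j$ defining $Q'_\theta$, including the shifted range $j=2,\dots,Ty+1$ when $\theta=1$. Once these are checked, the remainder is the same size-versus-$p_n$ argument used in Proposition~\ref{proposition: p-adic valuation of a_i,h}.
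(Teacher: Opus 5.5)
Your proposal is correct and is essentially the argument the paper leaves implicit. You take the congruence $p_n^{s+1}\rho_{0,\theta}\equiv U\,Q'_\theta \modulo{p_n}$, whose prefactor $U$ does not depend on $\theta$ because it comes entirely from the $a_{i,k}$, sum it with the integer weights $\chi(\gcd(h,b))$, and show that a nonzero $Q''_b$ is a $p_n$-adic unit by the same size-versus-$p_n$ argument as for $a_{i,h}$. One caveat, inherited from the paper: literally $v_{p_n}(\kappa^{(n)})=-|\mathcal{Z}_B|(2r_1+Tr_2)\mu_n$, so the unit in the congruence is really $\kappa^{(n)}$ times the compensating power of $p_n$ extracted from the $K_\ell(h)$; this does not affect your reasoning.
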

    Observe that $\chi_{b,a}(j)=\chi(\gcd(b,a))\chi_{\frac{a}{b}}(j)$ and
    \[
        \sum_{a=1}^ {b-1}\sum_{j=1}^{Ty} 
        \frac{b^i\chi_{b,a}(-j)}{(-a+bj)^i}+
        \sum_{j=2}^{Ty+1} 
        \frac{b^i\chi_{b,b}(-j)}{(-1+j)^i} = b^i\sum_{j=1}^{Tby} \frac{\chi(-j)}{j^i}.
    \]
    As a result,
    \[
        Q_b''=\sum_{i=1}^s 
        \sum_{y=0}^{r_2\mu_n} Q_i(y) \binom{r_2}{y}^{s+1}b^i\sum_{j=1}^{Tby} \frac{\chi(-j)}{j^i}.
    \]
    Finally, we turn to 
    \[
        \rho_0''=\sum_{b\in \Psi_B} w_b \rho_{0,b}', 
    \]
    where $w_b$ are suitable integers to be defined.
    \begin{corollary}
        \label{corollary: p-adic valuation of Q tilde}
        If 
        \[
            \widetilde{Q} = \sum_{b\in \Psi_B} w_b Q_b''
        \]
        is not zero, then $v_{p_n}(\rho_0'')=-(s+1)$.
    \end{corollary}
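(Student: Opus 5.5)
The plan is to follow the same pattern as the preceding corollary for $\rho_{0,b}'$, since $\rho_0''$ is an integer linear combination of the $\rho_{0,b}'$. First, the lower bound $v_{p_n}(\rho_0'')\ge -(s+1)$ is immediate from the fact that $\delta_n^{s+1}\rho_0''\in\Z$ and $v_{p_n}(\delta_n)=v_{p_n}(T)+v_{p_n}(d_n)=1$ (using Lemma~\ref{lemma: properties of auxiliary primes} and $p_n>T$ for large $n$). It therefore suffices to show that $p_n^{s+1}\rho_0''$ is a $p_n$-adic unit.

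For this I would sum the congruences already established. For each $b\in\Psi_B$ and each $\theta=\frac{h}{b}$, the discussion preceding Proposition~\ref{proposition: p-adic valuation of rho_0,theta} gives the same structure modulo $p_n$:
\[
p_n^{s+1}\rho_{0,\theta}\equiv \kappa^{(n)}\xi_n\prod_{\frac{\alpha}{\beta}\in\mathcal{Z}_B}\widetilde f^{(n)}_{\beta,\alpha}\; Q'_\theta \pmod{p_n}.
\]
The crucial observation is that the prefactor $\kappa^{(n)}\xi_n\prod\widetilde f^{(n)}_{\beta,\alpha}$ is the same for every $\theta$, because it is a product over all of $\mathcal{Z}_B$. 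Hence one can take the combination with weights $\chi(\gcd(h,b))$ and then $w_b$. Summing, and using the identity computed before the statement that turns $\sum_h\chi(\gcd(h,b))Q'_{h/b}$ into $Q_b''$, I obtain
\[
p_n^{s+1}\rho_0''\equiv \kappa^{(n)}\xi_n\prod\widetilde f^{(n)}_{\beta,\alpha}\;\widetilde Q\pmod{p_n}.
\]
Here $\widetilde Q$ is interpreted after clearing its (bounded) denominators. Since the $w_b$ are integers, this step is formal.

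It then remains to show that each factor is prime to $p_n$:
\begin{enumerate}
\item $v_{p_n}(\kappa^{(n)})=0$, as was used in Proposition~\ref{proposition: p-adic valuation of a_i,h}.
\item $\xi_n$ is prime to $p_n$ by Lemma~\ref{lemma: digamma}.
\item The $\widetilde f^{(n)}_{\beta,\alpha}$ are prime to $p_n$ by construction.
\end{enumerate}
For $\widetilde Q$, the denominators come from the $Q_i(y)$, which carry powers of $T$ and of $\nu-y$ with $|\nu-y|\le r_2\mu_n$. They also come from the factors $j^i$ with $j\le TbB\mu_n$-type bounds. So I would multiply by $D=(T\,\mathrm{lcm}(1,\dots,Tr_2\mu_n\max\Psi_B))^{s}$, which is prime to $p_n$ because all these integers are $o(p_n)$. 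I expect the main obstacle to be the final step, showing $D\widetilde Q$ is not divisible by $p_n$. The plan is to bound $|D\widetilde Q|\le (c_1\mu_n)^{c_2\mu_n}\max_b|w_b|$, with constants depending on $s,B,T,r$ but not on $n$. With $\mu_n=\lfloor\log\log n\rfloor$ and $p_n\sim n/(r_2\mu_n)$, this is $o(p_n)$. Since $D\widetilde Q$ is a nonzero integer of absolute value less than $p_n$, it is not divisible by $p_n$, and therefore $v_{p_n}(\rho_0'')=-(s+1)$.
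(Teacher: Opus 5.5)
Your proposal is correct and is the argument the paper leaves implicit. The congruence $p_n^{s+1}\rho_{0,\theta}\equiv \kappa^{(n)}\xi_n\prod\widetilde f^{(n)}_{\beta,\alpha}\,Q'_\theta \pmod{p_n}$ has a $\theta$-independent unit prefactor, so it passes linearly to $\rho_0''$ and $\widetilde Q$, and the nonvanishing-plus-smallness argument of Proposition~\ref{proposition: p-adic valuation of a_i,h} then finishes the proof.

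Two minor points. Your clearing factor $D$ needs exponent $s+1$ rather than $s$, since the $i$-th term carries denominators of total degree $(s+1-i)+i$; this is harmless, because $D$ stays prime to $p_n$ and of size $e^{O(\mu_n)}$. Your size bound also tacitly uses that the $w_b$ do not depend on $n$, which holds by their construction from the $n$-independent conditions $\sum_b w_b b^{i_j}=0$.
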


    \medskip

    \subsection{Non-vanishing} $ $\\

    The goal of this section is to prove that the assumption $\widetilde{Q}\neq 0$ is satisfied for a suitable choice of $w_b$. More precisely, let $\psi=|\mathcal{Z_B}|-1$ and write $b_0,\dots, b_\psi$ for the elements of $\Psi_B$ in any order. Choose distinct integers $i_0,\dots, i_\psi \in \{2,\dots, s\}$. Consider that the following $(\psi+1)\times(\psi+1)$ matrix:
    \[
        M_n=
        \begin{pmatrix}
            Q_{b_0}'' & b_0^{i_1} & \dots & b_0^{i_\psi} \\
            Q_{b_1}'' & b_1^{i_1} & \dots & b_1^{i_\psi} \\
            \vdots & \vdots & \ddots & \vdots \\
            Q_{b_\psi}'' & b_\psi^{i_1} & \dots & b_\psi^{i_\psi} \\
        \end{pmatrix}.
    \]
    We will prove the following result:
    \begin{proposition}
        \label{proposition: det ist not zero}
        For all sufficiently large $n\in\mathcal{N}$ the matrix $M_n$ is non-singular.
    \end{proposition}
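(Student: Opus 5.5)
Expanding $\det M_n$ along its first column gives
\[
    \det M_n=\sum_{m=0}^{\psi}(-1)^m Q''_{b_m}\,\Delta_m, \qquad \Delta_m=\det\bigl(b_k^{i_\nu}\bigr)_{k\neq m,\;1\le\nu\le\psi}.
\]
Each minor $\Delta_m$ is a generalized Vandermonde determinant in the distinct positive integers $b_k$, $k\neq m$. It is therefore non-zero; indeed it has a fixed sign, because generalized Vandermonde matrices with increasing positive nodes and increasing exponents are totally positive. Since $\Delta_m$ does not depend on $n$, the whole dependence on $n$ sits in the numbers $Q''_{b_m}$, and the plan is to isolate a dominant contribution in them as $n\to\infty$. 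Recall that
\[
    Q_b''=\sum_{i=1}^s \sum_{y=0}^{r_2\mu_n} Q_i(y) \binom{r_2}{y}^{s+1}b^i\sum_{j=1}^{Tby} \frac{\chi(-j)}{j^i}.
\]
The only $n$-dependence is through the range $y\le r_2\mu_n$ (with $\binom{r_2}{y}=0$ for $y>r_2$), together with the internal dependence of $Q_i(y)$ on $\mu_n$ via the sets $J^{(y)}_{b,a}$ and the index range $\nu\in\{0,\dots,r_2\mu_n\}$.

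\textbf{Step 1: identify the dominant term.} I would first show that, after normalization, $Q_i(y)$ has an explicit leading behaviour as $\mu_n\to\infty$. In the inner sum over $\nu$, the contribution of $\nu=y\pm1$ with $\ell'_\nu$ large dominates, and in $G_{\underline\ell}(y)$ the elementary symmetric functions $g_{\ell,b,a}(y)$ of the arithmetic progression $\{-a+bj: j\in J^{(y)}_{b,a}\}$ grow factorially in $\mu_n$ for suitable $\ell$. The index $i$ enters only through the constraint $\sum\ell=s+1-i$. I would therefore aim to show that the term $i=1$ (or more generally the smallest admissible $i$) dominates, in the sense that
\[
    Q_b''=c_n\Bigl(b\,\Theta_b+o(1)\Bigr),
\]
where $c_n\neq0$ is independent of $b$ and $\Theta_b=\sum_{y}\binom{r_2}{y}^{s+1}\sum_{j\le Tby}\chi(-j)/j$, or a similar $n$-independent quantity.

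\textbf{Step 2: reduce to a fixed determinant.} Dividing the first column by $c_n$ and letting $n\to\infty$, $\det M_n/c_n$ converges to the determinant of the matrix whose first column is $b_k\Theta_{b_k}$ and whose other columns are $b_k^{i_\nu}$. It then suffices to show that this limiting determinant is non-zero. One could exploit the freedom in choosing $i_1,\dots,i_\psi$ (for instance taking them large and well separated), so that the limiting determinant is dominated by one sign pattern given by total positivity. Alternatively, I would check that $b\mapsto b\,\Theta_b$ is not in the span of $b\mapsto b^{i_\nu}$ on $\Psi_B$, using that $\Theta_b$ is a character sum with a logarithmic-type growth in $b$.

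\textbf{Main obstacle.} If the limiting determinant happens to vanish, the asymptotic route fails, and I would fall back on exact control. I would use the extra parameter $\mu_n\to\infty$, as announced in the remark preceding the choice of $p_n$, to exhibit $\det M_n$ as a non-trivial function of $\mu_n$, for example a non-zero polynomial or exponential-polynomial in $\mu_n$. Such a function has only finitely many zeros, which would give non-vanishing for all large $n$. The hard part is Step 1: proving precise asymptotics of $Q_i(y)$ in $\mu_n$ from the combinatorial sums over $\Lambda_i$, and checking that no cancellation occurs among the factorially growing terms. This is where I would invest most effort.
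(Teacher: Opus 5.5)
Your overall scheme is the one the paper uses: normalize the first column by a common non-zero factor, pass to a limiting matrix, and compare with a generalized Vandermonde matrix. However, the proposal misses the idea that makes the limit computable, because you work with the wrong weight. Since $n=r_2\mu_np_n+(n-r_2\mu_np_n)$ is the $p_n$-adic expansion of $n$, Lucas's theorem gives $\binom{n}{x+yp_n}\equiv\binom{r_2\mu_n}{y}\binom{n-r_2\mu_np_n}{x}\pmod{p_n}$. So the weight in $Q_b''$ is $\binom{r_2\mu_n}{y}^{s+1}$. The version with $\binom{r_2}{y}$ that you used is a misprint; the correct form is the one recalled before the proof of Lemma~\ref{lemma: asymptotics for Q_b''}. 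Consequently $y$ is not truncated at $r_2$: it runs up to $r_2\mu_n$, and the weights concentrate near $y\approx r_2\mu_n/2\to\infty$. The paper then shows that $Q_b''$ is dominated by the terms with $i=1$ and $y>\log\mu_n$, for three reasons. The power of $\mu_n$ in $Q_i(y)$ drops by $2$ whenever $i$ increases by one. The range $y\le\log\mu_n$ is negligible against the central binomial coefficients. The coefficients $C_1'(y)$ are non-negative, so there is no cancellation in the $y$-sum. On the dominant range, $\sum_{j\le Tby}\chi(-j)/j\to\chi(-1)L(1,\chi)$, which is non-zero by Dirichlet's theorem and, crucially, independent of $b$. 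Hence $Q_b''/Q_1''\to b$ and the limiting first column is $(b_k)_k$. The limiting matrix is then the generalized Vandermonde matrix with the distinct exponents $1,i_1,\dots,i_\psi$, which is invertible by \cite[Lemma 4]{Fischler-Sprang-Zudilin}. In your notation, $\Theta_b$ must be replaced by the $b$-independent constant $\chi(-1)L(1,\chi)$, after which your Step 2 is immediate.

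As written, Step 2 has no valid argument. Your $\Theta_b$ is a finite combination of truncated sums $\sum_{j\le Tby}\chi(-j)/j$ with $y\le r_2$, so it genuinely depends on $b$. Nothing you propose shows that $b\mapsto b\,\Theta_b$ lies outside the span of the functions $b\mapsto b^{i_\nu}$ on $\Psi_B$. The exponents cannot be chosen either: in the proof of Theorem~\ref{theorem: main} they are the indices of the $L$-values assumed to be irrational, so the proposition is needed for every choice of distinct $i_\nu\in\{2,\dots,s\}$. A growth property of $\Theta_b$ in $b$ says nothing about a single determinant over the fixed finite set $\Psi_B$. The fallback of treating $\det M_n$ as an exponential polynomial in $\mu_n$ is unsupported: the sums over $\nu$ already produce harmonic-type expressions in $\mu_n$, and you give no other reason for finitely many zeros.

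Your Step 1 heuristic is also off. Only $\ell_{b,a}\le s$ occur, so $g_{\ell,b,a}(y)\sim b^{\ell}C(y,\ell)\mu_n^{2\ell}$ grows polynomially rather than factorially, while the factors $\binom{s+\ell'}{s}/(T(\nu-y))^{\ell'}$ are bounded. Moving one unit of weight from $\underline{\ell}$ to $\underline{\ell}'$ therefore costs a factor of order $\mu_n^2$. The dominant tuples are those with all $\ell'_\nu=0$, not those with $\nu=y\pm1$ and $\ell'_\nu$ large.
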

    In particular, this proposition ensures that there are integers $(w_b)_{b\in \Psi_B}$ such that for all $j=1,\dots,\psi$
    \[
        \sum_{b\in \Psi_B} w_bQ_b'''\neq 0, \qquad \sum_{b\in \Psi_B} w_b b^{i_j}=0
    \]
    for all sufficiently large $n\in\mathcal{N}$. The first condition tells us that $\widetilde{Q}\neq 0$ for this choice of $w_b$'s, so the conclusion of Corollary~\ref{corollary: p-adic valuation of Q tilde} holds. The invertibility of the matrix $M_n$ will be used at the end of the proof of Theorem~\ref{theorem: main} to eliminate many $L$-values from $r_n''$. \\ 
    The main analytic ingredient for proving Proposition~\ref{proposition: det ist not zero} is the following:
    \begin{lemma}
        \label{lemma: asymptotics for Q_b''}
        For all sufficiently large $n\in\mathcal{N}$ and all $b\in \Psi_B$ we have $Q_1''\neq 0$ and 
        \[
            \lim_{n\to \infty} \frac{Q_b''}{Q_1''}=b.
        \]     
    \end{lemma}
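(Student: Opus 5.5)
The plan is to obtain an asymptotic expansion of $Q_b''$ in the single growing parameter $\mu_n$, with $s$, $B$, $T$, $r$ fixed. We then show that the leading term factors as $b$ times a non-zero quantity independent of $b$.

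First, since $\binom{r_2}{y}=0$ for $y>r_2$, the sum over $y$ in
\[
Q_b''=\sum_{i=1}^s\sum_{y=0}^{r_2} Q_i(y)\binom{r_2}{y}^{s+1}b^i\sum_{j=1}^{Tby}\frac{\chi(-j)}{j^i}
\]
has a bounded number of terms. The only dependence on $n$ is through $Q_i(y)$, that is, through the sets $J_{b,a}^{(y)}$, whose cardinality is $(2r_1+Tr_2)\mu_n$, and through the $\nu$-sum up to $r_2\mu_n$.

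Step one is to analyse $G_{\underline{\ell}}(y)$. Each factor $g_{\ell,b,a}(y)$ is the $\ell$-th elementary symmetric function of the numbers $-a+bj$, $j\in J_{b,a}^{(y)}$. These form an arithmetic progression of length $\asymp\mu_n$ and size $\asymp b\mu_n$. For fixed $\ell\le s$, a comparison with power sums (Newton's identities) gives
\[
g_{\ell,b,a}(y)=\frac{1}{\ell!}\Bigl(\sum_{j\in J^{(y)}_{b,a}}(-a+bj)\Bigr)^{\ell}\bigl(1+O(\mu_n^{-1})\bigr)\ \text{or}\ \asymp \mu_n^{2\ell},
\]
with an explicit leading constant. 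The point to keep track of is how the $b$-dependence of the constants cancels when we take the product over all $\frac{a}{b}\in\mathcal{Z}_B$. This product is the same for every $b$, because $G_{\underline{\ell}}(y)$ runs over the whole zero set, and the only $b$-dependence in $Q_b''$ is the outer factor $b^i\sum_{j\le Tby}\chi(-j)j^{-i}$.

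The $\nu$-sum contributes $\sum_{\nu\ne y}\binom{s+\ell'}{s}(-1)^{\ell'}T^{-\ell'}(\nu-y)^{-\ell'}$. This is $O(\mu_n)$ for $\ell'=0$ and $O(\log\mu_n)$ or $O(1)$ for $\ell'\ge1$. Hence the tuples in $\Lambda_i$ that put all weight $s+1-i$ on the $\ell_{b,a}$ dominate, and $Q_i(y)$ grows like $\mu_n^{2(s+1-i)+1}$ times an explicit constant $\gamma_i$ independent of $y$ up to $1+o(1)$.

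Consequently the term $i=1$ dominates $Q_b''$ by a factor $\mu_n^2$ over every $i\ge2$. This is the step that should produce the factor $b=b^1$, while the terms $b^i$ with $i\ge2$, which would give ratios $b^i$, are of lower order.

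The main obstacle is the surviving $i=1$ coefficient
\[
b\,\gamma_1\sum_{y=0}^{r_2}\binom{r_2}{y}^{s+1}\sum_{j=1}^{Tby}\frac{\chi(-j)}{j}.
\]
Its inner sum still depends on $b$, and it might vanish. I would first try to show, using $\sum_k a_{1,k}=0$ and the symmetry $a_{i,n-k}=(-1)^{i+\sigma(\chi)+1}a_{i,k}$ transported to $Q_1(y)$ (so $Q_1(r_2\mu_n-y)$ relates to $Q_1(y)$), that the $y$-dependence of $Q_1(y)$ at the leading order arranges the $y$-sum into a telescoping combination. In that combination only full periods of $\chi$ appear, via $\sum_{j=1}^{T}\chi(-j)=0$, leaving a $b$-independent constant times $b$.

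If this cancellation kills the leading order entirely, I would go to the next order in $\mu_n$, where the $y$-dependence of $J^{(y)}_{b,a}$ (a shift by $y$) enters linearly. I would check that this yields $b\cdot c$ with $c\neq0$, using an explicit sign or positivity computation as in Lemma~\ref{lemma: digamma}. In either case, non-vanishing of the constant gives $Q_1''\ne0$ for large $n\in\mathcal{N}$, and dividing the expansions gives $Q_b''/Q_1''\to b$.
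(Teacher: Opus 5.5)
Your analysis of $g_{\ell,b,a}(y)$ via power sums agrees with the paper. So do the dominance of the tuples with all $\ell'_\nu=0$ in $Q_i(y)$ and the fact that the terms with $i\ge 2$ lose a factor $\mu_n^2$. The argument breaks at your first step, and this leaves a genuine gap.

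\textbf{The misread weight.} The factor $\binom{r_2}{y}^{s+1}$ in the formula for $Q_b''$ displayed before the statement is a misprint. By Lucas's theorem, as in Lemma~\ref{lemma: properties of auxiliary primes}(4), the top $p_n$-adic digit of $n$ is $r_2\mu_n$. So the correct weight is $\binom{r_2\mu_n}{y}^{s+1}$ with $0\le y\le r_2\mu_n$, which is the form used in the proof. The $y$-sum therefore does not have boundedly many terms.

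Under your bounded-$y$ reading the lemma is not merely hard but false in general. For $r_2=1$ your leading coefficient is $b\gamma_1\sum_{j=1}^{Tb}\chi(-j)/j$. Take $T=4$, $\chi$ the non-trivial character modulo $4$, and $b=2$: the limiting ratio is $2\cdot\frac{1-1/3+1/5-1/7}{1-1/3}=\frac{76}{35}\neq 2$. So the ``main obstacle'' you identify cannot be removed by the remedies you sketch:
\begin{itemize}
\item the symmetry $y\leftrightarrow r_2\mu_n-y$ leaves your range of $y$;
\item full periods of $\chi$ contribute $0$, not a non-zero constant;
\item passing to the next order in $\mu_n$ cannot change a leading term that is already non-zero with the wrong $b$-dependence.
\end{itemize}

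\textbf{The missing idea.} With the correct weights, the mass of the $y$-sum sits near $y\approx r_2\mu_n/2\to\infty$, so the $b$-dependent partial sums stabilise. The paper splits the $i=1$ term at $y=\lfloor\log\mu_n\rfloor$.
\begin{itemize}
\item For $y\le\log\mu_n$ one has $\binom{r_2\mu_n}{y}\le(r_2\mu_n)^{\log\mu_n}$. This is negligible against the central mass, which is of order $2^{(s+1)r_2\mu_n}\mu_n^{-(s+1)/2}$.
\item For $y>\log\mu_n$ one has $\sum_{j=1}^{Tby}\chi(-j)/j=(-1)^{\sigma(\chi)}L(1,\chi)+O(1/\log\mu_n)$ uniformly. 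This is a constant independent of $b$, and it is non-zero by Dirichlet's theorem.
\end{itemize}
The leading constants $C_1'(y)$ of $Q_1(y)$ are positive, so there is no cancellation across $y$. These constants do depend on $y/\mu_n\in[0,r_2]$, so they are not $y$-independent as you assert; only their positivity is needed. Altogether
\[
Q_b''\sim(-1)^{\sigma(\chi)}L(1,\chi)\,b\,\mu_n^{2s+1}\sum_{y>\log\mu_n}C_1'(y)\binom{r_2\mu_n}{y}^{s+1},
\]
which gives both $Q_1''\neq 0$ and $Q_b''/Q_1''\to b$ at once. The non-vanishing comes from $L(1,\chi)\neq 0$. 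It does not come from a periodicity cancellation or from a sign argument as in Lemma~\ref{lemma: digamma}.
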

    We first observe that Proposition~\ref{proposition: det ist not zero} follows immediately from this lemma.
    \begin{proof}[Proof of Proposition~\ref{proposition: det ist not zero}]
    Observe that, by Lemma\ref{lemma: asymptotics for Q_b''},
    \[
        \det M_n = Q_1'' \det 
        \begin{pmatrix}
            \frac{Q_{b_0}''}{Q_1''} & b_0^{i_1} & \dots & b_0^{i_\psi} \\
            \frac{Q_{b_1}''}{Q_1''} & b_1^{i_1} & \dots & b_1^{i_\psi} \\
            \vdots & \vdots & \ddots & \vdots \\
            \frac{Q_{b_\psi}''}{Q_1''} & b_\psi^{i_1} & \dots & b_\psi^{i_\psi} \\
        \end{pmatrix} =
        Q_1'' \det 
        \begin{pmatrix}
            b_0+o(1) & b_0^{i_1} & \dots & b_0^{i_\psi} \\
            b_1+o(1) & b_1^{i_1} & \dots & b_1^{i_\psi} \\
            \vdots & \vdots & \ddots & \vdots \\
            b_{\psi}+o(1) & b_\psi^{i_1} & \dots & b_\psi^{i_\psi} \\
        \end{pmatrix}.    
    \]
    By \cite[Lemma 4]{Fischler-Sprang-Zudilin}, the matrix 
    \[
        \begin{pmatrix}
            b_0 & b_0^{i_1} & \dots & b_0^{i_\psi} \\
            b_1 & b_1^{i_1} & \dots & b_1^{i_\psi} \\
            \vdots & \vdots & \ddots & \vdots \\
            b_{\psi} & b_\psi^{i_1} & \dots & b_\psi^{i_\psi} \\
        \end{pmatrix}
    \]
    is invertible, because $2\le i_j \le s$. Moreover, the determinant of 
    \[
        \begin{pmatrix}
            o(1) & b_0^{i_1} & \dots & b_0^{i_\psi} \\
            o(1) & b_1^{i_1} & \dots & b_1^{i_\psi} \\
            \vdots & \vdots & \ddots & \vdots \\
            o(1) & b_\psi^{i_1} & \dots & b_\psi^{i_\psi} \\
        \end{pmatrix}
    \]
    goes to zero as $n\to \infty$, because the $b_j$'s and $i_j$'s do not depend on $n$. The claim follows.
    \end{proof}
    Before proving Lemma~\ref{lemma: asymptotics for Q_b''}, we briefly recall the definition of the main quantities for convenience. We have
    \[
        Q_b''=\sum_{i=1}^s 
        \sum_{y=0}^{r_2\mu_n} Q_i(y) \binom{r_2\mu_n}{y}^{s+1}b^i\sum_{j=1}^{Tby} \frac{\chi(-j)}{j^i},
    \]
    where 
    \begin{equation}
        \label{eq: Q_i(y)}
        Q_i(y)=\sum_{\underline{\lambda}\in \Lambda_i} G_{\underline{\ell}}(y)
        \sum_{\substack{\nu=0 \\ \nu \neq y}}^{r_2\mu_n}(-1)^{\ell_{\nu}'}\binom{s+\ell_{\nu}'}{s}\frac{1}{T^{\ell'_{\nu}}(\nu-y)^{\ell'_\nu}}.
    \end{equation}
    Here, the set $\Lambda_i$ consists of the tuples
    \begin{equation}
        \label{eq: Lambda_i}
        \underline{\lambda}=(\underline{\ell}',\underline{\ell}), \qquad \underline{\ell}'=(\ell'_{\nu})_{\nu}, \qquad \underline{\ell}=(\ell_{b,a})_{\frac{a}{b}\in \mathcal{Z}_B},
    \end{equation}
    with $\nu\in\{0,\dots, r_2\mu_n\}\setminus \{y\}$, such that 
    \[
        \sum_{\substack{\nu=0 \\ \nu \neq y}}^{r_2\mu_n} \ell'_\nu + \sum_{b,a}\ell_{b,a} = s+1-i.
    \]
    Moreover, recall that
    \[
        G_{\underline{\ell}}(y)=\prod_{\frac{a}{b}\in \mathcal{Z}_B} g_{\ell_{b,a},b,a}(y), \qquad g_{\ell,b,a}(y)= \sum_{\substack{J\subseteq J_{b,a}^{(y)} \\ \#J=\ell }} \prod_{j\in J} (-a+bj)
    \]
    with 
    \[
        J_{b,a}^{(y)}=
        \begin{cases}
            \{m\in \Z \mid 1-r_1\mu_n-y\le m\le (r_1+Tr_2)\mu_n-y\} & \text{if $b\neq 1$}, \\
            \{m\in \Z \mid 1-r_1\mu_n-y\le m\le (r_1+Tr_2)\mu_n+1-y\}\setminus\{1\} & \text{if $b= 1$}.
        \end{cases}
    \]
    \begin{proof}[Proof of Lemma~\ref{lemma: asymptotics for Q_b''}]
        We start with the behaviour of $G_{\underline{\ell}}(y)$. Fix $y\in \{0,\dots, r_2\mu_n\}$ and $\theta=\frac{a}{b}\in \mathcal{Z}_B$ with $\gcd(a,b)=1$. Assume $b\neq 1$. Consider the power sum 
        \[
            P_k = \sum_{j\in J^{(y)}_{b,a}} (-a+bj)^k.
        \]
        Approximating $P_k$ with the integral of $t\mapsto (-a+bt)^k$ over a suitable interval, we obtain
        \[
            P_k \sim \frac{b^k}{k+1} \mu_n^{k+1} \left( \left(r_1+Tr_2-\frac{y}{\mu_n}\right)^{k+1}-(-1)^{k+1}\left(r_1+\frac{y}{\mu_n}\right)^{k+1} \right).
        \]
        In particular $P_1^k \sim c(y) \mu_n^{2k}$ for some constant $c(y)>0$. Note that $0\le y/\mu_n\le r_2$, so it does not disrupt the asymptotics. It follows that $P_k=o(P_1^k)$ for all $k\ge 2$. \\
        Note that $g_{\ell,b,a}(y)$ is the $\ell$-th symmetric polynomial in $-a+bj$ with $j\in J^{(y)}_{b,a}$, hence
        \[
            g_{\ell,b,a}(y)= (-1)^\ell \sum_{\substack{m_1+2m_2+\dots \ell m_\ell=\ell \\ m_1\ge 0}} \prod_{i=1}^\ell \frac{(-P_i)^{m_i}}{m_i! i^{m_i}}.
        \]
        Since $P_k=o(P_1^k)$ for $k\ge 2$, we deduce that 
        \[
            g_{\ell,b,a}(y) = b^\ell C(y,\ell)
            \mu_n^{2\ell}\left(1+O\left(\frac{1}{\mu_n}\right)\right),
        \]
        where
        \[
            C(y,\ell) = \frac{1}{2 ^\ell \ell!}\left( \left(r_1+Tr_2-\frac{y}{\mu_n}\right)^{2}-\left(r_1+\frac{y}{\mu_n}\right)^{2} \right)^\ell.
        \]
        The case $b=1$ is analogous up to a minor redefinition of some constants. Note that, if $T\ge 3$, then $C(y,\ell)>0$. If $T=2$ instead, $C(y,\ell)$ vanishes when $y=r_2\mu_n$. This particular case will not affect the asymptotic estimates that follow. \\
        As a result, given $\underline{\ell}=(\ell_{\beta,\alpha})_{\frac{\alpha}{\beta}\in \mathcal{Z}_B}$, we have
        \[
            G_{\underline{\ell}}(y) \sim \mu_n ^{2\sum_{\frac{\alpha}{\beta}}\ell_{\beta,\alpha}}\prod_{\frac{\alpha}{\beta}\in \mathcal{Z}_B} \beta^{\ell_{\beta,\alpha}}C(y,\ell_{\beta,\alpha}). 
        \]
        We move to $Q_i(y)$. The indices $\ell_\nu'$ in \eqref{eq: Lambda_i} are bounded above by $s+1-i$, so the sum over $\nu$ in \eqref{eq: Q_i(y)} has absolute value at most $\mu_n$, up to a constant. Given the asymptotic behaviour of $G_{\underline{\ell}}(y)$, it follows that $Q_i(y)$ is dominated by the tuples in $\Lambda_i$ of the form \eqref{eq: Lambda_i} such that $\ell_\nu'=0$ for all $\nu$. So,
        \[
            Q_i(y)=C_i'(y)\mu_n ^{2(s+1-i)+1}\left(1+O\left(\frac{1}{\mu_n}\right) \right)
        \]
        where
        \[
            C_i'(y)=r_2\sum_{\ell_{\beta,\alpha}}\prod_{\frac{\alpha}{\beta}\in \mathcal{Z}_B} \beta^{\ell_{\beta,\alpha}}C(y,\ell_{\beta,\alpha})
        \]
        and the sum runs over $\sum_{\frac{\alpha}{\beta}\in \mathcal{Z}_B} \ell_{\beta,\alpha}=s+1-i$. Note that $C_i'(y)>0$ because $C(y,\ell)>0$ for all $\ell$, when $T\ge 3$. In case $T=2$ one of the $C(y,\ell)$'s will vanish, but the overall sum remains positive. \\
        Fix $b\in \Psi_B$. Inserting the asymptotic estimate of $Q_i(y)$ in the definition of $Q_b''$, we get
        \[
            Q_b''= 
            \sum_{i=1}^s
            b^{i}
            \mu_n ^{2(s+1-i)}
            \sum_{y=0}^{r_2\mu_n} \left(1+O\left(\frac{1}{\mu_n}\right)\right) C_i'(y)\binom{r_2\mu_n}{y}^{s+1}\sum_{j=1}^{Tby} \frac{\chi(-j)}{j^i}.
        \]
        Note that the function implicit in the $O$-term depends on $i$ and $y$. We now split this sum into 
        \begin{enumerate}
        \item an error term:
        \[
            E= \sum_{i=1}^s
        b^{i}
        \mu_n ^{2(s+1-i)+1}
        \sum_{y=0}^{r_2\mu_n}  O\left(\frac{1}{\mu_n}\right) C_i'(y)\binom{r_2\mu_n}{y}^{s+1}\sum_{j=1}^{Tby} \frac{\chi(-j)}{j^i}.
        \]
        \item the terms for $i\ge 2$. Define
            \[
                S_0=\sum_{i=2}^s
        b^{i}
        \mu_n ^{2(s+1-i)+1}
        \sum_{y=0}^{r_2\mu_n}  C_i'(y)\binom{r_2\mu_n}{y}^{s+1}\sum_{j=1}^{Tby} \frac{\chi(-j)}{j^i}.
            \]
        \item the term for $i=1$ and $y\le \lfloor \log \mu_n \rfloor $. Define 
        \[
            S_1=b\mu_n^{2s+1}\sum_{y=0}^{\lfloor\log\mu_n\rfloor}  C_1'(y)\binom{r_2\mu_n}{y}^{s+1}\sum_{j=1}^{Tby} \frac{\chi(-j)}{j}.
        \]
        \item the term for $i=1$ and $y\ge \lfloor \log \mu_n \rfloor+1$. Define 
        \[
            S_2=b\mu_n^{2s+1}\sum_{y=\lfloor\log\mu_n\rfloor+1}^{r_2\mu_n}  C_1'(y)\binom{r_2\mu_n}{y}^{s+1}\sum_{j=1}^{Tby} \frac{\chi(-j)}{j}.
        \]
    \end{enumerate}
    We will prove that $S_2$ dominates the other terms in the list. Let us start with $S_0$. Note that for $i\ge 2$
    \[
        \left| \sum_{j=1}^{Tby} \frac{\chi(-j)}{j^i} \right| \le \zeta(i)
    \]
    for all $y\ge 1$. Thus, the $i$-th term in $S_0$ is bounded by
    \[
        b^i \zeta(i) \mu_n^{2(s+1-i)+1} \sum_{y=0}^{r_2\mu_n}  C_i'(y)\binom{r_2\mu_n}{y}^{s+1}.
    \]
    All $C_i'(y)$ are bounded above by a constant, and the leading binomial coefficient is the middle one. Since the number of terms in the sum is $r_2\mu_n+1$, we have 
    \begin{align*}
        |S_0| \le c_1\mu_n^{2s} 2^{(s+1)r_2\mu_n}\sqrt{\frac{2^{s+1}}{(\pi r_2\mu_n)^{s+1}}}
    \end{align*}
    for some constant $c_1>0$. \\
    We now consider $S_1$. Note that for $y\le \lfloor\log\mu_n\rfloor$ we have
    \[
        \binom{r_2\mu_n}{y} \le \binom{r_2\mu_n}{\lfloor\log\mu_n\rfloor} \le \frac{(r_2\mu_n)^{\log\mu_n}}{(\lfloor\log\mu_n\rfloor)!}.
    \]
    Hence,
    \[
        |S_1|\le c_2\log\mu_n(\log\mu_n+1) \left( \frac{(r_2\mu_n)^{\log\mu_n}}{(\lfloor\log\mu_n\rfloor)!} \right)^{s+1}\mu_n^{2s+1}
    \]
    for some constant $c_2>0$. \\
    Let us move to $S_2$. For $n\to \infty$, we have $y\to \infty$ in the second sum in the definition of $S_2$, so 
    \[
        \sum_{j=1}^{Tby}\frac{\chi(-j)}{j} = (-1)^{\sigma(\chi)}L(1,\chi)+O\left(\frac{1}{Tby}\right).
    \]
    Observe that $L(1,\chi)\neq 0$ by Dirichlet's theorem. We deduce that
    \[
        S_2= b\mu_n^{2s+1}\left( (-1)^{\sigma(\chi)}L(1,\chi) + O\left(\frac{1}{\log \mu_n} \right)\right)
        \sum_{y=\lfloor\log\mu_n\rfloor+1}^{r_2\mu_n}  C_1'(y)\binom{r_2\mu_n}{y}^{s+1}
    \]
    The last sum is positive, as all $C_1'(y)$'s are positive. Moreover, there is a constant $c_3>0$ such that $C_1'(y)>c_3$ for all $y\in \{0,\dots, r_2\mu_n\}$. Thus,
    \[
        \sum_{y=\lfloor\log\mu_n\rfloor+1}^{r_2\mu_n}  C_1'(y)\binom{r_2\mu_n}{y}^{s+1}\ge  
        c_32^{(s+1)r_2\mu_n}\sqrt{\frac{2^{s+1}}{(\pi r_2\mu_n)^{s+1}}}.
    \]
    As $n\to \infty$, the exponential growth of $S_2$ coming from the central binomial coefficient dominates over $S_1$, so $S_1=o(S_2)$. On the other hand, this growth factor is the same as the one appearing in $S_0$. However, $S_0$ contains at most a factor $\mu_n^{2s-1}$, while $S_2$ at least a factor $\mu_n^{2s}$. This implies that $S_0=o(S_2)$. Finally, the error term $E$ can be estimated similarly to $S_2$. After removing the dependence on $y$, it follows that $|E|=O(\mu_n^{-1})|S_2|$, so $E=o(S_2)$. As a result, for $n\to\infty$,
    \[
        Q_b''\sim 
        (-1)^{\sigma(\chi)}bL(1,\chi)
        \mu_n ^{2s+1}
        \sum_{y=\lfloor \log\mu_n\rfloor +1}^{r_2\mu_n}  C_1'(y)\binom{r_2\mu_n}{y}^{s+1}.
    \]
    Since $L(1,\chi)\neq 0$, this shows that $Q_b''\neq 0$ for all $b\in \Psi_B$ and all sufficiently large $n$. Moreover, the dependence on $b$ is dictated only by the factor $b$. Thus,
    \[
        \lim_{n\to \infty} \frac{Q_b''}{Q_1''}=b.
    \] 
    This concludes the proof of Lemma~\ref{lemma: asymptotics for Q_b''}.
    \end{proof}

%% file: 5_Analytic_estimates.tex
The goal of this section is to study the growth behaviour of the linear forms $r_{n,\theta}$ for $\theta\in \mathcal{Z}_B$. Since we rely on the irrationality criterion of Lemma~\ref{lemma: irrationality criterion}, we will only need an upper bound for their absolute value. The computations closely follow \cite{Lai-Yu} with some minor modifications. \\
For $j=0,1,\ldots,T-1$ and $\theta\in\mathcal Z_B$, define
\[
r_{n,j,\theta}:=\sum_{k=0}^{\infty}\hat{R}_n(kT+j+\theta),
\qquad
\hat{R}_n(t):=R_n(t+rn).
\]
\begin{proposition}
\label{analysis-lem}
We have
\[
\lim_{n\to\infty}(r_{n,j,\theta})^{\frac1n}=g(x_0),
\]
where
\[
g(X)=A_{1}(B)A_2(B)\mathrm{den}(r)^{(2r+T)|\mathcal{Z}_B|}(XT+2r+T)^{(2r+T)|\mathcal{Z}_B|}
\left(\frac{(X+\frac{r}{T})^{\frac{r}{T}}}{(X+1+\frac{r}{T})^{\frac{r}{T}+1}}\right)^{s+1}\frac{1}{T^{s+1}},
\]
and $x_0$ is the unique positive real solution of the equation $f(x)=1$, with
\[
f(X)=\left(\frac{XT +2 r+T}{X T}\right)^{|\mathcal{Z}_B| T}\left(\frac{XT+r}{XT+T+r}\right)^{s+1}.
\]
\end{proposition}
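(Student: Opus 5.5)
The plan is to follow the saddle-point-free Laplace-type argument of \cite{Lai-Yu}, which gives $n$-th root asymptotics of sums of positive terms. The first step is to check that for $t=kT+j+\theta$ with $k\ge 0$ all terms $\hat R_n(t)$ are non-negative, and vanish for $k$ small. Indeed, $\hat R_n(t)$ has numerator $t\prod_{\eta}\prod_{m=0}^{(2r+T)n-1}(t+m+\eta)$ and denominator $\prod_{k'=0}^n(t+rn+Tk')^{s+1}$. For $t>0$ every factor is positive, so $r_{n,j,\theta}$ is a sum of positive terms. This sidesteps the cancellation issues that the character causes elsewhere. It remains to find the exponential growth rate of the largest term and to show that the number of relevant terms and the tail contribute only subexponentially.

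Next, write $k=xn$ with $x\ge 0$ and express $\hat R_n(xnT+j+\theta)$ through Gamma functions:
\[
\prod_{m=0}^{(2r+T)n-1}(t+m+\eta)=\frac{\Gamma(t+\eta+(2r+T)n)}{\Gamma(t+\eta)}, \qquad \prod_{k'=0}^{n}(t+rn+Tk')=T^{n+1}\frac{\Gamma\bigl(\frac{t+rn}{T}+n+1\bigr)}{\Gamma\bigl(\frac{t+rn}{T}\bigr)}.
\]
Stirling's formula, applied uniformly for $x$ in compact subsets of $(0,\infty)$, then gives $\hat R_n(xnT+\cdots)^{1/n}\to h(x)$. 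The limit $h(x)$ combines several pieces. The constant $C_n$ contributes
\[
C_n^{1/n}\to A_1(B)A_2(B)\,\mathrm{den}(r)^{(2r+T)|\mathcal Z_B|}\,e^{(s+1)-(2r+T)|\mathcal Z_B|}\cdot\frac{n^{s+1}}{n^{(2r+T)|\mathcal Z_B|}},
\]
where the powers of $n$ must be tracked together with those from the numerator and denominator. The numerator contributes $\bigl((xT+2r+T)^{xT+2r+T}/(xT)^{xT}\bigr)^{|\mathcal Z_B|}$. The denominator contributes $\bigl(T(x+1+r/T)^{x+1+r/T}/(x+r/T)^{x+r/T}\bigr)^{s+1}$. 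The powers of $n$ and $e$ cancel thanks to the degree count $|\mathcal Z_B|(2r+T)n$ against $(s+1)n$.

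The key identity to verify is $\frac{d}{dx}\log h(x)=T\log f(x)$, which follows from differentiating the $u\log u$ terms. Since $f$ is increasing from $0$ to $1$ (as $x\to0^+$ the first factor blows up, so in fact $f$ decreases from $+\infty$ to $1^{-}$ because $s+1>(2r+T)|\mathcal Z_B|$), the equation $f(x)=1$ has the unique positive root $x_0$, and $h$ attains its maximum there. Substituting $f(x_0)=1$ eliminates the $x$-exponent terms. This leaves exactly $g(x_0)$:
\[
\Bigl(\frac{(x_0T+2r+T)}{x_0T}\Bigr)^{x_0T|\mathcal Z_B|}=\Bigl(\frac{x_0+1+r/T}{x_0+r/T}\Bigr)^{x_0(s+1)}.
\]

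Finally, the sum must be controlled. The lower bound comes from the single term $k=\lfloor x_0 n\rfloor$. For the upper bound, bound each term by $(\max h+\epsilon)^n$ on $k\le Cn$, uniformly via Stirling with explicit error terms. This yields at most $Cn+1$ terms. The tail $k>Cn$ is dominated by $\hat R_n(t)\le c^n t^{-2}$, using $\deg\le -2$, and is negligible. The main obstacle is the uniformity of Stirling near $x=0$ and for large $x$. I would handle this with the monotone bounds $\log\Gamma$-convexity as in \cite[Section 4]{Lai-Yu}, treating $k\le\epsilon n$ separately through the crude bound $h(x)\to$ a value smaller than $h(x_0)$.
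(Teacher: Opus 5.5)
Your argument is correct in outline, but it is organized differently from the paper's.

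\textbf{Your route.} You compute the full exponential profile $h(x)=\lim_n \hat R_n(xnT+j+\theta)^{1/n}$. You note that $h(x)=g(x)f(x)^x$ and $\frac{d}{dx}\log h=\log f$. Then you run a Laplace-type argument: the lower bound comes from the single term $k=\lfloor x_0 n\rfloor$, and the upper bound is $(Cn+1)\max_{[0,C]}h$ plus a tail.

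\textbf{The paper's route.} The paper never writes down $h$. It uses the exact ratio $\hat R_n((k+1)T+j+\theta)/\hat R_n(kT+j+\theta)\to f(k/n)$. This ratio lets it dominate the ranges $k\le (x_0-3\varepsilon)n$ and $(x_0+3\varepsilon)n\le k\le (A(\varepsilon)-\varepsilon)n$ by geometric series anchored at terms near $x_0n$. Stirling is applied only in a window around $x_0 n$, and $k\ge (A(\varepsilon)-\varepsilon)n$ is handled by a crude bound.

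\textbf{Comparison.} Your route makes it transparent why the rate is $g(x_0)$ and why $x_0$ is the root of $f=1$. Indeed, the paper's identity $f(x_0)^{x_0}g(x_0)=g(x_0)$ is just $h(x_0)=g(x_0)$. The price is that you need Stirling uniformly for $0\le k\le Cn$. This is harmless: all Gamma arguments stay $\ge 2\min\mathcal Z_B>0$, so $\frac1n\log \hat R_n(kT+j+\theta)=\log h(k/n)+O(\frac{\log n}{n})$ uniformly there, including $k=O(1)$, and $h$ is continuous on $[0,C]$. The paper's route only needs the ratio bounds uniformly on ranges, plus Stirling near $x_0n$.

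\textbf{Points to fix.}

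\emph{Small slips.} The derivative is $\frac{d}{dx}\log h=\log f$, not $T\log f$. The power of $e$ in $C_n^{1/n}$ is $e^{(2r+T)|\mathcal Z_B|-(s+1)}$, not its inverse. Neither slip affects the conclusion.

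\emph{Behaviour of $f$.} Your description of $f$ is self-contradictory, since a function cannot decrease monotonically from $+\infty$ and tend to $1$ from below. In fact $f$ falls from $+\infty$ to a minimum below $1$ and then rises back to $1^-$. What you actually need is the sign pattern $f>1$ on $(0,x_0)$ and $f<1$ on $(x_0,\infty)$. This follows from uniqueness of the root (Lemma~\ref{prop-function}) together with the behaviour of $f$ at $0$ and at $\infty$.

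\emph{Positivity of the terms.} The terms $\hat R_n(kT+j+\theta)$ with $k\ge 0$ never vanish; they are all strictly positive. What vanishes is $R_n(t+\theta)$ for $1\le t\le rn-1$, and that is what justifies the shift to $\hat R_n$.

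\emph{The tail.} The bound $\deg R_n\le -2$ only gives summability in $k$. To make $\sum_{k>Cn}$ negligible you must show that the constant $c$ in $\hat R_n(t)\le c^n t^{-2}$ can be taken below $g(x_0)$. This uses that the degree is $-(s+1-(2r+T)|\mathcal Z_B|)n-s$, together with $C_n^{1/n}\asymp n^{s+1-(2r+T)|\mathcal Z_B|}$ and a large choice of $C$. It is exactly the paper's $A(\varepsilon)$ step.
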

Before the proof, note the following immediate consequence for our linear forms:
\begin{corollary}
    For all $\theta\in \mathcal{Z}_B$, the following inequality holds:
    \[
        \limsup_{n\to\infty}|r_{n,\theta}|^{\frac1n}\leq g(x_0).
    \]
\end{corollary}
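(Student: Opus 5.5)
The corollary should follow by expressing $r_{n,\theta}$ as a finite combination of the sums $r_{n,j,\theta}$ and then invoking Proposition~\ref{analysis-lem}. The plan has three steps.

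\textbf{Step 1: rewrite $r_{n,\theta}$.} By definition,
\[
    r_{n,\theta}=\sum_{t=1}^\infty R_n(t+\theta)\chi_\theta(t).
\]
Write $t=kT+j'$ with $j'\in\{1,\dots,T\}$. Since $\chi_\theta$ is $T$-periodic, $\chi_\theta(t)=\chi_\theta(j')$. After grouping by residue, each inner sum is a sum of $R_n$ along an arithmetic progression of step $T$. Shifting by $rn$ turns this into $\hat R_n(t)=R_n(t+rn)$ evaluated at $kT+j+\theta$ with $j\in\{0,\dots,T-1\}$.

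The shift by $rn$ needs care, because $rn$ need not be a multiple of $T$. Recall, however, that $n\in\mathcal N$ is divisible by $P_B$, hence by $2Tr_2$, so $rn=r_1 n/r_2$ is an integer divisible by $T$. The residue classes are therefore preserved. The difference between $\sum_{t\ge 1}$ and $\sum_{t\ge -rn}$ consists of finitely many terms, and these vanish: $R_n(t+\theta)$ has zeros at $t+\theta=-rn+k+\theta'$ for $0\le k\le (2r+T)n-1$ and $\theta'\in\mathcal Z_B$, which covers the integers $t$ with $-rn\le t\le 0$ once we take $\theta'=\theta$. This gives
\[
    r_{n,\theta}=\sum_{j=0}^{T-1}\epsilon_j\, r_{n,j,\theta}
\]
for some $\epsilon_j\in\{-1,0,1\}$ depending only on $\chi_\theta$ and $j$. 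In this identity we take $r_{n,j,\theta}$ over $k\ge 0$, possibly after renaming $j$ and absorbing a boundary term.

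\textbf{Step 2: triangle inequality.} From Step 1,
\[
    |r_{n,\theta}|\le \sum_{j=0}^{T-1}|r_{n,j,\theta}|\le T\max_j |r_{n,j,\theta}|.
\]
Taking $n$-th roots, $T^{1/n}\to 1$. Proposition~\ref{analysis-lem} gives $\lim |r_{n,j,\theta}|^{1/n}=g(x_0)$ for each of the finitely many $j$, so the $\limsup$ of the maximum is at most $g(x_0)$. This yields the claim. If the proposition is stated without absolute values, I would note that for large $n$ the terms $\hat R_n(kT+j+\theta)$ have constant sign, since $\hat R_n$ has no zeros or poles to the right of the region considered. Alternatively, a limit of $n$-th roots of the terms implies the same for their absolute values.

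\textbf{Main obstacle.} The only delicate point is the bookkeeping in Step 1. We must check that the index shift between $t$ and $kT+j+rn$ is exact, using $T\mid rn$, which follows from $P_B\mid n$. We must also check that the finitely many discarded terms are genuinely zeros of $R_n(t+\theta)$ and not poles; the poles lie at $t+\theta\in\{0,-T,\dots,-Tn\}$, whereas $t\ge 1$ keeps $t+\theta>0$. If the discarded range is instead nonempty on the other side, those finitely many terms still vanish by the zero structure, so the identity holds either way.
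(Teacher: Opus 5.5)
Your overall strategy is the paper's own. The paper uses the identity $r_{n,\theta}=\sum_{j=0}^{T-1}\chi_\theta(j)\,r_{n,j,\theta}$ and notes that each $r_{n,j,\theta}$ is a sum of positive terms, since every factor of $\hat R_n(t)$ is positive for $t>0$. The triangle inequality together with Proposition~\ref{analysis-lem} then gives the corollary immediately, so your Step 2 is fine.

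The verification of the identity in your Step 1, which you yourself flag as the delicate point, is wrong as written.

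\textbf{Direction of the shift.} Since $\hat R_n(u)=R_n(u+rn)$, the term $\hat R_n(kT+j+\theta)$ equals $R_n(t+\theta)$ with $t=kT+j+rn$. Using $T\mid rn$, which you justified correctly, this gives $\sum_j\chi_\theta(j)r_{n,j,\theta}=\sum_{t\ge rn}\chi_\theta(t)R_n(t+\theta)$. So the shift goes to the right, and the terms to discard are $1\le t\le rn-1$, not $-rn\le t\le 0$.

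\textbf{Location of the zeros.} The zeros of $R_n$ are at $u=rn$ and at $u=rn-k-\theta'$ with $0\le k\le (2r+T)n-1$ and $\theta'\in\mathcal Z_B$, not at $-rn+k+\theta'$. Choosing $\theta'=\theta$ then gives $t=rn-k-2\theta$, which is not an integer in general. The correct argument splits into two cases:
\begin{enumerate}
\item For $\theta\neq 1$, take $\theta'=1-\theta\in\mathcal Z_B$. This gives $R_n(t+\theta)=0$ for $t=rn-1-k$, covering $1\le t\le rn-1$.
\item For $\theta=1$, the factor $t-rn$ gives $t=rn-1$, and $\theta'=1$ gives $t=rn-2-k$, covering the rest.
\end{enumerate}

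\textbf{Your version is false, not just mislabelled.} For $\theta=1$ the range $-rn\le t\le 0$ contains $t=-1$, where $t+\theta=0$. There the numerator has only a simple zero while the denominator contributes $t^{s+1}$, so $R_n$ has a pole of order $s$. Your claim that ``these terms vanish'' therefore fails. Your closing hedge (``the other side'') points at the right range but gives no valid reason.

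With the correct bookkeeping the identity holds exactly with $\epsilon_j=\chi_\theta(j)$. No renaming of $j$ and no boundary term is needed.
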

We first collect some information on the function $f$ in the statement of Proposition~\ref{analysis-lem}.
\begin{lemma}\label{prop-function}
Let $f(x)$ be as in Proposition~\ref{analysis-lem}, defined on $(0,\infty)$. Then:
\begin{enumerate}
\item[(1)] there exists a unique $x_0\in (0,\infty)$ such that $f(x_0)=1$, $f(x)>1$ on $(0,x_0)$ and $f(x)<1$ on $(x_0,\infty)$. Moreover,
\[
x_0<\frac{r(r+T)|\mathcal{Z}_B|}{s+1-(2r+T)|\mathcal{Z}_B|}.
\]
\item[(2)] if we fix $r$ and $T$ and assume that $B=c \left(\frac{s}{\log s}\right)^{1/2}$ for some constant $c>0$, then, as $s\to\infty$, we have
\[
\lim_{s\to\infty}g(x_0)^{1/(s+1)}=\frac{1}{T}\exp\left(\frac{\zeta(2)\zeta(3)}{4 \zeta(6)}c^2(2r+T)\right)\frac{(\frac{r}{T})^{\frac{r}{T}}}{(1+\frac{r}{T})^{1+\frac{r}{T}}}.
\]
\end{enumerate}
\end{lemma}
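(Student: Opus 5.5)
Part (1) is a monotonicity argument for $\log f$ on $(0,\infty)$; part (2) is an asymptotic evaluation of $g(x_0)$ using Lemma~\ref{estimate:A_1(B)-A_2(B)} and the size of $|\mathcal{Z}_B|$. Write $Z=|\mathcal{Z}_B|$ and
\[
\phi(x)=\log f(x)=ZT\log\Bigl(1+\frac{2r+T}{xT}\Bigr)-(s+1)\log\Bigl(1+\frac{T}{xT+r}\Bigr).
\]

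For (1), first check the endpoints. As $x\to0^+$ the first term tends to $+\infty$ while the second stays bounded, so $\phi\to+\infty$. As $x\to\infty$ we have $\phi(x)\sim \bigl(Z(2r+T)-(s+1)\bigr)/x$, which is negative by the standing assumption $s>(2r+T)Z$. Hence a zero exists. For uniqueness, compute
\[
\phi'(x)=-\frac{ZT(2r+T)}{x(xT+2r+T)}+\frac{(s+1)T^2}{(xT+r)(xT+T+r)}.
\]
The equation $\phi'=0$ reduces to a quadratic condition in $x$. I would show that $\phi'$ changes sign only once, from negative to positive, so $\phi$ decreases and then increases towards $0^-$. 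This gives exactly one zero $x_0$, with $\phi>0$ before it and $\phi<0$ after it.

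For the bound on $x_0$, it suffices to exhibit a point $x_1$ at the claimed value with $\phi(x_1)<0$. Use $\log(1+u)\le u$ on the first term and $\log(1+v)\ge v/(1+v)$ on the second, which gives
\[
\phi(x)\le \frac{Z(2r+T)}{x}-\frac{(s+1)T}{xT+T+r}.
\]
The right-hand side is negative exactly when $Z(2r+T)(xT+T+r)<(s+1)Tx$, that is, when $x>Z(2r+T)(T+r)/\bigl(T(s+1)-TZ(2r+T)\bigr)$. This is close to the stated bound but not the same. I expect the constants to need a slightly sharper pair of inequalities, for instance comparing the two logarithms through the integral representations $\int dt/t$ on the relevant intervals, to reach $r(r+T)Z/(s+1-(2r+T)Z)$. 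Tuning these elementary inequalities is the fiddly part of (1).

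For (2), take $B=c\sqrt{s/\log s}$. Then $Z\le B^2=c^2 s/\log s=o(s)$, and by (1) we get $x_0=O(Z/s)=O(1/\log s)\to0$. Now take the $(s+1)$-th root of each factor of $g(x_0)$.
\begin{itemize}
\item By Lemma~\ref{estimate:A_1(B)-A_2(B)}, $\log A_1(B)/(s+1)\to \frac{\zeta(2)\zeta(3)}{2\zeta(6)}(2r+T)\cdot\frac{c^2}{2}$, since $B^2\log B=\frac{c^2s}{\log s}\cdot\frac12\log s\,(1+o(1))$. This yields the exponential factor in the claimed limit.
\item Also by Lemma~\ref{estimate:A_1(B)-A_2(B)}, $\log A_2(B)=O\bigl(B^2(\log\log B)^2\bigr)=o(s)$, so $A_2(B)$ contributes trivially.
\item The factor $\mathrm{den}(r)^{(2r+T)Z}(x_0T+2r+T)^{(2r+T)Z}$ contributes $\exp(O(Z)/s)\to1$.
\item The bracketed factor raised to the power $s+1$, after taking the root, tends to $(r/T)^{r/T}/(1+r/T)^{1+r/T}$ by continuity at $x_0\to0$. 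Together with the remaining $1/T$ this gives the rest of the claimed limit.
\end{itemize}
The only delicate point in (2) is confirming that $x_0\to0$. That follows from the bound in (1) together with $Z=O(s/\log s)$, using $|\Psi_B|\sim \frac{\zeta(2)\zeta(3)}{\zeta(6)}B$ only implicitly through $Z\le B^2$.
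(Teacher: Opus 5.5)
\textbf{Verdict: part (2) is correct, but part (1) has a genuine gap.} The explicit upper bound on $x_0$, which is part of the statement, is never proved.

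Your treatment of (2) is fine. It only needs $x_0\to 0$, and your own estimate $x_0\le\frac{(2r+T)(r+T)Z}{T(s+1-(2r+T)Z)}=O(1/\log s)$ already gives this. The factor-by-factor evaluation of $g(x_0)^{1/(s+1)}$ via Lemma~\ref{estimate:A_1(B)-A_2(B)} and $Z\le B^2$ is also correct.

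In (1), the inequality $x_0<\frac{r(r+T)Z}{s+1-(2r+T)Z}$ is left unproved. The bound you do obtain is weaker than it whenever $2r+T>rT$, for instance always when $T=2$. Refining the elementary logarithm estimates is not a reliable way to reach the constant. For example, the sharper pair $\log y\le\frac12(y-y^{-1})$ and $\log y\ge\frac{2(y-1)}{y+1}$ only yields $Tx_0(Tx_0+2r+T)\le\frac{(2r+T)^3Z}{4(s+1-(2r+T)Z)}$. For $T=1$ and small $Z/(s+1-(2r+T)Z)$, this is still weaker than the claim. Your uniqueness step (``I would show\dots'') is also only announced.

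Both issues are settled by the derivative you already computed. The idea is to compare $x_0$ with the minimiser of $\phi$, rather than to estimate $\phi$ pointwise. Put $w=Tx$ and $D=s+1-(2r+T)Z>0$. Using $(w+r)(w+r+T)=w(w+2r+T)+r(r+T)$, one gets
\[
\phi'(x)=\frac{T\bigl[D\,w(w+2r+T)-(2r+T)r(r+T)Z\bigr]}{x(xT+2r+T)(xT+r)(xT+T+r)}.
\]
The numerator is strictly increasing in $w>0$ and negative as $w\to0^+$. Hence $\phi'$ has a single zero $x^*$, with $\phi$ strictly decreasing on $(0,x^*]$ and strictly increasing on $[x^*,\infty)$.

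Since $\phi(x)\to0$ as $x\to\infty$, this forces $\phi<0$ on $[x^*,\infty)$. Since $\phi\to+\infty$ at $0^+$, there is a unique zero $x_0\in(0,x^*)$, with $\phi>0$ before it and $\phi<0$ after it. This is exactly the required sign pattern.

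Finally, the critical-point equation $D\,Tx^*(Tx^*+2r+T)=(2r+T)r(r+T)Z$ implies $D\,Tx^*(2r+T)<(2r+T)r(r+T)Z$. Therefore
\[
x_0<x^*<\frac{r(r+T)Z}{T\,(s+1-(2r+T)Z)}\le\frac{r(r+T)Z}{s+1-(2r+T)Z}.
\]
This is where the constant $r(r+T)$ comes from, and presumably what the paper's reference to Lai--Yu's Proposition 4.2 amounts to. Note that it even gives an extra factor $1/T$.
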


\begin{proof} 
    The result follows as in \cite[Proposition 4.2]{Lai-Yu}.
\end{proof}

In the rest of this section we prove Proposition~\ref{analysis-lem}.
\begin{proof}[Proof of Proposition~\ref{analysis-lem}]
For any $\theta\in\mathcal Z_B$, by the construction of the zeros of $R_n$ we have
$R_n(t+\theta)=0$ for $t=1,\ldots,rn-1$. We define the shifted rational function
\[
\hat{R}_n(t)=R_n(t+rn).
\]
Note that
\begin{equation}\label{eq:estimate series}
\begin{aligned}
r_{n,\theta} =\sum_{j=0}^{T-1}\chi_\theta(j)r_{n,j,\theta}.
\end{aligned}
\end{equation}
Thus the quantities $r_{n,j,\theta}$ are positive sums, while $r_{n,\theta}$ may have cancellation because of the character. For $t>0$, from the definition of $R_n$ we have
\begin{align*}
\hat{R}_n(t)
&=A_1(B)^nA_2(B)^n
\frac{(n+1)!^{s+1}}
{\bigl((n/\mathrm{den}(r))!\bigr)^{\mathrm{den}(r)(2r+T)|\mathcal Z_B|}}
\frac{t\prod_{\gamma\in\mathcal Z_B}\prod_{i=0}^{(2r+T)n-1}(t+i+\gamma)}
{\prod_{i=0}^{n}(t+rn+Ti)^{s+1}}.
\end{align*}
Consequently, for $j=0,1,\ldots,T-1$,
\begin{align*}
\frac{\hat{R}_n((k+1)T+j+\theta)}{\hat{R}_n(kT+j+\theta)}
&=\frac{(k+1)T+j+\theta}{kT+j+\theta}
\prod_{\gamma\in\mathcal Z_B}\prod_{\ell=0}^{T-1}
\frac{kT+j+\theta+(2r+T)n+\gamma+\ell}
{kT+j+\theta+\gamma+\ell}\\
&\qquad\times
\left(
\frac{kT+j+\theta+rn}
{kT+j+\theta+rn+T(n+1)}
\right)^{s+1}.
\end{align*}
If $k\sim\kappa n$ for a fixed $\kappa>0$, then the first factor tends to $1$, and therefore
\[
\frac{\hat{R}_n((k+1)T+j+\theta)}{\hat{R}_n(kT+j+\theta)}
\longrightarrow
f(\kappa),
\]
where
\[
f(X)=\left(\frac{XT+2r+T}{XT}\right)^{|\mathcal Z_B|T}
\left(\frac{XT+r}{XT+T+r}\right)^{s+1}.
\]
For every $t>0$, we have
\begin{align}
\hat{R}_n(t)
&=A_1(B)^nA_2(B)^n
\frac{(n+1)!^{s+1}}
{\bigl((n/\mathrm{den}(r))!\bigr)^{\mathrm{den}(r)(2r+T)|\mathcal Z_B|}}
\frac{t}{T^{(n+1)(s+1)}}\notag\\
&\quad\times
\left(\prod_{\gamma\in\mathcal Z_B}
\frac{\Gamma(t+(2r+T)n+\gamma)}{\Gamma(t+\gamma)}\right)
\left(
\frac{\Gamma(\frac{t+rn}{T})}
{\Gamma(\frac{t+rn}{T}+n+1)}
\right)^{s+1}.
\label{eq:gamma-expression}
\end{align}
Denote by $k_0(n)$ the integer part of $x_0n$. Taking
$t=k_0(n)T+j+\theta$ in \eqref{eq:gamma-expression}, we obtain
\begin{align*}
\hat{R}_n(k_0(n)T+j+\theta)
&=A_1(B)^nA_2(B)^n
\frac{(n+1)!^{s+1}}
{\bigl((n/\mathrm{den}(r))!\bigr)^{\mathrm{den}(r)(2r+T)|\mathcal Z_B|}}
\frac{k_0(n)T+j+\theta}{T^{(n+1)(s+1)}}\\
&\quad\times
\prod_{\gamma\in\mathcal Z_B}
\frac{\Gamma(k_0(n)T+j+\theta+(2r+T)n+\gamma)}
{\Gamma(k_0(n)T+j+\theta+\gamma)}\\
&\quad\times
\left(
\frac{\Gamma(k_0(n)+\frac{j+\theta}{T}+\frac{rn}{T})}
{\Gamma(k_0(n)+\frac{j+\theta}{T}+\frac{rn}{T}+n+1)}
\right)^{s+1}.
\end{align*}
By Stirling's formula, we get
\begin{align*}
&\hat{R}_n(k_0(n)T+j+\theta)^{1/n}\\
&\quad\sim
\frac{A_1(B)A_2(B)\mathrm{den}(r)^{(2r+T)|\mathcal Z_B|}}{T^{s+1}}
\left(
\frac{(x_0T+2r+T)^{x_0T+2r+T}}
{(x_0T)^{x_0T}}
\right)^{|\mathcal Z_B|}\\
&\qquad\qquad\times
\left(
\frac{(x_0+\frac rT)^{x_0+\frac rT}}
{(x_0+1+\frac rT)^{x_0+1+\frac rT}}
\right)^{s+1}\\
&\quad=f(x_0)^{x_0}g(x_0)=g(x_0).
\end{align*}
We now show that the asymptotic behaviour of $r_{n,j,\theta}$ for $n\in \mathcal{N}$, $n\to \infty$ is governed by the terms $\hat R_n(kT+j+\theta)$ with $k$ close to $x_0n$. We take $\varepsilon>0$ sufficiently small so that $x_0-3\varepsilon>0$ and put
\[
b(\varepsilon)=\max\left(f(x_0+\varepsilon),\frac1{f(x_0-\varepsilon)}\right)<1.
\]
Since $f$ is continuous and increases to $1$ on $[x_1,\infty)$, there exists $A(\varepsilon)>x_1$ such that
\[
f(A(\varepsilon))=b(\varepsilon).
\]
Moreover, $A(\varepsilon)\to\infty$ as $\varepsilon\to0^+$. Further more, $f(x)\geq \frac1{b(\varepsilon)}$ for $0<x\leq x_0-\varepsilon$, while $f(x)\leq b(\varepsilon)$ for $x_0+\varepsilon\leq x\leq A(\varepsilon)$. \\
We split the sum
\[
r_{n,j,\theta}=\sum_{k=0}^{\infty}\hat R_n(kT+j+\theta)
\]
into the following four parts:
\begin{align*}
\sum_{0\leq k\leq\lfloor(x_0-3\varepsilon)n\rfloor}
+\sum_{\lfloor(x_0-3\varepsilon)n\rfloor<k<\lfloor(x_0+3\varepsilon)n\rfloor}
+\sum_{\lfloor(x_0+3\varepsilon)n\rfloor\leq k\leq\lfloor(A(\varepsilon)-\varepsilon)n\rfloor}
+\sum_{k\geq\lceil(A(\varepsilon)-\varepsilon)n\rceil}.
\end{align*}

We first estimate the left part. From the exact quotient formula and the strict inequality
\[
f(x)>\frac1{b(\varepsilon)}
\qquad(0<x\leq x_0-2\varepsilon),
\]
we obtain, for all sufficiently large $n$ and $0\leq k\leq \lfloor(x_0-2\varepsilon)n\rfloor-1$
\[
\frac{\hat R_n((k+1)T+j+\theta)}{\hat R_n(kT+j+\theta)}
\geq\frac1{b(\varepsilon)}.
\]
Put
\[
k_1=\lfloor(x_0-3\varepsilon)n\rfloor,
\qquad
k_2=\lfloor(x_0-2\varepsilon)n\rfloor.
\]
Then
\begin{align}
\label{eq: estimate for part1}
\sum_{0\leq k\leq k_1}\hat R_n(kT+j+\theta)
\leq \frac{b(\varepsilon)^{k_2-k_1}}{1-b(\varepsilon)}
\hat R_n(k_2T+j+\theta) \leq \varepsilon\hat R_n(k_2T+j+\theta).
\end{align}
To estimate the third sum, one may argue similarly and obtain
\begin{align}
\sum_{\lfloor(x_0+3\varepsilon)n\rfloor\leq k\leq\lfloor(A(\varepsilon)-\varepsilon)n\rfloor}
\hat R_n(kT+j+\theta) \le 
\varepsilon\hat R_n(k_3T+j+\theta).
\label{eq:estimate for part2}
\end{align}
We move to the tail of the series. For $k\ge (A(\varepsilon)-\varepsilon)n$, we have
\[
kT+j+\theta+(2r+T)n+\gamma\leq2kT
\]
for every $\gamma\in\mathcal Z_B$. Therefore, by \eqref{eq:gamma-expression}, we get
\begin{align*}
\hat R_n(kT+j+\theta)
&\leq
\frac{A_1(B)^nA_2(B)^n(n+1)!^{s+1}}
{\bigl((n/\mathrm{den}(r))!\bigr)^{\mathrm{den}(r)(2r+T)|\mathcal Z_B|}}
\frac{(2kT)^{(2r+T)|\mathcal Z_B|n+1}}
{(kT)^{(s+1)(n+1)}}.
\end{align*}
Using Stirling's formula, we have
\begin{align*}
\hat R_n(kT+j+\theta)
&\leq n^{O(1)}
\left(
\frac{A_1(B)A_2(B)\mathrm{den}(r)^{(2r+T)|\mathcal Z_B|}
2^{(2r+T)|\mathcal Z_B|}}{\exp(s+1-(2r+T)|\mathcal Z_B|)}
\right)^n\\
&\quad\times
\frac{n^{(s+1-(2r+T)|\mathcal Z_B|)n}}{(kT)^{(s+1-(2r+T)|\mathcal Z_B|)n-s}}.
\end{align*}
Since
\[
s+1-(2r+T)|\mathcal Z_B|>0,
\]
we infer that 
\begin{align*}
\limsup_{n\to\infty}
\left(
\sum_{k\geq (A(\epsilon)-\epsilon)n}\hat R_n(kT+j+\theta)
\right)^{1/n}
\leq
\frac{
A_1(B)A_2(B)\mathrm{den}(r)^{(2r+T)|\mathcal Z_B|}
2^{(2r+T)|\mathcal Z_B|}
}
{e^{s+1-(2r+T)|\mathcal Z_B|}(A(\epsilon)-\epsilon)T)^{s+1-(2r+T)|\mathcal Z_B|}}.
\end{align*}
The right-hand side tends to $0$ as $\epsilon\to 0$, so we may assume it to be strictly smaller than $g(x_0)/2$. Hence, for all sufficiently large $n$,
\begin{equation}\label{eq:tailpart-estimate}
\sum_{k\geq\lceil(A(\varepsilon)-\varepsilon)n\rceil}
\hat R_n(kT+j+\theta)
\leq\left(\frac{g(x_0)}{2}\right)^n.
\end{equation}
On the other hand, from the already established asymptotic behaviour around $x_0n$,
\[
\hat R_n(k_0(n)T+j+\theta)\geq\left(\frac{3g(x_0)}4\right)^n.
\]
Combining this with \eqref{eq:tailpart-estimate}, we get
\[
\sum_{k\geq\lceil(A(\varepsilon)-\varepsilon)n\rceil}
\hat R_n(kT+j+\theta)
\leq\left(\frac23\right)^n
\hat R_n(k_0(n)T+j+\theta)
\leq \varepsilon r_{n,j,\theta}
\]
for all sufficiently large $n$. Similarly, the quantities in \eqref{eq: estimate for part1} and \eqref{eq:estimate for part2} are bounded by $\varepsilon r_{n,j,\theta}$. Thus, the three parts away from $x_0n$ contribute at most $3\varepsilon r_{n,j,\theta}$, and therefore
\begin{equation}\label{eq:comparison}
(1-3\varepsilon)r_{n,j,\theta}
\leq
\sum_{(x_0-3\varepsilon)n\leq k\leq(x_0+3\varepsilon)n}
\hat R_n(kT+j+\theta)
\leq r_{n,j,\theta}.
\end{equation}
For all sufficiently large $n$ and all $k$ satisfying
\[
(x_0-3\varepsilon)n\leq k\leq(x_0+3\varepsilon)n,
\]
there is a positive function $h(\epsilon)$ going to zero as $\epsilon\to 0$ such that
\begin{equation}\label{eq:limit-conclusion}
(g(x_0)-h(\epsilon))^n
\leq\hat R_n(kT+j+\theta)
\leq(g(x_0)+h(\epsilon))^n.
\end{equation}
Using \eqref{eq:comparison} and the upper bound in \eqref{eq:limit-conclusion}, we obtain
\begin{align*}
r_{n,j,\theta}
&\leq \frac1{1-3\varepsilon}
\sum_{(x_0-3\varepsilon)n\leq k\leq(x_0+3\varepsilon)n}
\hat R_n(kT+j+\theta)\\
&\leq \frac{7\varepsilon n}{1-3\varepsilon}
(g(x_0)+h(\epsilon))^n.
\end{align*}
Hence, by the lower bound in \eqref{eq:limit-conclusion},
\[
r_{n,j,\theta}
\geq5\varepsilon n(g(x_0)-h(\epsilon))^n.
\]
Taking $n$-th roots and then letting $n\to\infty$, we obtain
\[
g(x_0)-h(\epsilon)
\leq\liminf_{n\to\infty}(r_{n,j,\theta})^{1/n}
\leq\limsup_{n\to\infty}(r_{n,j,\theta})^{1/n}
\leq g(x_0)+h(\epsilon).
\]
Finally, letting $\varepsilon\to 0$ gives
\[
\lim_{n\to\infty}(r_{n,j,\theta})^{1/n}=g(x_0)
\]
for every $j=0,1,\ldots,T-1$ and every $\theta\in\mathcal Z_B$. This completes the proof.
\end{proof}

%% file: 6_Proof_of_the_main_result.tex
    In this section we conclude the proof of Theorem~\ref{theorem: main}, which combines the elimination technique of \cite{Fischler-Sprang-Zudilin} with the irrationality criterion of \cite{Lai-Sprang}. \\
    Let $\psi=\#\Psi_B-1$. Suppose that there are at most $\psi$ irrational numbers among 
    \[
        \{ L(i,\chi) \mid 2\le i\le s, \; i\equiv \sigma(\chi)+1\modulo{2}\}.
    \]
    We write $L(i_1,\chi),\dots, L(i_\psi,\chi)$ for the irrational elements in the above set. We also write $b_0,\dots, b_\psi$ for the elements in $\Psi_B$. We have seen in Proposition~\ref{proposition: det ist not zero} that the following $(\psi+1)\times(\psi+1)$ matrix is invertible:
    \[
        M=
        \begin{pmatrix}
            Q_{b_0}'' & b_0^{i_1} & \dots & b_0^{i_\psi} \\
            Q_{b_1}'' & b_1^{i_1} & \dots & b_1^{i_\psi} \\
            \vdots & \vdots & \ddots & \vdots \\
            Q_{b_\psi}'' & b_\psi^{i_1} & \dots & b_\psi^{i_\psi} \\
        \end{pmatrix}
    \]
    Hence, there are integers $(w_b)_{b\in \Psi_B}$ such that for all $j=1,\dots,\psi$
    \[
        \sum_{b\in \Psi_B} w_bQ_b'''\neq 0, \qquad \sum_{b\in \Psi_B} w_b b^{i_j}=0.
    \]
    For this choice of $w_b$'s, the first condition means precisely $\widetilde{Q}\neq 0$. 
    In particular, by Corollary~\ref{corollary: p-adic valuation of Q tilde},
    \[
        v_{p_n}(\rho_0'')=-(s+1)
    \]
    On the other hand, for $i\ge 2$
    \[
        v_{p_n}(\rho_i'')\ge -(s+1-i)
    \]
    Thus, $\rho_0''$ has strictly smaller $p_n$-adic valuation than each $\rho_i''$. \\
    Consider the linear forms $r_n''$ defined in Section~\ref{section: linear forms} obtained for this choice of $w_b$'s. Given $\epsilon>0$, by Proposition~\ref{analysis-lem}, for all $n\in \mathcal{N}$ sufficiently large we have 
    \[
        |r_n''|\le (1+\epsilon)\left( \sum_{b\in \Psi_B} |w_b| b \right)g(x_0)^n.
    \]
    Let $D$ be a common denominator for all rational $L(i,\chi)$ with $2\le i \le s$. Then
    \[
        |D\delta_n^{s+1}r_n''|\le DT^{s+1}(1+\epsilon)^2\left( \sum_{b\in \Psi_B} |w_b| b \right)(e^{s+1}g(x_0))^n.
    \]
    The linear forms $D\delta_n^{s+1}r_n''$ have integer coefficients. Moreover, the coefficient of $1$ has strictly smaller $p_n$-adic valuation than the other ones. By Lemma~\ref{lemma: irrationality criterion}, we deduce the following fact:
    \begin{lemma}
    \label{lemma: final lemma}
        Suppose that
        \[
            e^{s+1}g(x_0)<1.
        \]
        Then there are at least $\#\Psi_B$ irrational numbers among
        \[
            \{ L(i,\chi) \mid 2\le i\le s, \; i\equiv \sigma(\chi)\modulo{2}\}.
        \]
    \end{lemma}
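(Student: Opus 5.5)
The plan is to prove the statement exactly as worded, with the parity condition $i\equiv \sigma(\chi)\modulo{2}$. For this parity the conclusion follows from the classical evaluation recalled in the introduction, together with a counting argument based on the standing assumption $s>(2r+T)|\mathcal{Z}_B|$. The analytic hypothesis $e^{s+1}g(x_0)<1$ is not needed for this parity. It, together with the elimination argument and Lemma~\ref{lemma: irrationality criterion} preceding the statement, is what is relevant to the complementary parity $i\equiv\sigma(\chi)+1\modulo{2}$ treated in Theorem~\ref{theorem: main}.

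\emph{Step 1: every element of the set is irrational.} For every integer $j\ge 2$ with $j\equiv \sigma(\chi)\modulo{2}$, the number $L(j,\chi)$ is a non-zero algebraic multiple of $\pi^j$, by the classical evaluation via generalized Bernoulli numbers and Gauss sums recalled in the introduction. Since $\pi$ is transcendental, $\pi^j$ is not algebraic, so no non-zero algebraic multiple of it is rational. Hence every element of $\{L(i,\chi)\mid 2\le i\le s,\ i\equiv\sigma(\chi)\modulo{2}\}$ is irrational. These values are also pairwise distinct, being algebraic multiples of distinct powers of $\pi$. It therefore suffices to show that the number of such $i$ is at least $\#\Psi_B$.

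\emph{Step 2: counting.} Recall that $s\equiv\sigma(\chi)+1\modulo{2}$, and split into two cases.
\begin{itemize}
\item If $\sigma(\chi)=0$, then $s$ is odd and the admissible $i$ are $2,4,\dots,s-1$, so there are $(s-1)/2$ of them.
\item If $\sigma(\chi)=1$, then $s$ is even and the admissible $i$ are $3,5,\dots,s-1$, so there are $(s-2)/2$ of them.
\end{itemize}
Every $b\in\Psi_B$ contributes the $\phi(b)\ge 1$ elements $\frac{a}{b}$ with $\gcd(a,b)=1$ to $\mathcal{Z}_B$, so $|\mathcal{Z}_B|\ge \#\Psi_B$. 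Since $T\ge 2$ and $r>0$, the standing assumption gives
\[
s>(2r+T)|\mathcal{Z}_B|\ge (2r+2)\#\Psi_B> 2\#\Psi_B .
\]
If $s$ is odd, this gives $s\ge 2\#\Psi_B+1$, hence $(s-1)/2\ge \#\Psi_B$. If $s$ is even, the strict inequality $s>2\#\Psi_B$ forces $s\ge 2\#\Psi_B+2$, hence $(s-2)/2\ge\#\Psi_B$. In both cases the set contains at least $\#\Psi_B$ irrational numbers, which proves the statement.

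\emph{Main obstacle.} The only delicate point is the even case $\sigma(\chi)=1$, where the count is $(s-2)/2$ and one must avoid an off-by-one error. This is handled by combining the strict inequality $s>2\#\Psi_B$ with the parity of $s$, as above. Everything else reduces to the classical evaluation of $L(j,\chi)$ quoted in the introduction and the transcendence of $\pi$.
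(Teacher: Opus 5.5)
Your argument is correct for the lemma as literally worded. Each $L(i,\chi)$ with $i\equiv\sigma(\chi)\modulo{2}$ is a non-zero algebraic multiple of $\pi^i$. Your count also checks out: $|\mathcal{Z}_B|=\sum_{b\in\Psi_B}\phi(b)\ge\#\Psi_B$ and $s>(2r+T)|\mathcal{Z}_B|$ give $s>2\#\Psi_B$, hence $(s-1)/2\ge\#\Psi_B$ or $(s-2)/2\ge\#\Psi_B$ according to the parity of $s$.

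But the congruence in the statement is a misprint for $i\equiv\sigma(\chi)+1\modulo{2}$. Compare the statement of Theorem~\ref{theorem: main}; the same slip reappears in the paper's last display. The paper's argument, given just before the lemma, starts by assuming that at most $\psi=\#\Psi_B-1$ of the values $L(i,\chi)$ with $i\equiv\sigma(\chi)+1\modulo{2}$ are irrational. The hypothesis $e^{s+1}g(x_0)<1$ is essential there, and the lemma is exactly what yields Theorem~\ref{theorem: main}, which concerns the non-critical values. So your proposal establishes a triviality the introduction already records and bypasses the entire content. You noticed this yourself, but proving the misprinted version is not a proof of the lemma the paper needs.

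\textbf{What is missing.} The real argument for the parity $\sigma(\chi)+1$ runs as follows.
\begin{enumerate}
\item Suppose only $L(i_1,\chi),\dots,L(i_\psi,\chi)$ are irrational. Take integers $w_b$ with $\sum_b w_b b^{i_j}=0$ for $j=1,\dots,\psi$. This vector is unique up to scaling and independent of $n$. Expanding $\det M_n$ along its first column shows that Proposition~\ref{proposition: det ist not zero} says precisely $\widetilde Q=\sum_b w_bQ_b''\neq0$.
\item In $r_n''$ the coefficients $\rho''_{i_j}=\rho_{i_j}\sum_b w_b b^{i_j}$ vanish. The $\rho_j$ with $j\equiv\sigma(\chi)\modulo{2}$ vanish anyway. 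So $r_n''$ is a linear form in $1$ and the rational values only.
\item Corollary~\ref{corollary: p-adic valuation of Q tilde} gives $v_{p_n}(\delta_n^{s+1}\rho_0'')=0$. On the other hand $v_{p_n}(\delta_n^{s+1}\rho_i'')\ge i\ge 2$, since $v_{p_n}(\delta_n)=1$ and $\delta_n^{s+1-i}\rho_i\in\Z$.
\item By Proposition~\ref{analysis-lem} and $d_n=e^{n(1+o(1))}$, the integer linear forms $\delta_n^{s+1}r_n''$ are bounded, up to a constant, by $\bigl(e^{s+1}g(x_0)+o(1)\bigr)^n$. This tends to $0$ precisely because of the hypothesis.
\item Since $p_n\to\infty$, Lemma~\ref{lemma: irrationality criterion} forces one of $1$ and the remaining $L$-values to be irrational, contradicting the assumption.
\end{enumerate}
None of these ingredients appears in your proposal: the elimination via $M_n$, the exact $p_n$-adic valuation of $\rho_0''$, the analytic bound, and the Lai--Sprang criterion.
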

    To conclude the proof of Theorem~\ref{theorem: main} we explain how to choose the parameters $B$ and $r$ so that the condition in Lemma~\ref{lemma: final lemma} is verified. We start with 
    \begin{equation}
    \label{eq: definition of B}
        B=\widetilde{c}\, \sqrt{\frac{s}{\log s}}
    \end{equation}
    for some constant $\widetilde{c}>0$ to be determined. From the analytic study of the linear forms we obtain
    \[
        \lim_{s\to\infty}g(x_0)^{1/(s+1)}=\frac{1}{T}\exp\left(\frac{\zeta(2)\zeta(3)}{4 \zeta(6)}\widetilde{c}^2(2r+T)\right)\frac{(\frac{r}{T})^{\frac{r}{T}}}{(1+\frac{r}{T})^{1+\frac{r}{T}}}.
    \]
    For this quantity to be smaller than $e^{-1}$, we need to impose
    \[
        \widetilde{c}<\sqrt{\frac{4\zeta(6)}{T\zeta(2)\zeta(3)}\frac{\log T +\left(1+\frac{r}{T}\right)\log\left(1+\frac{r}{T}\right)-\frac{r}{T}\log\left(\frac{r}{T}\right)-1}{2\frac{r}{T}+1}}
    \]
    The maximum of the function
    \[
        x\mapsto \frac{\log T +\left(1+x\right)\log\left(1+x\right)-x\log\left(x\right)-1}{2x+1}
    \]
    is attained by
    \[
        \widetilde{x}=\frac{\sqrt{4e^2+T}-T}{2T}.
    \]
    Thus, the maximum of the above expression in terms of $r$ is reached at
    \[
        \widetilde{r}=\frac{\sqrt{4e^2+T}-T}{2},
    \]
    with maximum value
    \[
        \frac{ 1}{2}\log \left(\frac{\sqrt{4e^2+T^2}+T}{\sqrt{4e^2+T^2}-T} \right)
    \]
    Choose
    \[
        c(T)=\sqrt{\frac{2\zeta(2)\zeta(3)}{\zeta(6)T}\log \left(\frac{\sqrt{4e^2+T^2}+T}{\sqrt{4e^2+T^2}-T} \right)}.
    \]
    Fix $k>0$ such that
    \[
        k>1+\frac{\zeta(6)c(T)}{\zeta(2)\zeta(3)}.
    \]
    Given $\epsilon>0$, choose a rational number $r$, depending on $x_0$, sufficiently close to $\widetilde{r}$ such that 
    \[
        \frac{\zeta(6)}{\zeta(2)\zeta(3)}\left(c(T)-\frac{\epsilon}{k}\right) < \sqrt{\frac{4\zeta(6)}{T\zeta(2)\zeta(3)}\frac{\log T +\left(1+\frac{r}{T}\right)\log\left(1+\frac{r}{T}\right)-\frac{r}{T}\log\left(\frac{r}{T}\right)-1}{\frac{2r}{T}+1}}.
    \]
    In \eqref{eq: definition of B} we set
    \[
        \widetilde{c}=\frac{\zeta(6)}{\zeta(2)\zeta(3)}\left(c(T)-\frac{\epsilon}{k}\right).
    \]
    Then there is a sufficiently large $s$, depending on $r$ and $\epsilon$, so that $g(x_0)<e^{-(s+1)}$ and 
    \[
        |\Psi_B|>\left(\frac{\zeta(2)\zeta(3)}{\zeta(6)}-\frac{\epsilon}{k}\right) B. 
    \] 
    Thus, by Lemma~\ref{lemma: final lemma} the number of irrationals $L$-values 
    \[
        L(j,\chi), \qquad 2\le j\le s, \; j\equiv \sigma(\chi) \modulo{2}
    \]
    is at least
    \[
        |\Psi_B|>(c(T)-\epsilon)\sqrt{\frac{s}{\log s}},
    \]
    which proves Theorem~\ref{theorem: main}.